\begin{document}

\title{Orbifold Biquotients of $SU(3)$}
\author{Dmytro Yeroshkin}
\address{University of Pennsylvania, Department of Mathematics, Philadelphia, Pa 19104, USA}
\email{dmytroy@math.upenn.edu}

\begin{abstract}
One of the main methods of constructing new spaces with positive or almost positive curvature is the study of biquotients first studied in detail by Eschenburg. In this paper we classify orbifold biquotients of the Lie Group $SU(3)$, and construct a new example of a 5-dimensional orbifold with almost positive curvature. Furthermore, we extend the work of Florit and Ziller on the geometric properties of the orbifolds $SU(3)//T^2$.
\end{abstract}
\maketitle

\section{Introduction}

One of the fundamental questions in Riemannian geometry is what spaces admit metrics with positive sectional curvature. The only known compact simply connected manifolds with positive sectional curvature are the compact rank one symmetric spaces, and some examples in dimension below 25, most given as quotients of Lie groups with non-negative curvature (see \cite{ZExam} for an overview). One can also ask similar questions with slightly weaker conditions, such as almost-positive curvature, i.e. spaces with non-negative curvature and positive curvature almost everywhere; or spaces with quasi-positive curvature, i.e. non-negative curvature and at least one point with all planes having positive curvature.

Recall that biquotients have the form $K\backslash G/H$ where $H,K$ are subgroups of $G$. Alternatively, these quotients can be written as $G//U$ where $U\subset G\times G$ acts on both the left and the right. Starting with the work of Gromoll and Meyer (\cite{GMS7}) and Eschenburg (\cite{ENewEx}, \cite{EHab}), biquotients have proven to be a plentiful source of new examples of manifolds with positive, almost-positive and quasi-positive curvature (see for example \cite{WApos}, \cite{TQPC}, \cite{EKSph}, \cite{KThes}, \cite{DVThes} and \cite{KAPC}). While the focus has been primarily on manifolds, some work has also been done on orbifolds (see \cite{FZOrbi} and \cite{KThes}). Even in the study of manifolds, orbifolds can be used as a tool, for example, the positively curved cohomogeneity one manifold $P_2$ in \cite{GVZPosC} and \cite{DP2}, admits a bundle structure over an orbifold, as do the remaining cohomogeneity one positive curvature candidates, see \cite{GWZCoho}.

In his thesis, DeVito classified low dimensional smooth biquotients up to diffeomorphism \cite{DVThes}. Since the first interesting biquotients of positive curvature given by Eschenburg are of the form $SU(3)//S^1$, it is natural to search for further interesting biquotients of $SU(3)$. Furthermore, $SU(3)$ is the Lie group of lowest dimension whose orbifold biquotients have not been studied in detail (see \cite{KThes} and \cite{KCurv} for orbifold biquotients of $S^3\times S^3$). We point out that they have a natural metric of non-negative sectional curvature induced by the bi-invariant metric on $SU(3)$.

\begin{mthm}\label{ThmClass}
The following is a complete list of orbifold biquotients $SU(3)//U$ with $U$ connected:

\begin{enumerate}
\item Homogeneous spaces:

\subitem Classical manifolds $S^5=SU(3)/SU(2)$, and $\CP^2=SU(3)/U(2)$

\subitem Wallach spaces $W^7_{p,q}=SU(3)/S^1_{p,q}$, $W^6=SU(3)/T^2$

\subitem The Wu manifold $SU(3)/SO(3)$

\item Generalized Eschenburg spaces and orbifolds $SU(3)//S^1$ and $SU(3)//T^2$

\item Weighted projective spaces of the form $SU(3)//U(2)$ and $SU(3)//(SU(2)\times S^1)$

\item Circle quotients of the Wu manifold $S^1_{p,q}\backslash SU(3)/SO(3)$

\item One orbifold of the form $SU(3)//SU(2)$.
\end{enumerate}
\end{mthm}

The homogeneous spaces are well-known, and the positively curved ones fully classified (see \cite{WHomPos},\cite{BBHomPos}). The seven dimensional Eschenburg spaces were introduced in \cite{EHab}, and the six dimensional orbifolds in \cite{FZOrbi}. Recall that weighted projective spaces which appear in many contexts, see e.g. \cite{DWPV}, and are of interest to both algebraic and differential geometers, are defined as $\CP^2[\lambda_0,\lambda_1,\lambda_2] = S^5/S^1$ where the $S^1$-action is given by
\[
w\star(z_0,z_1,z_2) = (w^{\lambda_0} z_0, w^{\lambda_1} z_1, w^{\lambda_2} z_2),
\]
where $\lambda_i\in\bbZ\setminus\{0\}$, $\gcd(\lambda_0,\lambda_1,\lambda_2)=1$ and $(z_0,z_1,z_2)$ are coordinates on $\bbC^3\supset S^5$. We examine weighted projective spaces more closely in section \ref{SecPfClass}, in particular, we will show that every weighted $\CP^2$ can be obtained as a biquotient of $SU(3)$. The orbifold structure of circle quotients of the Wu manifold will be discussed in Section \ref{SecNew}. They give rise to an interesting class of 4-dimensional biquotients with non-negative curvature.

Of special interest is the new example $SU(3)//SU(2)$. It is obtained by embedding $\phi:SU(2)\hookrightarrow SU(3)\times SU(3)$ with $\phi(g)=(g,\pi(g))$, where $\pi$ is the 2-fold covering map $\pi:SU(2)\to SO(3)$. Here we are able to improve the natural metric of non-negative sectional curvature as follows:

\begin{mthm}\label{ThmO5}
The orbifold $\calO^5=SU(3)//SU(2)$ admits a metric with almost positive curvature such that

\begin{enumerate}
\item The set of points with 0-curvature planes forms a totally geodesic, flat 2-torus $T$ which is disjoint from the singular locus.

\item The only 0-curvature planes are those tangent to $T$.
\end{enumerate}
\end{mthm}

The singular orbifold locus of $SU(3)//SU(2)$ is a closed geodesic with a $\bbZ_3$ orbifold singularity. A neighborhood of a singular point has boundary homeomorphic to a suspension of a lens space $L(3;1)=S^3/\bbZ_3$. In particular, the underlying topological space is not a manifold.

Another interesting property of this orbifold is that it admits an isometric $S^1$ action, and we will see that

\begin{cor}\label{CorAlex}
The Alexandrov space $X^4=\calO^5/S^1=SU(3)//U(2)$ admits a metric of positive curvature.
\end{cor}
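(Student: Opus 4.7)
The plan is to realize $X^4$ as the base of an orbifold Riemannian submersion $\pi\colon \calO^5 \to X^4$ induced by an isometric $S^1$-action, and then to combine O'Neill's formula with Theorem~\ref{ThmO5}. The $S^1 \subset U(2)$ complementary to $SU(2)$ in $U(2) = SU(2)\cdot S^1$ commutes with the $SU(2)$ defining $\calO^5$ and descends to an isometric action on $\calO^5$, provided the metric produced in Theorem~\ref{ThmO5} is $U(2)$-invariant; I would first verify this from its construction, so that $\calO^5/S^1 = SU(3)//U(2) = X^4$ carries a well-defined Alexandrov quotient metric and $\pi$ is an orbifold Riemannian submersion.

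By O'Neill's formula,
\[
K^{X^4}(\bar\sigma) \;=\; K^{\calO^5}(\sigma) \;+\; \frac{3\,|A_X Y|^2}{|X\wedge Y|^2}
\]
for any horizontal lift $\sigma = \mathrm{span}\{X,Y\}$, which immediately yields non-negative curvature on $X^4$. To promote this to strict positivity, suppose $K^{X^4}(\bar\sigma) = 0$ for some 2-plane $\bar\sigma \subset T_{\bar p}X^4$. Then both $K^{\calO^5}(\sigma) = 0$ and $A|_\sigma = 0$ for its horizontal lift. By Theorem~\ref{ThmO5}(2), the first condition forces $p\in T$ and $\sigma = T_p T$.

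The decisive step is to rule this out. Let $V$ denote the Killing field generating the $S^1$-action. Using the explicit description of $T$ obtained in the proof of Theorem~\ref{ThmO5}, I would show that $V$ is everywhere tangent to $T$, so that $V(p) \in T_pT$ for every $p \in T$. Then the horizontal subspace $V^\perp$ meets $T_pT$ only in a line, so no horizontal 2-plane coincides with $T_pT$, contradicting the assumption. In fact the same conclusion holds whenever $V$ is not orthogonal to $T_pT$; the genuinely dangerous case to exclude is $V \perp T_pT$, because then the total geodesicity of $T$ forces $A|_{T_pT} = 0$ automatically and would produce a flat 2-plane in $X^4$.

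The main obstacle is therefore this geometric compatibility between the $S^1$-action and the zero-curvature torus $T$---checking that $V$ is nowhere orthogonal to $T$. This has to be extracted from the explicit construction underlying Theorem~\ref{ThmO5}. Since $T$ is disjoint from the singular locus, the action near $T$ is smooth and the verification reduces to a local computation involving the maximal torus of $SU(3)$ from which $T$ is built. Once this compatibility is in hand, the above dichotomy gives $K^{X^4}(\bar\sigma) > 0$ for every 2-plane $\bar\sigma$, and the corollary follows.
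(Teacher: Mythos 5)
Your proposal follows the same route as the paper: O'Neill's formula together with the observation that the $S^1$ orbit direction sits inside every zero-curvature plane of $\calO^5$. The verification you leave pending --- that the Killing field $V$ is tangent to $T$ (or, more precisely, lies inside the zero-curvature plane at each point of $T$) --- is not a separate local computation but falls out of the proof of Theorem~\ref{ThmChDef} immediately: there the spanning vector of the flat plane is pinned down to be $A=\diag(i,i,-2i)$, and the base point $g$ was normalized to lie in $U(2)\subset SU(3)$, so that $\Ad(g^{-1})\diag(i,i,-2i)=\diag(i,i,-2i)$. Hence $A$ is exactly the left-translated velocity of the $S^1=\{\diag(z,z,\bar z^2)\}$ orbit through $g$, which at once shows $V$ is nonzero on $T$ and parallel to $A\in T_pT$, disposing of your ``dangerous case'' $V\perp T_pT$.
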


In this paper we also further study the orbifold quotients of Eschenburg spaces in a slightly more general way than done in \cite{FZOrbi}. We also provide some minor corrections and improvements, see Theorem \ref{ThmFix}. Furthermore, we show that a biquotient of the form $SU(3)//T^2$ admits a metric of positive curvature under a standard deformation iff there is some $S^1\subset T^2$ such that $SU(3)//S^1$ admits a metric of positive curvature under the same deformation.

This work is part of the author's Ph.D. thesis. The author would like to thank his advisor, Wolfgang Ziller, for his invaluable advice and encouragements. The author would also like to thank Jason DeVito, Matthew Tai and Martin Kerin for helpful conversations.

\section{Preliminaries}\label{SecPrelim}

Recall that an $n$-dimensional orbifold $\calO^n$ is a space modeled locally on $\bbR^n/\Gamma$ with $\Gamma\subset O(n)$ finite. Given a point $p\in\calO$, the orbifold group at $p$, which we'll denote as $\Gamma_p$ is the subgroup of the group $\Gamma$ in the local chart $\bbR^n/\Gamma$, that fixes a lift of $p$ to $\bbR^n$. Note that different choices of a lift of $p$ result in $\Gamma_p$ being conjugated, and as such, we will think of $\Gamma_p$ up to conjugacy.

Recall that a biquotient $X$ is of the form $X=G//U$ where $G$ is a Lie group and $U\subset G\times G$ acts as $(u_l,u_r)\cdot g = u_l\cdot g\cdot u_r^{-1}$ where $g\in G$ and $(u_l,u_r)\in U$. If $U=K\times H$ with $K,H\subset G$, then we can instead write $X=K\backslash G / H$. In his habilitation, Eschenburg showed that if $G//U$ is a manifold, then $\rk U \leq \rk G$. Since the argument is done on the Lie algebra level, we will see that it also holds when we allow $G//U$ to be an orbifold.

\begin{lem}\label{LemOrbiBi}
Let $\ft_\fu\subset \fu$ be a maximal abelian subalgebra. Then, $G//U$ is an orbifold if and only if for all non-zero $(X_1,X_2)\in\ft_\fu\subset\fu\subset\fg\oplus\fg$ and for all $g\in G$, $X_1 - \Ad(g)X_2 \neq 0$.

If $\pi:G\to G//U$ is the projection, and $g\in G$, then the orbifold group $\Gamma_{\pi(g)}\subset U$ is given by
\[
\Gamma_{\pi(g)} = \left\{
(h,k)\in U \mid hgk^{-1}g^{-1} = e
\right\}.
\]
\end{lem}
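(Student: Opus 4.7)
The plan is to treat this as a standard computation of isotropy for the proper action of the compact Lie group $U$ on $G$ by $(u_l,u_r)\cdot g = u_l g u_r^{-1}$. Since $U$ is compact, the action is proper, so the quotient $G//U$ is an orbifold precisely when every isotropy subgroup is finite, equivalently when every isotropy Lie subalgebra is trivial. Thus the lemma reduces to two ingredients: (i) computing the isotropy algebra at an arbitrary $g\in G$, and (ii) showing that it suffices to test the vanishing condition on $\mathfrak{t}_\mathfrak{u}$ rather than all of $\mathfrak{u}$.

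For the identification of $\Gamma_{\pi(g)}$, the stabilizer of $g$ under the action is simply $\{(h,k)\in U : hgk^{-1}=g\}$, which is exactly the displayed set. Under the properness of the action this coincides with the orbifold isotropy at $\pi(g)$. Differentiating the action at the identity, the fundamental vector field at $g$ associated to $(X_1,X_2)\in\mathfrak{u}\subset \mathfrak{g}\oplus\mathfrak{g}$ is $X_1 g - g X_2$. Right-translating to $T_e G$ yields $\operatorname{Ad}(g^{-1})X_1 - X_2$, whose vanishing is equivalent to $X_1 = \operatorname{Ad}(g) X_2$. So the isotropy algebra at $g$ is $\{(X_1,X_2)\in\mathfrak{u}:X_1-\operatorname{Ad}(g)X_2=0\}$, and $G//U$ is an orbifold iff no nonzero $(X_1,X_2)\in\mathfrak{u}$ satisfies this for some $g\in G$.

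To reduce the condition from $\mathfrak{u}$ to $\mathfrak{t}_\mathfrak{u}$, I use the standard fact that every element of a compact Lie algebra is $\operatorname{Ad}(U)$-conjugate to an element of a chosen maximal abelian subalgebra. Given $(X_1,X_2)\in\mathfrak{u}$, pick $(u_1,u_2)\in U$ with $(\operatorname{Ad}(u_1)X_1,\operatorname{Ad}(u_2)X_2)\in\mathfrak{t}_\mathfrak{u}$. Then $X_1-\operatorname{Ad}(g)X_2=0$ is equivalent to
\[
\operatorname{Ad}(u_1)X_1 - \operatorname{Ad}(u_1 g u_2^{-1})\,\operatorname{Ad}(u_2)X_2 = 0,
\]
so the existence of a bad pair anywhere in $\mathfrak{u}$, for some $g$, is equivalent to the existence of a bad pair in $\mathfrak{t}_\mathfrak{u}$, for some (possibly different) $g'=u_1 g u_2^{-1}$. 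Running $g$ over all of $G$ absorbs this replacement, giving the stated criterion.

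The only subtle point, and the step I would check carefully, is the application of the conjugacy theorem to the subalgebra $\mathfrak{u}\subset\mathfrak{g}\oplus\mathfrak{g}$: one needs that $\mathfrak{t}_\mathfrak{u}$, a maximal abelian subalgebra of $\mathfrak{u}$, is actually a maximal torus algebra in the compact group $U$, so that every $\operatorname{Ad}(U)$-orbit in $\mathfrak{u}$ meets it. This holds because $U$ is compact and connected, but it is worth recording explicitly since $\mathfrak{t}_\mathfrak{u}$ is generally not the intersection of $\mathfrak{u}$ with a maximal torus of $G\times G$. Everything else is bookkeeping.
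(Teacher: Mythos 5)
Your proof is correct and follows essentially the same route as the paper's: identify the orbifold group at $\pi(g)$ with the stabilizer of $g$, compute the isotropy algebra as $\{(X_1,X_2)\in\fu : X_1-\Ad(g)X_2=0\}$, and reduce to $\ft_\fu$ by $\Ad(U)$-conjugacy, absorbing the conjugating element into $g$. You are somewhat more explicit than the paper about the fundamental vector field computation and about the need for $\ft_\fu$ to be a maximal torus algebra of the compact group $U$ (a point the paper leaves implicit), but the structure of the argument is identical.
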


\begin{proof}
Let $M$ be a manifold, $\Gamma$ a Lie group, $\pi:M \to \calO=M/\Gamma$ the projection map. Then, for any $x\in M$, $\Gamma_{\pi(x)} = \Stab(x)$. As a corollary we get the description of $\Gamma_{\pi(g)}$ as desired. Observe that $G//U$ is an orbifold iff the stabilizer of every $g\in G$ is finite. The Lie algebra of $\Gamma_{\pi(g)}$ is $\{(X_1,X_2)\in\fu|X_1-\Ad(g)X_2=0\}$. Since we can conjugate $(X_1,X_2)\in\fu$ into an element of $\ft_{\fu}\subset\fu$, and since the stabilizer groups occur in conjugacy classes, the first claim follows as well.
\end{proof}

\begin{lem}\label{LemRk}
If $G//U$ is an orbifold, then $\rk \fu \leq \rk \fg$.
\end{lem}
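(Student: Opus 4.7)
The plan is to use Lemma~\ref{LemOrbiBi} to produce, for a suitable $g\in G$, an injective linear map $L:\ft_\fu\hookrightarrow\ft_\fg$ into a maximal torus of $\fg$. Once such an $L$ is in hand, the conclusion $\rk\fu = \dim\ft_\fu \leq \dim\ft_\fg = \rk\fg$ follows immediately.

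First I would observe that the two coordinate projections $\pi_i:\fg\oplus\fg\to\fg$ are Lie algebra homomorphisms, so each $\pi_i(\ft_\fu)$ is an abelian subalgebra of $\fg$ and is therefore contained in some maximal torus $\ft_i\subset\fg$. Since $G$ is compact, any two maximal tori are conjugate, so I can pick a single $g\in G$ with $\Ad(g)\ft_2=\ft_1$, and set $\ft_\fg := \ft_1$. For this $g$, the formula
\[
L(X_1,X_2) := X_1 - \Ad(g)X_2
\]
defines a linear map $\ft_\fu \to \ft_\fg$, since $X_1\in\pi_1(\ft_\fu)\subset\ft_\fg$ and $\Ad(g)X_2\in\Ad(g)\ft_2 = \ft_\fg$.

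Finally, Lemma~\ref{LemOrbiBi} applied with this specific $g$ says precisely that $L$ has trivial kernel; hence $L$ is injective and the rank inequality follows. The only step worth flagging in advance is the existence of a \emph{single} group element $g$ simultaneously conjugating $\pi_1(\ft_\fu)$ and $\pi_2(\ft_\fu)$ into one common maximal torus of $\fg$. This is supplied immediately by the conjugacy of maximal tori in the compact Lie group $G$, so I do not anticipate any serious obstacle beyond linear algebra.
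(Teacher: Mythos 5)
Your proof is correct and follows essentially the same route as the paper's: both pick a single $g$ conjugating the two projections of $\ft_\fu$ into a common maximal torus $\ft_\fg$ and then invoke Lemma~\ref{LemOrbiBi} to see that $(X_1,X_2)\mapsto X_1-\Ad(g)X_2$ is injective on $\ft_\fu$. The only cosmetic difference is that you argue directly while the paper phrases the same construction as a contrapositive.
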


\begin{proof}
Suppose $\rk \fu > \rk \fg$, let $\ft_\fu\subset \fu\subset \fg\oplus\fg$ be a maximal torus. Let $\phi_1,\phi_2$ be projections of $\fu$ onto the first and second copy of $\fg$ respectively. Pick a maximal torus $\ft_\fg\subset\fg$ such that $\phi_1(\ft_\fu)\subset \ft_\fg$.

Next, pick $g\in G$ such that $\Ad(g)\phi_2(\ft_\fu)\subset\ft_\fg$. This induces a linear map $\Phi:\ft_\fu\to\ft_\fg$ given by
\[
\Phi(X) = \phi_1(X)-\Ad(g)\phi_2(X).
\]
In particular, if $\rk\fu > \rk\fg$, we conclude that $\ker\Phi\neq\{0\}$. This implies that there exists $X\in\fu$ such that $\phi_1(X)-\Ad(g)\phi_2(X)=0$, which by the previous lemma implies that $G//U$ is not an orbifold.
\end{proof}

\begin{rmk}
For biquotients of $SU(n)$, we have a slightly more general form given by $SU(n)//H$ where $H\subset \{(A,B)\in U(n)\times U(n)\mid \det(A)=\det(B)\}$. Such a group $H$ clearly still acts on $SU(n)$, and we again denote the quotient by $SU(n)//H$.
\end{rmk}

An important tool in section \ref{SecO5} is Cheeger deformations \cite{CDeform}, which is used to improve the curvature on manifolds and orbifolds. To perform a Cheeger deformation along a subgroup $K\subset G$, choose $\lambda>0$, and define $(G,g_\lambda)=G\times_K \lambda K$, where $G$ is equipped with a bi-invariant metric and $\lambda K$ is equipped with the induced metric scaled by $\lambda$. The metric $g_\lambda$ is still left-invariant, but is no longer bi-invariant; however, it is right $K$-invariant. Let $\fk$ be the Lie algebra of $K$, and let $X^\fk$ denote the $\fk$ component of $X\in\fg$. We will use the following result of Eschenburg \cite{EHab}

\begin{lem}[Eschenburg]
If $(G,K)$ is a compact symmetric pair, equip $G$ with the metric induced by the map $G\times\lambda K\to G$, given by $(g,k)\mapsto gk^{-1}$. The metric has non-negative sectional curvature, and $\sec(X,Y)=0$ iff $[X,Y]=[X^\fk,Y^\fk]=0$.
\end{lem}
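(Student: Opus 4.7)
The plan is to realize $g_\lambda$ as the base of a Riemannian submersion and apply O'Neill's formula. The map $F\colon G\times \lambda K \to G$, $F(g,k)=gk^{-1}$, is the orbit map of the free right $K$-action $(g,k)\cdot h = (gh, kh)$, so $(G,g_\lambda)$ is, by construction, the quotient Riemannian metric. Since both factors of $(G\times\lambda K, g_0)$ carry bi-invariant metrics, $g_0$ has $\sec \ge 0$, and O'Neill's formula
\[
\sec_{g_\lambda}(\bar X, \bar Y) \;=\; \sec_{g_0}(\tilde X, \tilde Y) \;+\; \tfrac{3}{4}\,\frac{\bigl|[\tilde X,\tilde Y]^V\bigr|^2}{|\tilde X|^2|\tilde Y|^2 - \langle \tilde X,\tilde Y\rangle^2},
\]
where $\tilde X, \tilde Y$ are horizontal lifts of $\bar X, \bar Y\in\fg$, yields both $\sec_{g_\lambda}\ge 0$ and that a plane has zero curvature iff both summands vanish.

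Next I would compute the horizontal lift explicitly at $(e,e)$. The vertical subspace is the diagonal $\{(Z,Z): Z\in\fk\}$, so orthogonality in the product metric gives the horizontal condition $X^\fk + \lambda W = 0$ on pairs $(X,W)\in\fg\oplus\fk$. Combined with $dF_{(e,e)}(X,W) = X-W$, this uniquely yields
\[
\tilde{\bar X} \;=\; \Bigl(\bar X^{\mathfrak m} + \tfrac{\lambda}{\lambda+1}\bar X^\fk,\; -\tfrac{1}{\lambda+1}\bar X^\fk\Bigr),
\]
where $\mathfrak m$ is the orthogonal complement of $\fk$ in $\fg$ coming from the symmetric pair.

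To identify the zero-curvature locus I would analyze the two summands separately. On a product of bi-invariantly metrized Lie groups, $\sec_{g_0}(\tilde X,\tilde Y)=0$ iff the brackets of the projections vanish in each factor. The $\fk$-factor gives exactly $[\bar X^\fk, \bar Y^\fk]=0$. Expanding the $\fg$-factor bracket and splitting it into $\fk$- and $\mathfrak m$-components via the symmetric-pair relations $[\fk,\fk]\subset\fk$, $[\fk,\mathfrak m]\subset\mathfrak m$, $[\mathfrak m,\mathfrak m]\subset\fk$, and using $[\bar X^\fk,\bar Y^\fk]=0$, reduces to the two further conditions $[\bar X^{\mathfrak m}, \bar Y^{\mathfrak m}]=0$ and $[\bar X^{\mathfrak m},\bar Y^\fk]+[\bar X^\fk,\bar Y^{\mathfrak m}]=0$. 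A direct computation of the vertical projection of $[\tilde X,\tilde Y]$ onto the diagonal, writing it as $Z=(V_1^\fk+\lambda V_2)/(1+\lambda)$ with $V_1,V_2$ the component brackets, produces no new constraint: under the standing hypotheses, the A-tensor vanishes precisely when $[\bar X^{\mathfrak m},\bar Y^{\mathfrak m}]=0$.

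Finally I would collect these three identities and invoke the symmetric-pair decomposition $[\bar X,\bar Y]^\fk = [\bar X^\fk,\bar Y^\fk]+[\bar X^{\mathfrak m},\bar Y^{\mathfrak m}]$ and $[\bar X,\bar Y]^{\mathfrak m}=[\bar X^\fk,\bar Y^{\mathfrak m}]+[\bar X^{\mathfrak m},\bar Y^\fk]$ to see they are jointly equivalent to $[\bar X,\bar Y]=0$ together with $[\bar X^\fk,\bar Y^\fk]=0$, which is the desired characterization. The main obstacle is purely bookkeeping: matching the $K$-action so that its orbit map is $(g,k)\mapsto gk^{-1}$, tracking where the scale $\lambda$ enters the horizontal condition and the vertical projection formula, and correctly using the symmetric-pair bracket relations to split both the bi-invariant bracket of the horizontal lifts and the vertical projection into the two pieces that then recombine into the clean statement.
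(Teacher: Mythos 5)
The paper cites this lemma to Eschenburg's habilitation \cite{EHab} and does not reproduce a proof, so there is no internal argument to compare against; your proposal is a self-contained proof and it is correct. The Cheeger-submersion setup, the horizontal lift $\bigl(\bar X^{\fk^\perp}+\tfrac{\lambda}{\lambda+1}\bar X^\fk,\ -\tfrac{1}{\lambda+1}\bar X^\fk\bigr)$ at the identity, and the reduction via the symmetric-pair bracket relations all check out. One point you should make explicit rather than leave implicit: when you ``directly compute the vertical projection of $[\tilde X,\tilde Y]$,'' you are computing the O'Neill $A$-tensor via the Lie-algebra bracket of the \emph{left-invariant} extensions of $\tilde X,\tilde Y$, which are not horizontal vector fields away from $(e,e)$. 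This is nevertheless legitimate because the $A$-tensor $A_{\tilde X}\tilde Y=(\nabla_{\tilde X}\tilde Y)^V$ is tensorial in its arguments once both are horizontal at the point in question, and for the bi-invariant product metric $\nabla_{X}Y=\tfrac12[X,Y]$ on left-invariant fields; hence $A_{\tilde X}\tilde Y=\tfrac12[\tilde X,\tilde Y]^V$ with the algebraic bracket and the pointwise vertical projection $Z=(V_1^\fk+\lambda V_2)/(1+\lambda)$. With that justification in place your computation closes: one finds $Z=\tfrac{1}{1+\lambda}\bigl([\bar X^{\fk^\perp},\bar Y^{\fk^\perp}]+\tfrac{\lambda}{1+\lambda}[\bar X^\fk,\bar Y^\fk]\bigr)$, which under $[\bar X^\fk,\bar Y^\fk]=0$ vanishes iff $[\bar X^{\fk^\perp},\bar Y^{\fk^\perp}]=0$, a condition already forced by the $\fk$-component of the $\fg$-factor bracket. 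So the $A$-tensor adds no new constraint, and the three surviving identities recombine, exactly as you say, into $[\bar X,\bar Y]=0$ together with $[\bar X^\fk,\bar Y^\fk]=0$.
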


We also remark that for the sake of brevity, we often omit zero entries in matrices when no confusion about the dimension of the matrix can arise.

\section{Proof of Theorem \ref{ThmClass}}\label{SecPfClass}

The first class (the homogeneous spaces) are well known. One simply classifies connected subgroups of $SU(3)$, which up to conjugation are $U(2), T^2, SU(2), SO(3)$ and $S^1_{p,q}$, where $S^1_{p,q}=\diag(z^p,z^q,\bar{z}^{p+q})$ with $p,q\in\bbZ$. We may assume without loss of generality that $p\geq q\geq 0$. Throughout the rest of this section, we assume that $G//U$ is not given by a homogeneous action, in particular, $U$ must act on both sides.

Recall that the subgroups of $SU(3)$ other than $S^1$ are unique up to conjugation. For $SU(2)$ and $U(2)$ we will use the standard upper-left block embeddings $\diag(A,1)$ for $SU(2)$ and $\diag(A,\bar{\det(A)})$ for $U(2)$. For $SO(3)$ we utilize a convenient, although non-standard embedding of $SO(3)$ into $SU(3)$. On the Lie algebra level, we have
\begin{equation}\label{SubAlg}
\so(3) = \left\{\left.
\begin{pmatrix}
ai & 0 & z\\
0 & -ai & -\bar{z}\\
-\bar{z} & z & 0
\end{pmatrix}
\right|
a\in\bbR,z\in\bbC
\right\}
\end{equation}
this embedding of $\so(3)$ is given by conjugating the standard embedding of $SO(3)\subset SU(3)$ and $\so(3)\subset\su(3)$ by
\begin{equation}\label{Conj}
g_0=\begin{pmatrix}
\frac{1}{\sqrt{2}} & \frac{-i}{\sqrt{2}} & 0\\
\frac{1}{\sqrt{2}} & \frac{i}{\sqrt{2}} & 0\\
0 & 0 & -i
\end{pmatrix}.
\end{equation}
The advantage of this embedding is that it has a convenient maximal torus, simplifying some computations. We also note that throughout this paper in the examples which involve $SO(3)$, should the reader desire to utilize the standard $SO(3)\subset SU(3)$, the results stated for $X\in SU(3)$, should now be interpreted as being about $X\cdot g_0$.

From Lemma \ref{LemRk}, we know that if $SU(3)//U$ is an orbifold, then $\rk\fu\leq 2$. In particular, we must have $U=S^1,T^2,SU(2),SO(3),U(2),SU(2)\times S^1$, $SO(3)\times S^1$, a finite quotient of $SU(3), Sp(2)$ or $SU(2)\times SU(2)$, or the exceptional group $G_2$.

The cases $\fu = \su(3),\sp(2),\fg_2$ or $\su(2)\oplus\su(2)$ can be ruled out quickly. Observe $\dim SU(3)//(SU(3)/\Gamma) = 0$, and in particular the action must be homogeneous. Additionally, $\dim SU(3) < \dim (Sp(2)/\Gamma)$, and $\dim SU(3) < \dim G_2$, so there can be no orbifold biquotients of the form $SU(3)//(Sp(2)/\Gamma)$ or $SU(3)//G_2$. Recall that the only embeddings of $A_1\oplus A_1\hookrightarrow \su(3)\times\su(3)$ map an $A_1$ factor into each $\su(3)$ factor. However, the maximal torii of the possible $A_1$ embedding are conjugate, which violates the conditions of Lemma \ref{LemOrbiBi}.

The cases when $U$ is either $S^1$ or $T^2$ yield the  7 and 6 dimensional Eschenburg spaces respectively, and both are covered in section \ref{SecEsch}.

Next, suppose $U=SO(3)$. Since $U$ acts on both sides and there is a unique up to conjugation embedding of $SO(3)$ into $SU(3)$, we must have $U=\Delta SO(3)\subset SU(3)\times SU(3)$. However, this obviously leads to a violation of Lemma \ref{LemOrbiBi}. Therefore, there are no non-homogeneous orbifolds of the form $SU(3)//SO(3)$.

Next, suppose $U=SO(3)\times S^1$. Recall that $SO(3)\times S^1$ is not a subgroup of $SU(3)$. Therefore the $S^1$ and the $SO(3)$ must act on different sides. We get the family of orbifolds $S^1_{p,q}\backslash SU(3)/SO(3)$ whose precise orbifold structure we discuss in section \ref{SecNew}.

Next, we let $U=SU(2)$. By the same argument as for $SO(3)$, we cannot have $U=\Delta SU(2)\subset SU(3)\times SU(3)$. The only remaining non-trivial embedding is if we map $U$ to $SU(2)$ on one side and to $SO(3)$ on the other. We study this embedding on the Lie algebra level. In particular, we use the following bases:
\begin{equation}\label{Bases}
\begin{aligned}
&\su(2):&
I_1 &= \begin{pmatrix}
       i & 0 & 0\\
       0 & -i & 0\\
       0 & 0 & 0
       \end{pmatrix}&
J_1 &= \begin{pmatrix}
       0 & 1 & 0\\
       -1 & 0 & 0\\
       0 & 0 & 0
       \end{pmatrix}&
K_1 &= \begin{pmatrix}
       0 & i & 0\\
       i & 0 & 0\\
       0 & 0 & 0
       \end{pmatrix}\\
&\so(3):\qquad&
I_2 &= \begin{pmatrix}
       2i & 0 & 0\\
       0 & -2i & 0\\
       0 & 0 & 0
       \end{pmatrix}&
J_2 &= \begin{pmatrix}
       0 & 0 & \sqrt{2}\\
       0 & 0 & -\sqrt{2}\\
       -\sqrt{2} & \sqrt{2} & 0
       \end{pmatrix}&
K_2 &= \begin{pmatrix}
       0 & 0 & i\sqrt{2}\\
       0 & 0 & i\sqrt{2}\\
       i\sqrt{2} & i\sqrt{2} & 0
       \end{pmatrix}
\end{aligned}
\end{equation}
Under these bases we have: $[I_n,J_n]=2K_n,[J_n,K_n]=2I_n,[K_n,I_n]=2J_n$. We define the embedding $\phi:\su(2)\hookrightarrow\su(3)\oplus\su(3)$ as $\phi(I)=(I_1,I_2)$, $\phi(J)=(J_1,J_2)$ and $\phi(K)=(K_1,K_2)$. Where $I,J,K$ is the standard quaternionic basis for $\su(2)=\sp(1)=\Im\,\bbH$. To verify that the resulting biquotient is an orbifold choose $\{tI|t\in\bbR\}$ as a maximal torus of $\su(2)$, then $\phi_1(tI)=tI_1,\phi_2(tI)=tI_2$. The condition we need to verify is that $tI_1-\Ad(g)tI_2=0$ iff $t=0$, but $I_2=2I_1$, so we have $tI_1 = 2t\Ad(g)I_1$, so $I_1=2\Ad(g)I_1$ if $t\neq 0$, but conjugation preserves the norm, so we must have $t=0$. Therefore the resulting biquotient is an orbifold of dimension 5, which we denote by $\calO^5$. For the sake of convenience, we use $SU(2)_\phi\subset SU(3)\times SU(3)$ to denote this embedding of $SU(2)$, and we write $SU(3)//SU(2)_{\phi}$ for the resulting biquotient. We study the orbifold structure of $SU(3)//SU(2)_\phi$ in section \ref{SecNew} and its metric properties in section \ref{SecO5}.

The last two cases, $U=SU(2)\times S^1$ and $U=U(2)=(SU(2)\times S^1)/\bbZ_2$, we consider jointly. The first observation is that in both cases $SU(2)\subset U$. In particular, $SU(3)//SU(2)$ must be an orbifold, where $SU(2)\subset U$. Therefore, $SU(2)$ either acts on only one side, or on both by the above $\phi:SU(2)\to SU(3)\times SU(3)$. Suppose it is the latter, then there is only one choice of $S^1$ which commutes with $SU(2)_\phi$, namely $\diag(z,z,\bar{z}^2)$ acting on the left. We will now show that this does not result in an orbifold.

Consider $\diag(i,i,-2i)$ in the tangent space of the $S^1$ component, and using the bases in (\ref{Bases}), $(I_1,I_2)$ in the tangent space of the $SU(2)$ component. For the sum, we have $\diag(2i,0,-2i)$ on the left, and $\diag(2i,-2i,0)$ on the right. These two elements of $\su(3)$ are clearly conjugates, therefore, $SU(3)//(SU(2)_\phi\times S^1)$ is not an orbifold by Lemma \ref{LemOrbiBi}.

Finally, we consider the case where $U=SU(2)\times S^1$ or $U=U(2)$ and $SU(2)$ acts only on one side (we choose the right for convenience). We claim that in this case $SU(3)//U$ is a weighted projective space. Recall that a weighted projective space is defined as $\CP^2[\lambda_0,\lambda_1,\lambda_2] = S^5/S^1$ where the $S^1$-action is given by
\begin{equation}\label{WCP}
w\star(z_0,z_1,z_2) = (w^{\lambda_0} z_0, w^{\lambda_1} z_1, w^{\lambda_2} z_2),
\end{equation}
where $\lambda_i\in\bbZ\setminus\{0\}$, $\gcd(\lambda_0,\lambda_1,\lambda_2)=1$ and $(z_0,z_1,z_2)$ are coordinates on $\bbC^3\supset S^5$. For $SU(3)/SU(2)$, the possible $S^1$ biquotient actions are parametrized by $p,q,r\in\bbZ$, $\gcd(p,q,r)=1$, none of $p+q,p+r,q+r$ zero, and are induced by the following action on $SU(3)$:
\[
z\star X = \diag(z^p,z^q,z^r) X \diag(1,1,z^{p+q+r})^{-1},\qquad X\in SU(3).
\]
Recall that under our chosen representation of $SU(2)\subset SU(3)$, we have a well-behaved projection map $\pi:SU(3)\to SU(3)/SU(2)=S^5\subset \bbC^3$. Namely,
\[
\pi\begin{pmatrix}
x_{11} & x_{12} & x_{13}\\
x_{21} & x_{22} & x_{23}\\
x_{31} & x_{32} & x_{33}
\end{pmatrix} = (x_{13}, x_{23}, x_{33}) \in \bbC^3.
\]
Under this identification, the above action becomes
\[
z\star(x_{13},x_{23},x_{33}) = (\bar{z}^{q+r}x_{13},\bar{z}^{p+r}x_{23},\bar{z}^{p+q}x_{33})
\]
and hence the quotient is the weighted projective space $\CP^2[-q-r,-p-r,-p-q]\cong\CP^2[q+r,p+r,p+q]$. A note of caution is that this representation need not be in lowest terms, and for proper representation as a weighted $\CP^2$, we need to divide all three weights by their greatest common divisor and normalize the signs to be positive. In general, proper choices of $p,q,r$ allow us to obtain any weighted projective space. As a second note, it does not matter whether the action of $S^1\times SU(2)$ is effective, so this also covers the case of $SU(3)//U(2)$, which corresponds to the case when $p,q,r$ are all odd.

\section{Orbifold Structure of the New Examples}\label{SecNew}

The singular locus of the generalized Eschenburg spaces will be studied in section \ref{SecEsch}. We will now study the singular locus for the remaining two cases, and start with $\calO^5=SU(3)//SU(2)_\phi$.

\begin{prop}
$\calO^5=SU(3)//SU(2)_\phi$ as defined above has a closed geodesic as its singular locus, and each point on the singular locus has an order 3 orbifold group.
\end{prop}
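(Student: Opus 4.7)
The plan is to use Lemma~\ref{LemOrbiBi} to show that the only finite subgroup of $SU(2)$ that can arise as an orbifold isotropy group is $\bbZ_3$, and then to exhibit an explicit closed geodesic that sweeps out the singular locus. By Lemma~\ref{LemOrbiBi}, the orbifold group at $[g]$ is $\Gamma_{[g]} = \{u\in SU(2) : ug = g\pi(u)\}$, a finite subgroup of $SU(2)$. Since $I_2$ is diagonal with entries $\pm 2i$, we have $\pi(-I) = \exp(\pi I_2) = e$, so the stabilizer equation for $u=-I$ reduces to $-g = g$, which has no solution. Thus $\Gamma_{[g]}$ does not contain $-I$, and consequently it must be cyclic of odd order, this being the only class of finite subgroups of $SU(2)$ that omits $-I$. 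Taking a generator in the maximal torus of the form $u_n = \diag(\zeta,\bar\zeta,1)$ with $\zeta = e^{2\pi i/n}$, one has $\pi(u_n) = \diag(\zeta^2,\bar\zeta^2,1)$; the spectra $\{\zeta,\bar\zeta,1\}$ and $\{\zeta^2,\bar\zeta^2,1\}$ coincide nontrivially only when $\zeta^3 = 1$, forcing $n=3$.

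Fix $\omega = e^{2\pi i/3}$, $u_0 = \diag(\omega,\bar\omega,1)$, and let $F = \{g\in SU(3) : g^{-1}u_0 g = \pi(u_0)\}$. Since any $g\in F$ must swap the $\omega$- and $\bar\omega$-eigenspaces of $u_0$, a direct computation shows
\[
F = \left\{\diag(\lambda_1,\lambda_2,\lambda_3)\cdot h_* : |\lambda_i|=1,\ \lambda_1\lambda_2\lambda_3 = -1\right\},\qquad h_* = \begin{pmatrix} 0 & 1 & 0 \\ 1 & 0 & 0 \\ 0 & 0 & -1 \end{pmatrix},
\]
which is a $2$-torus. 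Taking $X = \diag(i,i,-2i)\in\su(3)$, one verifies that $X$ commutes with both $u_0$ and $h_*$, so $g(t) = \exp(tX)\cdot h_*$ remains in $F$ for all $t$ and is a bi-invariant geodesic in $SU(3)$. The key step is showing the velocity $\dot g(t)\,g(t)^{-1} = X$ is horizontal for the $SU(2)_\phi$-action, which by $\Ad(g(t))X = X$ reduces to the trace identities $\mathrm{tr}(XY_1) = \mathrm{tr}(XY_2) = 0$ for each $Y\in\{I,J,K\}$; both follow by inspection from the bases in (\ref{Bases}), using that $X$ restricts to a scalar matrix on the upper-left $2\times 2$ block and is orthogonal to the non-standard $\so(3)$. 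Hence $g(t)$ descends to a closed geodesic in $\calO^5$ contained in the singular locus.

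To conclude, I would identify $F/N_{SU(2)}(\bbZ_3)$ with the singular locus and check that the curve $g(t)$ surjects onto it. The normalizer equals $N = T^1\cup J\cdot T^1$, with $T^1\subset SU(2)$ the maximal torus containing $\bbZ_3$; its identity component acts on $F$ by rotating $(\lambda_1,\lambda_2)$ while fixing $\lambda_3$, with kernel exactly $\bbZ_3$, while $J$ sends $\lambda_3\mapsto-\lambda_3$ (computed from the explicit form of $\pi(J)$ obtained via the $g_0$-conjugation defining the non-standard $\so(3)$). Thus $F/N\cong S^1$ parametrized by $\lambda_3\in S^1/\bbZ_2$, and along $g(t)$ one has $\lambda_3 = -e^{-2it}$, which is surjective. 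By the first step, the orbifold group at every point of the singular locus is $\bbZ_3$. I expect the main obstacle to be the horizontality verification and the explicit computation of $\pi(J)$, both of which hinge on careful bookkeeping in the non-standard embedding $\so(3)\hookrightarrow\su(3)$.
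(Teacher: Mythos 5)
Your argument is essentially sound and reaches the same conclusions as the paper, but the route differs in two places worth noting, and there are a few minor sign slips.

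For the isotropy classification, you observe that $-I\notin\Gamma_{[g]}$ (since $\pi(-I)=e$), so $\Gamma_{[g]}$ is a finite subgroup of $SU(2)$ omitting $-I$ and hence cyclic of odd order, and then the spectra comparison forces order $3$. The paper instead first shows via the spectra comparison that every nontrivial element of the stabilizer has order $3$, so $|\Gamma|=3^n$, and then uses that a $p$-group has nontrivial center, that order-$3$ elements of $SU(2)$ are conjugate, and that the centralizer of such an element is a maximal torus, to conclude $\Gamma=\bbZ_3$. Your route is shorter and avoids the $p$-group digression. One small slip here: with the upper-left block embedding, $-I\in SU(2)$ is $\diag(-1,-1,1)\in SU(3)$, so the stabilizer equation is $\diag(-1,-1,1)g=g$, not $-g=g$; it still has no solution, so the conclusion stands.

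For the identification of the singular locus, both you and the paper arrive at the anti-diagonal torus (the paper's $g=\bigl(\begin{smallmatrix}&u&\\v&&\\&&w\end{smallmatrix}\bigr)$ with $uvw=-1$, your $F$). You then quotient $F$ by the normalizer $N_{SU(2)}(\bbZ_3)=T^1\cup JT^1$, noting the $T^1$-action has kernel $\bbZ_3$ and moves only $(\lambda_1,\lambda_2)$, while $J$ acts as $\lambda_3\mapsto-\lambda_3$; the paper reduces by the circle action to $g_z$ and checks $\pi(g_z)=\pi(g_w)$ iff $w=\pm z$. These are the same computation in different coordinates. Two small sign corrections: since $\det h_*=1$, the constraint on $F$ is $\lambda_1\lambda_2\lambda_3=1$ (not $-1$); and along $g(t)=\exp(tX)h_*$ the $(3,3)$-entry is $-e^{-2it}$, so in your parametrization $\lambda_3=e^{-2it}$ (not $-e^{-2it}$). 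Neither affects the conclusion that the quotient is a circle.

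Finally, you explicitly exhibit the bi-invariant geodesic $g(t)=\exp(tX)h_*$ and verify it is horizontal using $\Ad(g(t))X=X$ (which holds since $X$ commutes with both $\exp(tX)$ and $h_*$) together with $X\perp\fh$ and $X\perp\fk$. This actually makes the ``closed geodesic'' claim explicit, which the paper's proof of this particular proposition leaves implicit (it only establishes the set is a circle). So your write-up is a genuine improvement on this point, once the sign bookkeeping is cleaned up.
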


\begin{proof}
Let $\pi:SU(3)\to \calO$ denote the projection map. Let $\phi_1,\phi_2$ be the projections from $\fu$ onto $\su(2)$ and $\so(3)$ respectively, and $\psi_1,\psi_2$ projections from $U=SU(2)_\phi\subset SU(3)\times SU(3)$ onto $SU(2)$ and $SO(3)$ respectively. Let $g\in SU(3)$ and $h\in U$ be an element other than identity.

Suppose that $\psi_1(h)\cdot g\cdot \psi_2(h)^{-1} = g$ (i.e. $g$ has non-trivial stabilizer). Since stabilizer groups occur in conjugacy classes, we can assume that $h$ lies in the maximal torus $h=e^{tI} = (e^{t I_1},e^{tI_2})$ of $SU(2)_\phi$. Since $\psi_i(e^X)=e^{\phi_i(X)}$, the condition reduces to
\[
g^{-1} \begin{pmatrix}
e^{ti} &&\\
&e^{-ti}&\\
&&1
\end{pmatrix} g = \begin{pmatrix}
e^{2ti} &&\\
&e^{-2ti}&\\
&&1
\end{pmatrix},
\]
Since a conjugation can only permute eigenvalues, we see that either $e^{ti}=e^{2ti}$ or $e^{ti}=e^{-2ti}$. The first case is degenerate, since it implies that $e^{ti}=1$, and so $h$ is identity. In the second case we get that $e^{ti}$ is a third root of unity and
\[
g=\begin{pmatrix}
&u&\\
v&&\\
&&w
\end{pmatrix},\qquad uvw=-1
\]

We now show that $\pi(g)$ lies in a single circle for $g$ as above. Re-write
\[
g = \begin{pmatrix}
&e^{\lambda i} &\\
-e^{-(\lambda+\mu)i}&&\\
&&e^{\mu i}
\end{pmatrix}
\]
and act on $g$ by $e^{\lambda/3 I}\in U$.
\begin{align*}
g\to
\begin{pmatrix}
e^{-\lambda i/3} &&\\
&e^{\lambda i/3}&\\
&&1
\end{pmatrix}&
\begin{pmatrix}
&e^{\lambda i}&\\
-e^{-(\lambda+\mu)i}&&\\
&&e^{\mu i}
\end{pmatrix}
\begin{pmatrix}
e^{2\lambda i/3} &&\\
&e^{-2\lambda i/3}&\\
&&1
\end{pmatrix}\\
&= \begin{pmatrix}
&1&\\
-e^{-\mu i}&&\\
&&e^{\mu i}
\end{pmatrix}
\end{align*}
Define
\[
g_z =
\begin{pmatrix}
&1&\\
-\bar{z}&&\\
&&z
\end{pmatrix}.
\]

So far, we have shown that each singular orbit contains an element of the form $g_z$. Computations show that $\pi(g_z)=\pi(g_w)$ iff $w=\pm z$. Therefore, we conclude that the image of the singular orbits under $\pi$ forms a circle. The above also shows that each element in the stabilizer of $g_z$ has order 3, and hence $|\Gamma_{\pi(g_z)}|=3^n$.

Recall that every group of order $p^n$ where $p$ is prime has a non-trivial center. Additionally, all elements of order 3 in $SU(2)$ are conjugate to each other. Therefore, the stabilizer is abelian. Furthermore, given an element of order 3 inside $SU(2)$, it commutes only with the elements in the same maximal torus. Therefore, the orbifold group is precisely $\bbZ_3\subset SU(2)$.
\end{proof}

\begin{rmk}
We recall that there is a unique smooth 3-dimensional lens space of the form $S^3/\bbZ^3 = L(3;1) = L(3;2)$. Therefore, it is the space of directions normal to the singular locus.
\end{rmk}

Next we examine the singular locus of the quotients of the Wu manifold.

\begin{prop}\label{PropStructWu}
The quotient $\calO_{p,q}=S^1_{p,q}\backslash SU(3)/SO(3)$ is an orbifold iff $p\geq q > 0$. Furthermore, the action is effective iff $(p,q)=1$. Its singular locus consists of a singular $\RP^2$ with orbifold group $\bbZ_2$, with possibly one point on it with a larger orbifold group, and up to two other isolated singular points. The orbifold group at the singular points are $\bbZ_p,\bbZ_q$, and $\bbZ_{p+q}$.
\end{prop}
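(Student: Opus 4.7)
My plan throughout is to apply Lemma \ref{LemOrbiBi}, which reduces every question to the following: given $h = \diag(z^p, z^q, z^{-(p+q)}) \in S^1_{p,q}$, when does there exist $g \in SU(3)$ with $g^{-1}hg \in SO(3)$? Since elements of $SO(3)$ (in any embedding into $SU(3)$) have eigenvalues $\{1, e^{i\theta}, e^{-i\theta}\}$, the answer is: exactly when one of $z^p, z^q, z^{p+q}$ equals $1$, because then the relation $z^p z^q z^{-(p+q)} = 1$ forces the remaining two to be mutually conjugate and thereby match the $SO(3)$ pattern automatically.

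For the first two assertions I would infinitesimalize this criterion. The Lie-algebra form of Lemma \ref{LemOrbiBi} demands that no nonzero $(X_1, X_2)\in\ft_\fu$ satisfies $X_1 = \Ad(g)X_2$; here $X_1$ and $X_2$ are diagonal with entries proportional to $(p, q, -(p+q))$ and $(1, -1, 0)$, so matching the two eigenvalue multisets up to permutation forces one of $p, q, p+q$ to vanish. Combined with the normalization $p\geq q\geq 0$, this gives precisely $p\geq q>0$. For effectiveness, $z$ acts trivially on $W=SU(3)/SO(3)$ iff $\diag(z^p, z^q, z^{-(p+q)}) \in \bigcap_g g SO(3) g^{-1}$; but this intersection, being the largest normal subgroup of $SU(3)$ contained in $SO(3)$, is just $\{e\}$. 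So the kernel of the action is $\{z : z^p = z^q = 1\} = \bbZ_{\gcd(p,q)}$, trivial iff $\gcd(p,q)=1$.

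Assume henceforth $\gcd(p,q)=1$. The three conditions $z^p=1$, $z^q=1$, $z^{p+q}=1$ cut out cyclic subgroups $\bbZ_p, \bbZ_q, \bbZ_{p+q}\subset S^1_{p,q}$ that pairwise intersect only at the identity. Exactly one of $p, q, p+q$ is even, so the unique order-$2$ element $z=-1$ lies in precisely one of these. To describe the fixed sets on $W$ I would identify $W$ with the symmetric unitary determinant-$1$ matrices in $SU(3)$ via $gSO(3)\mapsto gg^T$; the action of $h$ then becomes $A\mapsto hAh$, so the fixed set is cut out by the vanishing of those entries $A_{ij}$ for which $z^{p_i+p_j}\neq 1$ (with $p_1=p$, $p_2=q$, $p_3=-(p+q)$). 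For a generator of $\bbZ_p, \bbZ_q,$ or $\bbZ_{p+q}$ of order greater than $2$, only two entries survive, producing a circle whose $S^1$-orbit is a single point carrying the corresponding cyclic orbifold group.

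The main step, and I expect the principal obstacle, is showing that the $\bbZ_2$-fixed set descends to a singular $\RP^2$. The fixed set of the order-$2$ element $h_0$ is three-dimensional and, up to a permutation of coordinates, consists of $A=\diag(a,A')$ with $A'\in U(2)$ symmetric and $a\det A'=1$. Writing $A'=wB'$ with $B'\in SU(2)$ symmetric (so $B'\in S^2$), the sign ambiguity $(w,B')\sim(-w,-B')$ realizes this fixed set as a nontrivial $S^2$-bundle over $S^1$. The residual $S^1_{p,q}$-action translates $w$ by a character and rotates $B'$ inside its $S^2$; after gauge-fixing $w$ and combining with the sign ambiguity, one obtains an antipodal $\bbZ_2$-action on the $S^2$-fiber, whose quotient is $\RP^2$. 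Finally, since the order-$2$ subgroup is contained in whichever of $\bbZ_p, \bbZ_q, \bbZ_{p+q}$ has even order, exactly one of the three isolated fixed points lands on the $\RP^2$ (the ``one point of larger orbifold group''), while the other two are isolated off of it and disappear from the singular locus precisely when the corresponding index equals $1$.
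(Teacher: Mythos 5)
Your approach is correct in outline, and it uses a genuinely different computational model than the paper: you pass to the symmetric-space realization $W=SU(3)/SO(3)\cong\{A\in SU(3):A=A^T\}$ via $gSO(3)\mapsto gg^T$, so that the circle acts by $A\mapsto hAh$ and all fixed loci are read off from the characters $z^{p_i+p_j}$ on the entries $A_{ij}$. The paper instead stays inside $SU(3)$, identifies the preimage of the $\bbZ_2$-singular stratum with $U(2)\subset SU(3)$, and quotients it by the explicit subgroup $K=S^1_{p,q}\times(SO(3)\cap U(2))$ using the Hopf map $SU(2)\to S^2$. Both routes reach the same place; yours is slightly slicker for identifying which entries survive, while the paper's makes the residual group action on $S^2$ completely explicit, which matters for the orbifold weights. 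Your treatment of the orbifold criterion ($q>0$), of effectiveness (trivial normal core of $SO(3)$ in $SU(3)$), and of the three circles fixed by generators of $\bbZ_p,\bbZ_q,\bbZ_{p+q}$ are all fine.

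The one place you need to be more careful is the sentence ``after gauge-fixing $w$ and combining with the sign ambiguity, one obtains an antipodal $\bbZ_2$-action on the $S^2$-fiber.'' After you set $w=1$, the residual symmetry is not just $\bbZ_2$: from the $\bbZ_2$ cover $S^1\times S^2\to U(2)_{\mathrm{sym}}$, the stabilizer of the level $\{w=\pm1\}$ is $\{\zeta:\zeta^{p+q}=\pm1\}\cong\bbZ_{2(p+q)}$, acting on $B'\in S^2$ by rotation by $\zeta^{p-q}$ about the $b$-axis together with $B'\mapsto -B'$ when $\zeta^{p+q}=-1$. The kernel is the $\bbZ_2=\{\pm1\}$ that fixes all of $U(2)$ (this is what gives the generic orbifold group $\bbZ_2$ on the $\RP^2$, which you never explicitly verify), and the effective quotient group is $\bbZ_{p+q}$, not $\bbZ_2$; its rotation part $\bbZ_{(p+q)/2}$ (using $\gcd(p-q,p+q)=2$) is exactly what produces the distinguished point with orbifold group $\bbZ_{p+q}$. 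You recover that point a different way — via the inclusion of fixed sets $\mathrm{Fix}(\bbZ_{p+q})\subset\mathrm{Fix}(\bbZ_2)$ — so your final conclusion is right, but as written the middle step suppresses the part of the residual action that actually carries the nontrivial orbifold data along the $\RP^2$. Spelling out the effective $\bbZ_{p+q}$ action on the fiber (or computing the stabilizer of a generic $A\in U(2)$ directly, as the paper does) would close this.
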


\begin{proof}
To find the singular locus we need to see when $\diag(z^p,z^q,\bar{z}^{p+q})$ is conjugate to an element of $SO(3)$. Without loss of generality, we only need to check when it is conjugate to something in the maximal torus $T\subset SO(3)$. With our choice of $SO(3)$, the most convenient maximal torus has the form $\diag(w,\bar{w},1)$

The conjugacy class is determined by the eigenvalues, and hence $\diag(z^p,z^q,\bar{z}^{p+q})\in S^1_{p,q}$ is conjugate to $\diag(w,\bar{w},1)\in SO(3)$ iff $z^p,z^q$, or $\bar{z}^{p+q}$ is equal to 1. In particular, each of the three choices yields an orbifold group of order $p,q$, and $p+q$ respectively.

Let $g\in SU(3)$ be a preimage of an orbifold point w.r.t. the action of $S^1_{p,q}$, i.e. $g\cdot \diag(z^p,z^q,\bar{z}^{p+q})\cdot g^{-1}=\diag(w,\bar{w},1)$. We first consider the case where $w^2\neq 1$. In this case all three eigenvalues are distinct, and so the two diagonal matrices are related by a permutation matrix, i.e. $g=g_i$ as defined below.

\begin{align*}
g_1&=\begin{pmatrix}
1 &&\\
&1&\\
&&1
\end{pmatrix}&
g_4&=\begin{pmatrix}
&-1&\\
-1&&\\
&&-1
\end{pmatrix}\\
g_2&=\begin{pmatrix}
&&1\\
1&&\\
&1&
\end{pmatrix}&
g_5&=\begin{pmatrix}
&&-1\\
&-1&\\
-1&&
\end{pmatrix}\\
g_3&=\begin{pmatrix}
&1&\\
&&1\\
1&&
\end{pmatrix}&
g_6&=\begin{pmatrix}
-1&&\\
&&-1\\
&-1&
\end{pmatrix}
\end{align*}
We note that given $g_i$, every element of the form $\diag(\rho,\eta,\zeta)X_i$ lies in the same orbit as $g_i$. For example, for $g_1$, let $\bar{z}$ be a $(p+q)^{th}$ root of $\zeta$, and $w=\rho\bar{z}^p$. Then, $\diag(z^p,z^q,\bar{z}^{p+q})\in S^1$, $\diag(w,\bar{w},1)\in SO(3)$, and $\diag(z^p,z^q,\bar{z}^{p+q})\diag(w,\bar{w},1)=\diag(\rho,\eta,\zeta)$.

Recall that we are using a non-standard $SO(3)\subset SU(3)$, and as such $g_1,g_4\in SO(3)$, since, $g_4 = g_0\cdot\diag(-1,1,-1)\cdot g_0^{-1}$, where $g_0$ is as in (\ref{Conj}). This implies that $g_5 = g_2 g_4\in g_2 SO(3),g_6 = g_3 g_4\in g_3 SO(3)$. Corresponding to three (possibly) isolated singular points.

Now, suppose that $w^2=1$ and $w\neq 1$, i.e. $w=-1$. This implies that 2 of $z^p,z^q,\bar{z}^{p+q}$ are -1, and the third is 1. Without loss of generality, assume that $z^p=z^q=-1,\bar{z}^{p+q}=1$. This implies that either $(p,q)>1$ or $z=-1$. The former contradicts the assumption that $S^1$ acts effectively, and hence $z=-1$. Next note that exactly one of $p,q,p+q$ is even. in what follows, we assume that $p+q$ is the even exponent.

We now have $\diag(-1,-1,1)\cdot h\cdot \diag(-1,-1,1) = h$, so $h$ commutes with $\diag(-1,-1,1)$ and so
\[
h \in U(2) = \left\{\left.\begin{pmatrix}
A &\\
& \bar{\det(A)}
\end{pmatrix}\right| A\in U(2)\right\}\subset SU(3).
\]
To determine $\pi(U(2))$, we need to find the subgroup $K\subset S^1_{p,q}\times SO(3)$ which preserves $U(2)$. Since $S^1\subset U(2)$, we must have $K=S^1_{p,q}\times(SO(3)\cap U(2))$. The intersection is
\[
SO(3)\cap U(2)=
\left\{\begin{pmatrix}
w & &\\
& \bar{w} &\\
& & 1
\end{pmatrix}\right\}\cup
\left\{\begin{pmatrix}
& z &\\
\bar{z} &  &\\
& & -1
\end{pmatrix}\right\}.
\]
Indeed, for $g_0$ as in (\ref{Conj}), we have $g_0\in U(2)$, and hence $SO(3)\cap U(2) = g_0(SO(3)_{std}\cap U(2))g_0^{-1} = g_0 O(2)g_0^{-1}$, where $O(2) = \{\diag(A,\det(A))|A\in O(2)\}\subset SU(3)$.

Hence, $K$ is a disjoint union of two copies of $T^2$. Identifying $U(2)\subset SU(3)$ with the upper $2\times 2$ block, we can rewrite the action of $K$ on $U(2) = \left\{\left.A = \begin{pmatrix}
a & b\\
c & d
\end{pmatrix}\right| A\in U(2)\right\}$. Restricting to the identity component of $K$, we get
\[
(z,w) \star A = \begin{pmatrix}
z^p &\\
& z^q
\end{pmatrix}
\begin{pmatrix}
a & b\\
c & d
\end{pmatrix}
\begin{pmatrix}
w &\\
& \bar{w}
\end{pmatrix}^{-1}.
\]
Using an appropriate element $z\in S^1_{p,q}$, we can assume that $\det(A)=1$, and hence $z^{p+q}=1$. Thus the quotient of this action is the same as $SU(2)//(\bbZ_{p+q}\times S^1)$, given by
\[
(z,w) \star \begin{pmatrix}
a & -\bar{b}\\
b & \bar{a}
\end{pmatrix} =
\begin{pmatrix}
z^p &\\
& z^q
\end{pmatrix}
\begin{pmatrix}
a & -\bar{b}\\
b & \bar{a}
\end{pmatrix}
\begin{pmatrix}
w & \\
& \bar{w}
\end{pmatrix}^{-1}
\]
where $z^{p+q}=1$ and $|a|^2+|b|^2=1$. Identifying $SU(2)$ with $S^3\subset \bbC^2$ via $\begin{pmatrix}
a & -\bar{b}\\
b & -a
\end{pmatrix}\mapsto(a,b)$ the $S^1$ action by $w$ becomes $w\cdot (a,b)\to(aw,bw)$. This is the Hopf action, and hence $S^3/S^1=S^2$ with projection $S^3\subset\bbC^2\to S^2=\bbC\cup\{\infty\}$ given by $(a,b)\mapsto ab^{-1}$. The action by $z$ then becomes $z\cdot(a,b)=(z^pa,z^qb)=(z^pa,\bar{z}^pb)$, since $z^{p+q}=1$. This induces an action on $S^2$ given by $ab^{-1}\mapsto z^{2p}(ab^{-1})$. Notice that $z=-1$ acts trivially corresponding to the fact that $\bbZ_2$ fixed $U(2)$. Thus we have rotation by $2\pi p / (p+q)$, since $z$ runs over the $(p+q)^{th}$ roots of unity.

Finally, we must consider the second component of $K$, which can be considered as the action on $U(2)$ by $\begin{pmatrix}
& 1\\
-1 &
\end{pmatrix}$ on the right. On $S^3$, this action corresponds to $(a,b)\mapsto (\bar{b},-\bar{a})$, and on $S^2 = \bbC\cup\{\infty\}$ we get $x\mapsto -1/\bar{x}$, which is precisely the antipodal map. Thus, $\pi(U(2))$ is a (possibly singular) $\RP^2$, and the image of 0 and $\infty$ is the only orbifold point with orbifold group $\bbZ_{|p+q|}$.
\end{proof}

\begin{rmk}
We note that the singular $\RP^2$ above has a distinguished point, which has a larger orbifold group unless the even integer among $p,q,p+q$ is equal to $\pm 2$.
\end{rmk}

\section{$SU(3)//SU(2)_\phi$}\label{SecO5}

In this section we study the curvature of the orbifold $SU(3)//SU(2)_\phi$, and prove Theorem \ref{ThmO5}.

The most natural metric on $\calO = SU(3)//SU(2)_\phi$ is induced by the bi-invariant metric on $SU(3)$. Using this metric, we get

\begin{prop}
The orbifold $\calO^5=SU(3)//SU(2)_\phi$, equipped with the metric induced by the bi-invariant metric on $SU(3)$, has quasi-positive curvature.
\end{prop}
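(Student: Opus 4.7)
The plan is to first establish non-negativity globally, and then locate a single positively-curved point.

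\textbf{Non-negativity.} The bi-invariant metric on $SU(3)$ satisfies $\sec(X,Y) = \tfrac{1}{4}|[X,Y]|^2$ for left-translated tangent vectors, and $\pi: SU(3) \to \calO^5$ is a Riemannian orbifold submersion. Hence O'Neill's formula gives, for horizontal lifts $X, Y \in \su(3) \cong T_g SU(3)$,
\[
\sec_{\calO}(\pi_*X,\pi_*Y) = \tfrac{1}{4}|[X,Y]|^2 + \tfrac{3}{4}|[X,Y]^V|^2 \geq 0,
\]
with equality exactly when $[X,Y]=0$ in $\su(3)$.

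\textbf{Strategy.} It then suffices to exhibit one $g_0 \in SU(3)$ for which the $5$-dimensional horizontal space $H_{g_0}$ contains no $2$-dimensional abelian subspace. Since $\rk\,\su(3) = 2$, any such abelian plane is automatically a Cartan subalgebra, hence $\Ad$-conjugate to the standard diagonal $\ft$. By Lemma~\ref{LemOrbiBi}, the vertical space at $g$ is spanned by the three vectors $a_i(g) = \Ad(g^{-1})\phi_1(X_i) - \phi_2(X_i)$ for $X_i \in \{I,J,K\}$, and a Cartan $\Ad(k)\ft$ lies in $H_{g_0}$ precisely when $\Ad(k^{-1}) V_{g_0} \subset \ft^\perp$, which amounts to six real equations on $k \in SU(3)$.

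\textbf{The key observation at $g_0 = e$.} Using $I_2 = 2I_1$, one has $V_e = \operatorname{span}(-I_1,\,J_1-J_2,\,K_1-K_2)$. A direct trace check shows that $e_2 := \diag(i,i,-2i) \in \ft$ lies in $H_e$: its orthogonality with each generator is immediate since $J_1-J_2$ and $K_1-K_2$ have vanishing diagonal. The centralizer of $e_2$ in $\su(3)$ is the block $\mathfrak{u}(2)\oplus\mathfrak{u}(1)$; parametrizing it explicitly and imposing the three orthogonality conditions against $V_e$ collapses $C(e_2) \cap H_e$ to $\bbR\cdot e_2$. Consequently any Cartan subalgebra in $H_e$ passing through $e_2$ would lie inside $C(e_2)$ and meet $H_e$ in only a $1$-dimensional subspace — so no such Cartan exists in $H_e$.

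\textbf{Main obstacle.} What remains is to exclude Cartans $\mathfrak{h} \subset H_e$ that avoid $e_2$. I expect this to follow from an analogous centralizer-type calculation at other non-regular elements of $H_e$, one for each of the three root walls of $\su(3)$, together covering every conjugacy class of Cartan. Alternatively, one parametrizes the Grassmannian of Cartan subalgebras as $SU(3)/N(T^2)$ and verifies directly that the six equations $\Ad(k^{-1}) V_e \subset \ft^\perp$ admit no common solution. Should $g_0 = e$ leave residual Cartans, I would instead test a nearby off-torus candidate such as $g_0 = \exp(tJ_1)$, for which $V_{g_0}$ is generic enough that the naive count $\dim SU(3) - 6 = 2$ can plausibly be sharpened to emptiness. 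Extending the single-wall obstruction at $e_2$ to a complete argument covering every Cartan subalgebra is the principal technical difficulty.
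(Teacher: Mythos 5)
Your framework is the right one: non-negativity is immediate from O'Neill's formula for the submersion $SU(3)\to\calO^5$, a flat plane in $\calO^5$ lifts to a horizontal abelian $2$-plane in $\su(3)$, and since $\rk\,\su(3)=2$ such a plane is automatically a Cartan subalgebra. So quasi-positivity reduces to exhibiting one $g$ for which $H_g$ contains no Cartan, and the paper's own remark (``the image of identity has positive curvature'') confirms that $g=e$ is the intended choice. Your identification $V_e=\operatorname{span}(I_1,\,J_1-J_2,\,K_1-K_2)$ and your computation that $C(e_2)\cap H_e=\bbR\,e_2$ are both correct, and this does rule out any Cartan subalgebra of $H_e$ passing through $e_2$.

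But the argument stops there, and you say so yourself: Cartan subalgebras of $H_e$ that avoid $e_2$ are not excluded, and that is the bulk of the claim. The heuristic dimension count $\dim\operatorname{Gr}(2,H_e)=\dim\bigl(SU(3)/N(T^2)\bigr)=6$ inside the $12$-dimensional $\operatorname{Gr}(2,\su(3))$ only predicts a $0$-dimensional intersection, which could still be a finite nonempty set, so it cannot by itself give emptiness. The fallbacks you sketch (wall-by-wall centralizer checks, or moving to $g_0=\exp(tJ_1)$) are unexecuted and would require the same sort of explicit verification. One concrete way to finish along your lines: since every Cartan subalgebra of $\su(3)$ contains a conjugate of $e_2$, it suffices to show $C(X)\cap H_e=\bbR X$ for \emph{every} $X\in H_e$ with eigenvalues proportional to $(i,i,-2i)$, not merely $X=e_2$; that (or an equivalent direct check that $[X,Y]=0$ forces $X,Y$ linearly dependent in $H_e$) is exactly the ``straightforward, but tedious'' computation the paper elects to omit, and your write-up does not carry it out. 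As written, the positivity at $\pi(e)$ is asserted, not proved.
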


In fact, the image of identity has positive curvature. The downside of this metric is that every singular point has planes of zero-curvature. The proof of this proposition is straightforward, but tedious. We will omit it since we will now show how to improve this metric using a Cheeger deformation.

Recall that in the construction of $\calO^5$ we use a non-standard $SO(3)$, see (\ref{SubAlg}), and we utilize the bases as of $\so(3)$ and $\su(2)$ as in (\ref{Bases}). We now let
\[
\fk = \so(3)\qquad and\qquad \fh=\su(2).
\]

We apply a Cheeger deformation along $SO(3)\subset SU(3)$, which results in a left-invariant, right $SO(3)$-invariant metric. As such, the $SU(2)_\phi$ acts by isometries, so the deformation induces a new metric on $\calO^5=SU(3)//SU(2)_\phi$.

\begin{thm}\label{ThmChDef}
$\calO$ with the metric induced by a Cheeger deformation along $SO(3)\subset SU(3)$ has the following properties:

\begin{enumerate}
\item $\calO$ has almost positive curvature.

\item The set of points with 0-curvature planes forms a totally geodesic flat 2-torus $T$ that is disjoint from the singular locus.

\item Each point in $T$ has exactly one 0-curvature plane, and those planes are tangent to $T$.
\end{enumerate}

\end{thm}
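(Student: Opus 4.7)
The plan is to invoke Eschenburg's Lemma from Section \ref{SecPrelim} to characterize the zero-curvature planes of $(SU(3), g_\lambda)$, and then intersect with the horizontality condition for the $SU(2)_\phi$-action. Setting $\fk = \so(3)$, $\fp = \fk^\perp$ in $\su(3)$, and identifying $T_g SU(3) \cong \su(3)$ via left translation by $g^{-1}$, Eschenburg says a plane $\sigma = \text{span}(X, Y)$ has $\sec_{g_\lambda}(\sigma) = 0$ iff $[X, Y] = [X^\fk, Y^\fk] = 0$. The first equation forces $\sigma$ into a $2$-dimensional maximal torus $\ft \subset \su(3)$ with $\sigma = \ft$, and since $\so(3)$ has rank $1$ the second equation is equivalent to $\ft \cap \fp \neq 0$. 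Because the O'Neill correction $[X, Y]^V$ vanishes whenever $[X, Y] = 0$, a zero-curvature plane at $\pi(g) \in \calO$ is exactly the projection of a horizontal such $\ft$ in $(SU(3), g_\lambda)$.

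The vertical space at $g$ translated to $\su(3)$ is
\[
\hat V_g = \{\Ad_{g^{-1}}\phi_1(\xi) - \phi_2(\xi) : \xi \in \su(2)\},
\]
and horizontality is $\ft \perp_{g_\lambda} \hat V_g$, which amounts to six linear equations that decouple along $\fk \oplus \fp$ since $g_\lambda$ acts as $\frac{\lambda+1}{\lambda} Q$ on $\fk$ and as $Q$ on $\fp$. I would split into cases based on whether $\ft$ passes through the non-regular element $D = \diag(i, i, -2i) \in \fp$ (in which case $\ft = \text{span}(X, D)$ for some $X$ in the standard upper $\su(2)$ block) or through a regular $Y \in \fp$ (in which case $\ft = Z_{\su(3)}(Y)$ is uniquely determined). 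Solving the resulting linear systems should cut out the $5$-dimensional $SU(2)_\phi$-saturated locus $\pi^{-1}(T) \subset SU(3)$, whose quotient is the $2$-torus $T$.

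I would then identify $\pi^{-1}(T) = SU(2)_\phi \cdot T'$ for a specific flat totally geodesic $2$-torus $T' \subset (SU(3), g_\lambda)$, which I expect to be a coset of a $2$-dimensional abelian subgroup whose tangent algebra realizes the conditions above at every point. Flatness of $T$ follows since $T'$ is abelian with $g_\lambda$ restricted to a flat left-invariant metric. Total-geodesicness follows from the Koszul formula giving $\nabla_X Y = 0$ for $X, Y$ left-invariant tangent to $T'$, using $[X, Y] = 0$ together with the block decoupling of $g_\lambda$; since $\pi$ is a Riemannian submersion and $T'$ is horizontal, $T$ is totally geodesic in $\calO$. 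Disjointness from the singular locus is by direct comparison with the explicit singular-orbit representatives $g_z$ from the preceding section, which one checks do not satisfy the horizontality equations. Uniqueness of the zero-curvature plane at each point of $T$ follows from the uniqueness of $Z_{\su(3)}(Y)$ for regular $Y$.

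The main obstacle is the explicit solution of the horizontality system, showing that the combined regular and non-regular loci modulo the $SU(2)_\phi$-action assemble into exactly a single $2$-torus with the tangent algebra of $T'$, and that this torus avoids the singular circle. The non-standard embedding (\ref{SubAlg}) of $\so(3)$ adds some bookkeeping, but the rank-one structure of $\so(3)$ and the block structure of $\hat V_g$ keep the system tractable.
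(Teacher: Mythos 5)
The plan starts from the right place: Eschenburg's flatness criterion, the correct description of the vertical space, and the observation that $[X^\fk,Y^\fk]=0$ forces $\ft\cap\fp\neq 0$. But there are several genuine gaps, and the places where you hand-wave are precisely where the real work lies.

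\textbf{The case split is misleading, and your uniqueness argument fails.} You propose to split on whether $\ft$ meets $\fp$ in the non-regular direction $D=\diag(i,i,-2i)$ or in a regular direction $Y$, and attribute uniqueness of the flat plane to ``uniqueness of $Z_{\su(3)}(Y)$ for regular $Y$.'' In fact, the analysis in the paper shows the regular case never occurs: once one establishes that the second basis vector $B$ of the flat plane must have $B^\fk\neq 0$ (using positivity of curvature on $S^5=SU(3)/SU(2)$, which your sketch never invokes), the symmetric-pair relations force $[A,B^\fk]=0$; writing $B^\fk = t\,\Ad(u)I_2$ with $u\in SO(3)$ one finds $\Ad(u^{-1})A$ must be diagonal and lie in $\fp$, whose diagonal slice is exactly $\bbR D$. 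So $A$ is always $SO(3)$-conjugate to $D$, hence non-regular. Since $Z_{\su(3)}(D)\cong\su(2)\oplus\bbR D$ is $4$-dimensional, the centralizer argument cannot single out a plane; the uniqueness of the flat plane at each point comes instead from the horizontality equations determining $B$ up to scale, a computation your sketch leaves entirely open.

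\textbf{You omit the decisive normalization step.} The horizontality system in $g$ is intractable at an arbitrary base point; the key observation (absent from your sketch) is that $\langle\Ad(gu)D,\fh\rangle=0$ forces $gu\in U(2)$ for the $SO(3)$-element $u$ with $A=\Ad(u)D$, so after moving within the $SU(2)_\phi$-orbit one may assume $g\in U(2)$, and then $A=D$ on the nose. This reduction turns the problem into a small linear system in the entries of a $U(2)$-matrix (and yields $|a|^2=3/4$, $|b|^2=1/4$ for the upper block), which is where the explicit two-parameter family parametrizing $T$ actually comes from.

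\textbf{Flatness and total geodesicness: the abelian-subgroup expectation is unjustified and probably wrong.} You expect $\pi^{-1}(T)=SU(2)_\phi\cdot T'$ for $T'$ a coset of a $2$-dimensional abelian subgroup, and derive flatness and total geodesicness from that. But the explicit locus found by solving the horizontality equations is
\[
g = \begin{pmatrix}\tfrac{\sqrt 3}{2} & \tfrac{1}{2}e^{si} & \\ -\tfrac{1}{2}e^{-si} & \tfrac{\sqrt 3}{2} & \\ && 1\end{pmatrix}\begin{pmatrix}w&&\\&w&\\&&\bar w^2\end{pmatrix},
\]
which is a $2$-torus as a subset but does not visibly sit as a coset of an abelian subgroup, and the intrinsic Koszul computation you gesture at has no obvious reason to close up. The paper sidesteps this entirely by invoking Wilking's theorem (if $M$ is a normal biquotient and $\sigma$ is a flat plane, then $\exp:\sigma\to M$ is a totally geodesic isometric immersion), applied after recasting $(\calO,g_\lambda)$ as a normal biquotient of $SU(3)\times SO(3)$; this immediately gives that $T$ is a smooth flat totally geodesic $2$-torus and that the flat planes are tangent to $T$, without ever identifying a subgroup. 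You should either find a genuinely new argument here or use Wilking's result; as written, this step is a gap.

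Finally, the disjointness from the singular locus is not something to ``check by direct comparison'' in the abstract: it falls out of the explicit parametrization above (the points of $T$ lie in orbits with a nonzero upper-left entry, so they are not in the orbits of the $g_z$, which are the only points with nontrivial stabilizer); without the explicit solution you have nothing to compare.
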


\begin{proof}
As in section \ref{SecPrelim}, we denote the bi-invariant metric on $SU(3)$ by $\langle\cdot,\cdot\rangle$ and the Cheeger deformed metric by $\langle\cdot,\cdot\rangle_\lambda$. Left translations are isometric in $\langle\cdot,\cdot\rangle_\lambda$, and hence we can identify tangent vectors at $g\in SU(3)$ with vectors in $\su(3)$. Under this identification, the vertical space become $\{\psi(C) - \Ad(g^{-1})C\mid C\in\fh\}$, where $\psi:\fh\to\fk$ is defined by $I_1\mapsto I_2,J_1\mapsto J_2$ and $K_1\mapsto K_2$. Then a vector $X\in\su(3)$ is horizontal at $g$ iff $\langle X, \psi(C) - \Ad(g^{-1})C\rangle_\lambda=0$ for all $C\in\fh$. Also recall that in $\langle\cdot,\cdot\rangle_\lambda$, a plane spanned by $A,B\in \su(3)$ is flat iff $[A,B]=[A^\fk,B^\fk]=0$. Additionally, for some computations in this proof, we recall that $\langle X,Y\rangle_\lambda = \langle X^{\fk^\perp}, Y^{\fk^{\perp}}\rangle + \frac{\lambda}{1+\lambda}\langle X^\fk , Y^\fk \rangle$. We will use $\nu = \frac{\lambda}{1+\lambda}\in(0,1)$ for brevity.

Suppose that at the image of some point $g\in SU(3)$, we have 0-curvature. Let $A,B$ be two elements of $T_g^h$ (left translated to the identity), which span a 0-curvature plane.

We begin by making a series of claims:

\begin{enumerate}
\item We may assume $A^\fk=0$.

Indeed, if $A^\fk\neq0$ and $B^\fk\neq0$, then $[A^\fk,B^\fk]=0$ iff $A^\fk=c\cdot B^\fk$ since $\fk$ has rank one.

\item $\langle \Ad(g)A,\fh\rangle = 0$.

Observe that since $\langle A,\fk\rangle = 0$, we have $\langle \Ad(g) A, \fh\rangle = \langle A, \Ad(g^{-1})\fh\rangle = \langle A, \Ad(g^{-1})\fh\rangle_\lambda$. If $\langle A, \Ad(g^{-1})(C)\rangle_\lambda\neq0$, then, since $A\in\fk^\perp$, we have $\langle A, \psi(C)- \Ad(g^{-1})C\rangle_\lambda\neq0$, so $A$ is not horizontal.

\item $B^\fk\neq 0$.

Suppose that $B^\fk = 0$, then by the same argument as we used with $A$, we must have $\langle \Ad(g) B, \fh\rangle = 0$. In particular, both $\Ad(g) A$ and $\Ad(g) B$ are horizontal with respect to the submersion $SU(3) \to SU(3)/SU(2) = S^5$. Since, endowed with the metric induced by $\langle\cdot,\cdot\rangle$, $S^5$ has positive sectional curvature, it follows that $0\neq [\Ad(g)A,\Ad(g)B] = \Ad(g)[A,B]$, in particular, $[A,B]\neq 0$.

\item Up to scaling $A=\Ad(u)X$ for some $u\in SO(3)$, where $X=\diag(i,i,-2i)$.

Observe that $B^\fk=t\cdot\Ad(u)I_2$ for some non-zero $t\in\bbR$ and $u\in SO(3)$. Notice that since $A\in\fk^\perp$, it follows that $[A,B^\fk]\in\fk^\perp$ and since $(\fg,\fk)$ is a symmetric pair, $[A,B^{\fk^\perp}]\in\fk$, and hence we must have $[A,B^\fk]=[A,B]^{\fk^\perp}=0$. Therefore, $0 = [A,B^\fk] = t\cdot\Ad(u)[\Ad(u^{-1})A,I_2]$. A matrix that commutes with $I_2$ must be diagonal. Since $A\in\fk^\perp$, and $u\in SO(3)$, $\Ad(u^{-1})A\in\fk^\perp$ as well. In particular, $\Ad(u^{-1})A$ is also orthogonal to $I_2$, which implies our claim. Furthermore, since scaling $A$ does not change the plane spanned by $A,B$ we will assume that $A = \Ad(u)X$.

\item By changing the point in the orbit, we may assume that $g\in U(2)$.

First, we observe that $\langle \Ad(gu)X,\fh\rangle = \langle \Ad(g)A,\fh\rangle = 0$, since $A=\Ad(u)X$.

Let
\[
gu = \begin{pmatrix}
a_1 & a_2 & a_3\\
b_1 & b_2 & b_3\\
c_1 & c_2 & c_3
\end{pmatrix}
\qquad\text{and}\qquad
\Ad(gu)X = \begin{pmatrix}
m_{11} & m_{12} & m_{13}\\
m_{21} & m_{22} & m_{23}\\
m_{31} & m_{32} & m_{33}
\end{pmatrix}.
\]
Since this has to be orthogonal to $\fh$ with respect to the bi-invariant metric, we conclude that $m_{13}=0,m_{23}=0,m_{11}=m_{22}$ and $m_{33}=-2 m_{11}$. We compute that $m_{13} = (a_1 \bar{c_1} + a_2\bar{c_2} -2 a_3\bar{c_3})i= (-3a_3\bar{c_3})i$. This is zero iff $a_3=0$ or $b_3=0$. Similarly, $m_{23}=0$ implies $b_3=0$ or $c_3=0$. We also compute that $m_{11} = (|a_1|^2 + |a_2|^2 -2|a_3|^2)i = (1-3|a_3|^2)i$ and $m_{22} = (|b_1|^2 + |b_2|^2 - 2|b_3|^2)i = (1-3|b_3|^2)i$. Since $m_{11}=m_{22}$, we must have $|b_3|=|a_3|$. Finally, we see that $m_{33}+2m_{11} = (1-|c_3|^2) + 2(1-3|a_3|^2) = 0$. Suppose that $c_3=0$, then $|a_3|^2+|b_3|^2=1$, so $|a_3|^2=1/2$, but this implies that $1+(2-3\cdot 1/2) = 3/2\neq 0$, so $m_{33}\neq -2 m_{11}$. Therefore, $c_3\neq 0$, which implies that both $a_3$ and $b_3$ are zero, which also implies $c_1=c_2=0$. Therefore, $gu\in U(2)\subset SU(3)$, and so $g\in SU(2)\diag(w,w,\bar{w}^2)SO(3)$. Thus, $g = u\cdot \diag(w,w,\bar{w}^2)\cdot v$ with $u\in SU(2),v\in SO(3)$. Since there exists $x\in SU(2)$ such that $(x,v)\in SU(2)_\phi$, we can change the point in the orbit and assume that $g\in U(2)$.

\item We may assume that $A = X = \diag(i,i,-2i)$.

Since $\langle A, \Ad(g^{-1})\fh\rangle = 0$ and $g\in U(2)$, it follows that $A\in \fh^\perp$, and as we saw, $A\in\fk^\perp$ as well. Thus,
\[
A\in \fk^\perp\cap \fh^\perp = \left\{\left.
\begin{pmatrix}
ti & 0 & z\\
0 & ti & \bar{z}\\
-\bar{z} & -z & -2ti
\end{pmatrix}
\right| t\in\bbR,z\in\bbC
\right\}.
\]
We furthermore know that $A$ is conjugate to $\diag(i,i,-2i)$ and so has eigenvalues $i,i,-2i$. In particular, there is a repeated pair. The eigenvalues of $A$ as above are:
\[
ti,-ti/2\pm1/2\sqrt{-9t^2-8|z|^2}
\]
The last two are equal iff $t=z=0$, which means we have $A=0$. Therefore, the repeated eigenvalue is $ti$. So, the determinant must be $(ti)^2(-2ti) = 2t^3i$.
Computing the determinant of $A$, we have $\det(A) = 2t^3i + 2t|z|^2i$. Therefore $z=0$, and hence $A=X$.
\end{enumerate}

We now examine the possible values of $B$. Since $[A,B]=0$, this implies that $B = \begin{pmatrix}ri&z\\
-\bar{z}&-(r+s)i&\\
&&si\end{pmatrix}$, where $r,s\in\bbR$, $z\in\bbC$. We can also assume that $s=0$ by replacing $B$ with $B+(s/2)A$.

The question now becomes when a $B$ of this form is a horizontal vector at
\[
g=\begin{pmatrix}a&b&\\
-\bar{b}&\bar{a}&\\
&&1
\end{pmatrix}\begin{pmatrix}w&&\\
&w&\\
&&\bar{w}^2
\end{pmatrix}.
\]

From $\langle B, I_2-\Ad(g^{-1})I_1\rangle_{\lambda}=0$, we get:

\begin{align*}
0 &=\langle B, I_2 - \Ad(g^{-1})I_1\rangle_{\lambda}\\
&=
\left\langle
\begin{pmatrix}
ri & z &\\
-\bar{z} & -ri &\\
&& 0
\end{pmatrix},
\begin{pmatrix}
(2+|b|^2 - |a|^2)i & -2i\bar{a}b & \\
-2i \bar{b}a & (-2 + |a|^2 - |b|^2)i &\\
&&0
\end{pmatrix}
\right\rangle_\lambda\\
&= \nu r (2+|b|^2-|a|^2) + (-\bar{az}b i + az\bar{b} i)\\
&= \nu r (3|b|^2 + |a|^2) + 2Im(\bar{az}b).
\end{align*}
So, $r = \dfrac{-2 Im(\bar{az}b)}{\nu(|a|^2 + 3|b|^2)}$.

If we plug in $a=0$ or $b=0$, we get $r=0$, so $B^{\fk}=0$, which contradicts one of our earlier observations. So we may assume that $a\neq0,b\neq0$. Under these assumptions, plugging in what we obtained for $r$, we get:

\begin{align*}
0 &= \langle B, J_2 - \Ad(g^{-1})J_1\rangle_{\lambda}\\
&= \left\langle
\begin{pmatrix}
\dfrac{-2 Im(\bar{az}b)}{\nu(|a|^2 + 3|b|^2)} i & z &\\
-\bar{z} & \dfrac{2 Im(\bar{az}b)}{\nu(|a|^2 + 3|b|^2)} i &\\
&&0
\end{pmatrix},
\begin{pmatrix}
-ab+\bar{ab} & -b^2 - \bar{a}^2 & \sqrt{2}\\
a^2 + \bar{b}^2 & ab-\bar{ab} & -\sqrt{2}\\
-\sqrt{2} & \sqrt{2} & 0
\end{pmatrix}
\right\rangle_\lambda\\
&=\frac{-1}{2(|a|^2+3|b|^2)} \left(3 |a|^2 b^2 \bar{z} + a^2 |b|^2 z + \bar{a}^2 |b|^2 \bar{z} + 3 |a|^2 \bar{b}^2 z + 3 |b|^2 b^2 \bar{z} + |a|^2 \bar{a}^2 \bar{z} + |a|^2 a^2 z + 3 |b|^2 \bar{b}^2 z\right)\\
&= \frac{-1}{2(|a|^2+3|b|^2)}(|a|^2+|b|^2)(a^2 z + 3 b^2 \bar{z} + \bar{a}^2 \bar{z} + 3 \bar{b}^2 z) = \frac{-1}{|a|^2+3|b|^2} Re(a^2 z + 3 b^2 \bar{z}).
\end{align*}
So, $Re(a^2 z + 3 b^2 \bar{z})=0$.

Similarly,

\begin{align*}
0&=\langle B, K_2 - \Ad(g^{-1})K_1\rangle_{\lambda}\\
&= \left\langle
\begin{pmatrix}
\dfrac{-2 Im(\bar{az}b)}{\nu(|a|^2 + 3|b|^2)} i & z &\\
-\bar{z} & \dfrac{2 Im(\bar{az}b)}{\nu(|a|^2 + 3|b|^2)} i &\\
&&0
\end{pmatrix},
\begin{pmatrix}
(ab+\bar{ab})i & (b^2 - \bar{a}^2)i & \sqrt{2} i\\
(\bar{b}^2 - a^2)i & -(ab+\bar{ab})i & \sqrt{2} i\\
\sqrt{2} i & \sqrt{2} i & 0
\end{pmatrix}
\right\rangle_\lambda\\
&= \frac{-i}{2(|a|^2+3|b|^2)} \left(-3 |a|^2 b^2 \bar{z} - a^2 |b|^2 z + \bar{a}^2 |b|^2 \bar{z} + 3 |a|^2 \bar{b}^2 z - 3 |b|^2 b^2 \bar{z} + |a|^2 \bar{a}^2 \bar{z} - |a|^2 a^2 z + 3 |b|^2 \bar{b}^2 z\right)\\
&= \frac{-i(|a|^2+|b|^2)}{2(|a|^2+3|b|^2)} (-3 b^2 \bar{z} - a^2 z + \bar{a}^2 \bar{z} + 3 \bar{b}^2 z) = \frac{1}{|a|^2 + 3|b|^2} Im(3 b^2 \bar{z} + a^2 z).
\end{align*}
So, $Im(a^2 z + 3 b^2 \bar{z}) = 0$.

Together, these observations imply that $a^2z+3b^2\bar{z} = 0$, so $|a|^2 = 3|b|^2$, hence $|b|^2=1/4$ and $|a|^2=3/4$. Furthermore, given $a$ and $b$, $z$ is unique up to scaling (which also scales $r$). Therefore $B$, if it exists, is unique up to scaling. This proves that each point at which there exists a plane of zero-curvature has a unique such plane.

Furthermore, we have
\[
g = \begin{pmatrix}
\frac{\sqrt{3}}{2} e^{ti} & \frac{1}{2} e^{si} &\\
-\frac{1}{2} e^{-si} & \frac{\sqrt{3}}{2} e^{-ti} &\\
&& 1
\end{pmatrix}
\begin{pmatrix}
w &&\\
&w&\\
&&\bar{w}^2
\end{pmatrix}.
\]
By applying $(\diag(e^{ti},e^{-ti},1),\diag(e^{2ti},e^{-2ti},1))\in SU(2)_\phi$ to $g$, we see that by changing the point in the orbit, we can assume that
\[
g = \begin{pmatrix}
\frac{\sqrt{3}}{2} & \frac{1}{2} e^{s'i} &\\
-\frac{1}{2} e^{-s'i} & \frac{\sqrt{3}}{2}&\\
&& 1
\end{pmatrix}
\begin{pmatrix}
w &&\\
&w&\\
&&\bar{w}^2
\end{pmatrix}.
\]
It is easy to verify that the choice of such representative element is unique up to replacing $w$ by $-w$. Therefore, the set of points with 0-curvature planes is in a one-to-one correspondence to a 2-torus.

We now use a result by Wilking:

\begin{prop}(Wilking \cite{WApos}) Let $M$ be a normal biquotient. Suppose $\sigma\subset T_p M$ is a plane satisfying $\sec(\sigma)=0$. Then the map $\exp:\sigma\to M,v\mapsto \exp(v)$ is a totally geodesic isometric immersion.
\end{prop}

To apply this to a biquotient $G//U$ with a Cheeger deformed metric, observe that $G//U = (G\times\lambda K)//U'$, where $U' = \{((u_l,u_r^{-1}),(k,k)) | (u_l,u_r)\in U, k\in K\}\subset (G\times K)\times (G\times K)$. In this context, Wilking's result tells us that exponentiating a flat plane results in a flat totally geodesic subspace. Let $T$ be the set of all points in $SU(3)//SU(2)_\phi$ with flat planes. If $p\in T$ and $\sigma$ is the unique flat 2-plane at $p$, it follows that near $p$ we have $T=\exp_p\sigma$. In particular, $T$ is smooth and hence diffeomorphic to a 2-torus. Furthermore, it follows that for all $p\in T$, the unique flat plane must be tangent to $T$. This concludes the proof of Theorem \ref{ThmChDef}.
\end{proof}

Theorem \ref{ThmO5} follows immediately from Theorem \ref{ThmChDef}, and in particular, Theorem \ref{ThmChDef} tells us what metric to use for Theorem \ref{ThmO5}. An interesting question is whether the metric in Theorem \ref{ThmChDef} can be further deformed to give a metric of positive curvature. The author has made an attempt to achieve this by doing an additional Cheeger deformation along $SU(2)$ on the left; however, the curvature properties appear to be unchanged.

\begin{proof}[Proof of Corollary \ref{CorAlex}]
Observe that
\[
S^1=\left\{
\begin{pmatrix}
z &&\\
& z &\\
&& \bar{z}^2
\end{pmatrix}
\right\}
\]
acts on the left on $SU(3)$. Furthermore, this $S^1$ commutes with the $SU(2)$ action in the construction of $\calO$. Therefore, we get an Alexandrov space $X^4 = \calO/S^1$.

Furthermore, note that each 0-curvature plane of $\calO^5$ contains a direction (the vector $A$ from before) tangent to the fiber of this action. Therefore, by O'Neil's formula, $X^4$ has positive curvature.

Additionally, note that when $z=-1$, the action is trivial, and corresponds to the action of $-I\in SU(2)_\phi\subset SU(3)\times SU(3)$. Therefore, we conclude that $X^4=SU(3)//U(2)$.
\end{proof}

\section{Eschenburg Spaces and Orbifolds}\label{SecEsch}

\subsection{Seven Dimensional Family}

First introduced in \cite{EHab}, Eschenburg spaces are a rich family of 7-dimensional manifolds (the construction can be generalized to orbifolds as well), that all admit quasi-positive curvature \cite{KThes}, and many of which admit positive curvature. Eschenburg spaces are defined as

\[E^7_{p,q}=SU(3)//S^1_{p,q}\]

where $p,q\in \bbZ^3$, $\sum p_i=\sum q_i$. Furthermore for the action to be free, we need that
\[
(p_1-q_{\sigma(1)},p_2-q_{\sigma(2)})=1\qquad\text{for any }\sigma\in S_3.
\]

More generally, if we allow Eschenburg orbifolds, then the condition is relaxed to $p$ and $q$ not being permutations of each other, in other words, for $\sigma\in S_3$ we have
\[
(p_1-q_{\sigma(1)},p_2-q_{\sigma(2)})\neq 0.
\]

The action of $S^1_{p,q}$ on $SU(3)$ is given by $z\star X = \diag(z^{p_1},z^{p_2},z^{p_3})\cdot X\cdot \diag(\bar{z}^{q_1},\bar{z}^{q_2},\bar{z}^{q_3})$. Eschenburg showed that this space admits a metric of positive curvature when deformed along one of the three block embeddings of $U(2)\subset SU(3)$, iff $q_i\not\in[\min\{p_j\},\max\{p_j\}]$ for each $i$. Kerin further showed that all Eschenburg spaces have quasi-positive curvature, and if
\[
q_1<q_2=p_1<p_2\leq p_3<q_3\;or\;q_1<p_1\leq p_2<p_3=q_2<q_3,
\]
the metric has almost positive curvature. Since all the above results are proven on the Lie algebra level, they hold when we generalize to Eschenburg orbifolds.

Before examining the idea of orbifold fibrations of Eschenburg spaces by Florit and Ziller \cite{FZOrbi}, let us first examine the orbifold structure of Eschenburg orbifolds, since it will be similar to that of the orbifold fibrations.

It is easy to verify that the singular locus of an Eschenburg orbifold $SU(3)//S^1_{p,q}$ consists of some combination of circles denoted $\calC_\sigma$ and lens spaces (possibly including $S^3$ and $S^2\times S^1$) denoted $\calL_{ij}$. Furthermore, each such component is a totally geodesic suborbifold. In this construction we include a minor correction to the work of Florit and Ziller to ensure that $U(2)_{ij}$ and $T^2_\sigma$ are always subsets of $SU(3)$.

We define $\calL_{ij}$ to be the images in $E_{p,q}^7$ of $U(2)_{ij}\subset SU(3)$, defined as
\[
U(2)_{ij}=\left\lbrace
\tau_i
\begin{pmatrix}
A & 0\\
0 & \bar{\det A}
\end{pmatrix}
\tau_j : A\in U(2)\right\rbrace,\qquad 1\leq i,j\leq 3
\]
where $\tau_i\in S_3 \subset O(3)$ with $\tau_1,\tau_2$ interchanging the $3^{rd}$ vector with the $1^{st}$ and $2^{nd}$ respectively, and $\tau_3=-I$.

Furthermore, we define $\calC_\sigma$ for $\sigma\in S_3$ as the projections of $T^2_\sigma$, which are defined as
\[
T^2_\sigma = \sgn(\sigma)\sigma^{-1}\diag(z,w,\bar{zw})
\]
where we view $S_3\subset O(3)$ and $\sgn(\sigma)$ is $1$ if $\sigma$ even and $-1$ if $\sigma$ odd.

From this listing, we can observe that the singular locus has the following structure

\begin{figure}[H]
\begin{tikzpicture}
\fill (90:3cm) circle (2pt) node [above] {$\calC_{Id}$};
\fill (30:3cm) circle (2pt) node [above right] {$\calC_{(12)}$};
\fill (330:3cm) circle (2pt) node [below right] {$\calC_{(132)}$};
\fill (270:3cm) circle (2pt) node [below] {$\calC_{(13)}$};
\fill (210:3cm) circle (2pt) node [below left] {$\calC_{(123)}$};
\fill (150:3cm) circle (2pt) node [above left] {$\calC_{(23)}$};
\draw (90:3cm) -- node [above right] {$\calL_{33}$} (30:3cm) ;
\draw (30:3cm) -- node [right] {$\calL_{21}$} (330:3cm);
\draw (330:3cm) -- node [below right] {$\calL_{13}$} (270:3cm);
\draw (270:3cm) -- node [below left] {$\calL_{31}$} (210:3cm);
\draw (210:3cm) -- node [left] {$\calL_{23}$} (150:3cm);
\draw (150:3cm) -- node [above left] {$\calL_{11}$} (90:3cm);
\draw (90:3cm) -- node [near end,left] {$\calL_{22}$} (270:3cm);
\draw (30:3cm) -- node [near start,below right] {$\calL_{12}$} (210:3cm);
\draw (330:3cm) -- node [near end,above right] {$\calL_{32}$} (150:3cm);
\end{tikzpicture}
\caption{Structure of the singular locus \cite{FZOrbi}}\label{FigSL}
\end{figure}
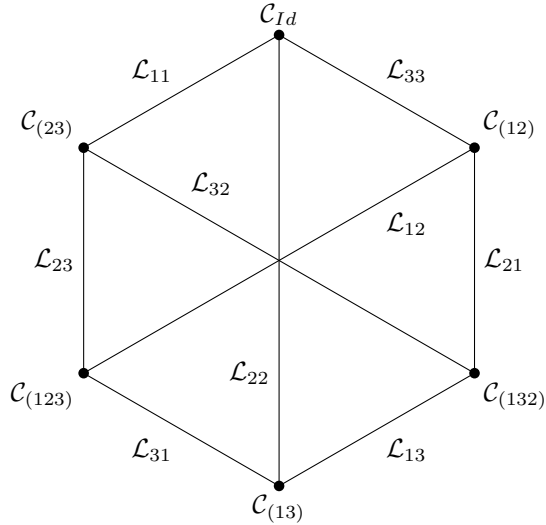

where $\calL_{ij}$ connecting $\calC_{\sigma}$ and $\calC_{\tau}$ means that both $\calC_{\sigma}$ and $\calC_{\tau}$ lie in $\calL_{ij}$. We compute the orbifold groups along $\calC_\sigma$ in Theorem \ref{ThmOG7}, and the orbifold groups along $\calL_{ij}$ are implied by Lemma \ref{LemLC}.

\subsection{Construction of the Six Dimensional Family}

For the construction of the six dimensional family of Exchenburg spaces, Florit and Ziller \cite{FZOrbi} considered fibrations of the form $E_{p,q}^7/S^1_{a,b}$. Given $a,b\in\bbZ^3,\sum a_i=\sum b_i$ we define $S^1_{a,b}$ acting on $SU(3)$ as before, and furthermore, this action induces an action of $S^1_{a,b}$ on $E^7_{p,q}$. In this paper we consider these orbifolds more directly as $\calO_{a,b}^{p,q} = SU(3)//T^2$, where $T^2$ is generated by the two circles $S^1_{p,q}$ and $S^1_{a,b}$.

Florit and Ziller prove that

\begin{thm}
The action of $T^2 = S^1_{a,b}\times S^1_{p,q}$ on $SU(3)$ is almost free iff
\[
(p-q_\sigma)\;and\;(a-b_\sigma)\;are\;linearly\;independent,\;for\;all\;\sigma\in S_3.
\]
The quotient $\calO^{a,b}_{p,q}$ is then an orbifold whose singular locus is the union of at most nine orbifold 2-spheres and six points that are arranged according to the schematic diagram in Figure \ref{FigSL} above.
\end{thm}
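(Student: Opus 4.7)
My plan is to reduce both statements to Lemma \ref{LemOrbiBi} and then to a conjugacy question for diagonal matrices in $SU(3)$. Since $T^2$ is abelian, the maximal abelian subalgebra $\ft_\fu\subset\fu$ is all of $\fu$, and a generic element has the form
\[
(X_1,X_2)=\bigl(i\cdot\diag(sp_1+ta_1,\,sp_2+ta_2,\,sp_3+ta_3),\;\;i\cdot\diag(sq_1+tb_1,\,sq_2+tb_2,\,sq_3+tb_3)\bigr)
\]
for $(s,t)\in\bbR^2$. By Lemma \ref{LemOrbiBi}, $\calO^{a,b}_{p,q}$ fails to be an orbifold precisely when there is a nonzero $(s,t)$ and a $g\in SU(3)$ with $X_1=\Ad(g)X_2$. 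Since both $X_1$ and $X_2$ are diagonal, the existence of such a $g$ is equivalent to $X_1$ and $X_2$ having the same multiset of eigenvalues, i.e.\ the existence of $\sigma\in S_3$ with
\[
s(p_i-q_{\sigma(i)})+t(a_i-b_{\sigma(i)})=0,\qquad i=1,2,3.
\]
This is a nontrivial linear relation in $(s,t)$ exactly when the two vectors $p-q_\sigma$ and $a-b_\sigma$ in $\bbR^3$ are linearly dependent. Taking the contrapositive over all $\sigma$ yields the stated almost-free criterion.

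For the singular locus, I would fix $\sigma\in S_3$ for which $p-q_\sigma$ and $a-b_\sigma$ are linearly independent (the generic case is excluded by hypothesis, but nothing prevents a given $\sigma$ from contributing finite isotropy). The set of $(s,t)\in\bbR^2$ producing finite isotropy is a discrete subgroup of a line $\ell_\sigma\subset\bbR^2$; the associated orbit is determined by which $g\in SU(3)$ conjugates the diagonal matrix $i\diag(sq_i+tb_i)$ to $i\diag(sp_i+ta_i)$. When the three diagonal entries are distinct, $g$ is determined up to the maximal torus by the permutation $\sigma$, and is represented by the corresponding signed permutation matrix $\tau$; modding out by $T^2$ on both sides produces the six points $\calC_\sigma$, given as projections of the tori $T^2_\sigma$ of the previous subsection. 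When two diagonal entries coincide, $g$ is only determined up to a block $U(2)$, and one verifies that the resulting orbit is precisely the image of $U(2)_{ij}$ for an appropriate choice of $i,j$ (indexed by which pair of coordinates is merged on the left and right, respectively); this produces (at most) nine two-dimensional strata $\calL_{ij}$, each an orbifold quotient of $U(2)$ by a $T^2$-action, hence an orbifold $2$-sphere.

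The incidence pattern of Figure \ref{FigSL} is then read off directly: $\calC_\sigma\subset\calL_{ij}$ precisely when the permutation matrix $\tau_\sigma$ lies in $U(2)_{ij}$, equivalently when $\sigma$ fixes the coordinate whose image is the isolated eigenvalue on each side. This gives exactly two $\calC_\sigma$ on each $\calL_{ij}$, reproducing the diagram.

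The main obstacle I expect is the degenerate case: one must be careful to identify $U(2)_{ij}$ as the correct set of elements $g$ that conjugate one partially-degenerate diagonal matrix to another, taking into account the non-standard placements (the matrices $\tau_i$) needed to land inside $SU(3)$ with the right determinant condition. Once this bookkeeping is handled and one checks that the number of such $\calL_{ij}$ is nine and that they are in one-to-one correspondence with ordered pairs $(i,j)$ of indices of the merged coordinates on the two sides, the structure theorem follows.
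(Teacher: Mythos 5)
This theorem is stated in the paper as a citation of Florit and Ziller's Theorem B from \cite{FZOrbi}; the paper provides no proof of it, so there is no internal argument to compare against. That said, your reconstruction is a reasonable and largely correct plan, and it has the virtue of routing the argument through the paper's own Lemma \ref{LemOrbiBi}, which is precisely the tool Yeroshkin develops for such questions. The first half of your plan (the almost-free criterion) is in fact essentially a complete proof: since $T^2$ is abelian, $\ft_\fu=\fu$, and two diagonal skew-Hermitian matrices are conjugate in $SU(3)$ exactly when they agree up to a permutation of eigenvalues, so the failure of the criterion in Lemma \ref{LemOrbiBi} unwinds cleanly to linear dependence of $p-q_\sigma$ and $a-b_\sigma$ for some $\sigma$. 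That part is rigorous and correct.

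The second half is a reasonable outline but has two soft spots worth flagging. First, the sentence ``the set of $(s,t)\in\bbR^2$ producing finite isotropy is a discrete subgroup of a line $\ell_\sigma$'' conflates the Lie-algebra picture with the group picture. Under the hypothesis that $p-q_\sigma$ and $a-b_\sigma$ are independent, the \emph{Lie-algebra} system $s(p_i-q_{\sigma(i)})+t(a_i-b_{\sigma(i)})=0$ has only the trivial solution (there is no line); what is finite is the subgroup of $(z,w)\in T^2$ satisfying $z^{\,p_i-q_{\sigma(i)}}w^{\,a_i-b_{\sigma(i)}}=1$ for all $i$, and this is a finite subgroup of $T^2$, not of a line in $\bbR^2$. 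Second, the assertion that each $\calL_{ij}$ is an orbifold $2$-sphere and that ``$\calC_\sigma\subset\calL_{ij}$ precisely when $\sigma$ fixes the coordinate whose image is the isolated eigenvalue on each side'' are both stated without verification; the first requires actually computing the biquotient $T^2\backslash\!\backslash U(2)$, and the second, as written, does not reproduce Figure \ref{FigSL} (for instance $\calC_{(132)}\subset\calL_{21}$ even though $(132)$ has no fixed point). The correct incidence rule comes from checking when $T^2_\sigma\subset U(2)_{ij}$ directly, with the $\tau_i,\tau_j$ and sign conventions needed to stay in $SU(3)$ handled carefully, which is precisely the bookkeeping you anticipated as the ``main obstacle.'' So: the overall architecture is right and the first statement is fully proved, but the second statement still needs those two verifications carried out.
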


\subsection{Equivalence of Actions by $T^2$}

It is clear that $\calO_{p,q}^{a,b}=\calO_{a,b}^{p,q}$. A natural question is what other ways are there to write the same biquotient?

\begin{prop}\label{EqAct}
$\calO_{p,q}^{a,b}=\calO_{p',q'}^{a',b'}$ whenever $a',b',p',q'\in\bbZ^3$ are given as follows:

\begin{enumerate}
\item $a'=b,b'=a,p'=q,q'=p$

\item $a'=\lambda a,b'=\lambda b$, $p'=\mu p, q'=\mu q$ where $\lambda,\mu\in\bbQ\setminus\{0\}$.

\item $a'=(a_1+c,a_2+c,a_3+c),b'=(b_1+c,b_2+c,b_3+c)$, $p'=(p_1+d,p_2+d,p_3+d),q'=(q_1+d,q_2+d,q_3+d)$ where $c,d\in\bbZ$.

\item $a'=\sigma(a),b'=\tau(b),p'=\sigma(p),q'=\tau(q)$, where $\sigma,\tau\in S_3$ act by permutation.

\item
\[
\begin{pmatrix}a'\\p'\end{pmatrix}=A\begin{pmatrix}a\\p\end{pmatrix} \qquad \begin{pmatrix}b'\\q'\end{pmatrix}=A\begin{pmatrix}b\\q\end{pmatrix}
\]

where $A\in GL_2(\bbZ)$.
\end{enumerate}

\end{prop}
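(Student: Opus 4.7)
The plan is to show that each of the five moves corresponds to one of three situations: equality of the subgroup $T^2 \subset SU(3)\times SU(3)$, modification of it by a subgroup that acts trivially on $SU(3)$, or realization by an equivariant self-diffeomorphism of $SU(3)$. Any of these yields the desired equality of biquotients.

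I would handle (1), (2), and (5) uniformly. The circles $S^1_{a,b}$ and $S^1_{p,q}$ are connected closed one-parameter subgroups of $T^3 \times T^3 \subset SU(3)\times SU(3)$, with weight vectors $v_1 = (a_1,a_2,a_3,b_1,b_2,b_3)$ and $v_2 = (p_1,p_2,p_3,q_1,q_2,q_3)$ in $\bbZ^6 \subset \bbR^6$. Since both circles are connected and live inside a torus, the generated subgroup $T^2$ is determined by the $\bbR$-span of $v_1$ and $v_2$. Permuting the two generators (this is (1)), rescaling each by a nonzero rational (this is (2)), and applying an element of $GL_2(\bbZ)$ (this is (5)) all preserve this span, and hence leave the subgroup $T^2 \subset SU(3)\times SU(3)$ itself unchanged.

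For (3), adding a constant $c$ to each entry of $a$ and of $b$ replaces $v_1$ by $v_1 + c\,(1,1,1,1,1,1)$, and similarly $v_2$ is shifted by $d\,(1,1,1,1,1,1)$. The extra direction generates the central scalar circle $\{(z\cdot\mathrm{Id}, z\cdot\mathrm{Id}) : z \in S^1\} \subset SU(3)\times SU(3)$, whose biquotient action is trivial because $z\cdot\mathrm{Id}\cdot X \cdot (z\cdot\mathrm{Id})^{-1} = X$. Thus, although the subgroup itself may grow, the effective $T^2$-action on $SU(3)$, and therefore the orbit space, is unchanged.

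Case (4) is the most delicate, and I expect it to be the only place requiring nontrivial bookkeeping. My plan is to construct an equivariant self-diffeomorphism $\Phi(X) = P X Q^{-1}$ of $SU(3)$, where $P, Q \in SU(3)$ are signed permutation matrices realizing $\sigma$ and $\tau$ respectively. Conjugating a diagonal matrix by a signed permutation matrix simply permutes its entries (the sign factors cancel because they commute with diagonals), so a direct calculation shows that $\Phi$ intertwines the $(a,b),(p,q)$-action with the $(\sigma(a),\tau(b)),(\sigma(p),\tau(q))$-action and descends to an isomorphism of biquotients. The only subtle point is ensuring $\det P = \det Q = +1$: for an odd permutation one multiplies the underlying permutation matrix by $\diag(-1,1,1)$, and because this sign factor commutes with every diagonal matrix it vanishes from the conjugation calculation, so the argument goes through without affecting the permutation action.
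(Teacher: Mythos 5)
Your framework --- classifying each move as (a) leaving the subgroup $T^2\subset SU(3)\times SU(3)$ unchanged, (b) extending by a direction that acts trivially, or (c) implementing an equivariant diffeomorphism of $SU(3)$ --- is sound, and your treatments of (2), (3), (4), and (5) are correct and flesh out what the paper leaves implicit (it simply cites the standard Eschenburg-space equivalences for items 1--4 and ``reparametrization of the $T^2$'' for item 5).

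However, you have misread case (1), and your argument for it is wrong. Case (1) is \emph{not} the interchange of the two generators $v_1\leftrightarrow v_2$; that move sends $(a,b,p,q)\mapsto(p,q,a,b)$, i.e.\ it yields $\calO_{a,b}^{p,q}$, which the paper records as obvious just before the proposition and which is subsumed in your case (5) with $A=\left(\begin{smallmatrix}0&1\\1&0\end{smallmatrix}\right)$. Case (1) instead swaps the left and right weights \emph{within} each circle: $v_1=(a_1,a_2,a_3,b_1,b_2,b_3)\mapsto v_1'=(b_1,b_2,b_3,a_1,a_2,a_3)$ and likewise for $v_2$, i.e.\ it interchanges the first three coordinates with the last three. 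This does \emph{not} preserve the $\bbR$-span in $\bbR^6$ (take $b=q=0$ and the spans lie in complementary coordinate blocks), so the new torus $T'^2$ is a genuinely different subgroup --- it is the image of $T^2$ under the flip $(A,B)\mapsto(B,A)$ on $SU(3)\times SU(3)$. The correct argument, exactly as in the seven-dimensional Eschenburg case $E^7_{p,q}=E^7_{q,p}$, is that the inversion $X\mapsto X^{-1}$ is a diffeomorphism of $SU(3)$ intertwining the $(a,b;p,q)$-action with the $(b,a;q,p)$-action, since $(L\,X\,R^{-1})^{-1}=R\,X^{-1}L^{-1}$; it therefore descends to a diffeomorphism of quotients. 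So (1) belongs in your third category alongside (4), not your first, and it needs a different diffeomorphism than the $P\,X\,Q^{-1}$ you use there.
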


\begin{proof}
The first 4 are simply adaptations of the equivalence rules for Eschenburg spaces. The fifth one is simply a reparametrization of the $T^2$ corresponding to a change of basis.\end{proof}

Two important corollaries of this proposition will allow us to only deal with effective actions of $T^2$.

\begin{cor}\label{CorIK}
Given an action of $T^2$ on $SU(3)$ with a finite ineffective kernel, the above operations allow us to write the same quotient as $SU(3)//T^2$ with an effective action.
\end{cor}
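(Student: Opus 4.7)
The plan is to use operations~(5), (3), and~(2) from Proposition~\ref{EqAct} in succession: first align the finite ineffective kernel with the two $S^1$-factors of $T^2$, and then divide it out one factor at a time.

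I would begin by identifying the ineffective kernel $K$ as a finite subgroup of $T^2=\bbR^2/\bbZ^2$, which by Pontryagin duality corresponds to a finite-index sublattice $L^*\subset\bbZ^2$ of the character lattice. Smith normal form applied to $L^*\subset\bbZ^2$ produces a $\bbZ$-basis $(f_1,f_2)$ of $\bbZ^2$ together with positive integers $d_1\mid d_2$ such that $(d_1 f_1, d_2 f_2)$ is a basis of $L^*$. With respect to the dual basis of $T^2$, the kernel $K$ splits as $\mu_{d_1}\times\mu_{d_2}$, aligned with the two circle factors. This change of basis is realized by operation~(5) applied with the $GL_2(\bbZ)$ matrix that carries the standard basis of $\bbZ^2$ to $(f_1,f_2)$.

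After this reparameterization, $\mu_{d_1}\subset S^1_{a,b}$ acts trivially on $SU(3)$. Writing out the condition $\diag(\zeta^{a_i})X\diag(\zeta^{-b_i})=X$ for $\zeta$ a primitive $d_1$-th root of unity and arbitrary $X\in SU(3)$ forces $\diag(\zeta^{a_i})$ to be a scalar matrix equal to $\diag(\zeta^{b_i})$, so $d_1$ divides every pairwise difference among $\{a_1,a_2,a_3,b_1,b_2,b_3\}$. Apply operation~(3) with $c=-a_1$ and $d=0$ to replace each $a_i,b_i$ by $a_i-a_1,b_i-a_1$; the new entries are all divisible by $d_1$. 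Operation~(2) with $\lambda=1/d_1$, $\mu=1$ then yields integer weights, and corresponds exactly to passing to the effective quotient of the first $S^1$. Repeating the same two steps on the second factor using $d_2$ completes the reduction and leaves an effective $T^2$ action on $SU(3)$ giving the same biquotient.

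The step that requires genuine care is the first one: one must verify that an arbitrary finite subgroup of $T^2$ splits as a product $\mu_{d_1}\times\mu_{d_2}$ in a basis attainable by a $GL_2(\bbZ)$ transformation on the $(a,p),(b,q)$ weight vectors, i.e.\ precisely via operation~(5). The remaining steps are bookkeeping, resting on the fact that operations~(2), (3), (5) each preserve the biquotient by Proposition~\ref{EqAct}.
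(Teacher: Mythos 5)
Your argument is correct and arrives at the same mechanism as the paper's proof: realign the ineffective kernel with the two coordinate circles of $T^2$ via operation (5), deduce that the relevant weights are congruent modulo the order of the trivially-acting subcircle, then apply operation (3) to shift those weights to multiples of that order and operation (2) to divide it out. The difference lies in how the alignment step is organized. The paper proceeds one element of the kernel at a time: it picks $(z_0,w_0)$ of order $n$, replaces it with a generator whose exponents $(k',l')$ are coprime, solves a Bezout identity $\alpha l'-\beta k'=1$ to build the $GL_2(\bbZ)$ matrix sending that element into the first circle factor, kills the corresponding cyclic piece, and then repeats with whatever ineffective kernel remains. Your Smith normal form argument on the annihilator sublattice $L^*\subset\bbZ^2$ splits the entire kernel as $\mu_{d_1}\times\mu_{d_2}$ with $d_1\mid d_2$ in a single reparametrization, which is more systematic, makes the invariant factors explicit, and replaces the paper's implicit induction with a single pass over the two circle factors. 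The downstream divisibility and scaling steps then match the paper's reasoning essentially verbatim. One small point worth being aware of (it does not affect correctness): operation (5) acts on the weight vectors, which is dual to reparametrizing $T^2$, so the $GL_2(\bbZ)$ matrix you hand to operation (5) is the transpose (or inverse transpose, depending on convention) of your basis-change matrix on the character lattice; since $GL_2(\bbZ)$ is closed under these, the argument goes through.
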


\begin{proof}
Let $(z_0,w_0)$ be an element of the ineffective kernel of order $n$. We can choose integers $k,l$ such that $0\leq k,l<n$, $\gcd(k,l,n)=1$, and
\[
z_0=e^{2\pi i k/n},\qquad\qquad w_0 = e^{2\pi i l/n}.
\]
We now consider a different generator. Let $s$ be such that $(k,l)s \equiv 1\pmod{n}$. Consider $(z_1,w_1)=(z_0^s,w_0^s)$, since $(n,s)=1$, we must have $(z_1,w_1)$ and $(z_0,w_0)$ generate the same subgroup, but furthermore, we have
\[
z_1=e^{2\pi i k'/n},\qquad\qquad w_1=e^{2\pi i l'/n},
\]
where $k'=k/(k,l),l'=l/(k,l)$, and so $(k',l')=1$.

Next let $\alpha,\beta$ be integers such that $\alpha l' - \beta k' = 1$. Then, we apply transformation 5 above with
\[
A = \begin{pmatrix}
l' & k'\\
\beta & \alpha
\end{pmatrix}.
\]
Under this transformation, $(z_1,w_1)$ gets changed to $(u_1,v_1)$, where $v_1=1$, and $u_1=e^{2\pi i/n}$. Let the action by $(u,v)$ be denoted as
\[
(u,v)\cdot X = u^{p'}v{a'}X\bar{u}^{q'}\bar{v}^{b'}.
\]
Then, the fact that $(e^{2\pi i/n},1)$ is in the ineffective kernel, means that $p_1\equiv p_2\equiv p_3\equiv q_1\equiv q_2\equiv q_3\pmod{n}$. Apply transformation 3 above with $c=0$ and $d=-p_1$, to get that $p_i\equiv q_i \equiv 0\pmod{n}$. Next applying transformation 2, with $\lambda=1/n$, we kill off this generator of the ineffective kernel.

Repeating this process for all generators of the ineffective kernel guarantees that the action is effective.
\end{proof}

For the next corollary, we consider a special subfamily of the seven dimensional Eschenburg spaces: $E_d^7 = E^7_{(1,1,d),(0,0,d+2)}$ for $d\geq 0$ is a family of cohomogeneity one manifolds, which means that there exists a group $G$ which acts on $E_d^7$ by isometries with $\dim E_d^7/G = 1$. In this case, $G=SO(3)SU(2)$ (see \cite{GWZCoho}). For $d>0$, Eschenburg's construction gives us a metric of positive curvature on $E_d^7$.

In general, the approach in Corollary \ref{CorIK} makes no guarantees that the effective $T^2$ shares a generating circle with the initial $T^2$ action. The following corollary addresses this shortcoming for the particular case when $SU(3)//S^1_{p,q} = E_d^7$.

\begin{cor}\label{CorEAC1}
If $\calO_{p,q}^{a,b} = E_d^7//S^1_{a,b} = SU(3)//T^2$ has ineffective torus action, then we can rewrite it as $E_{d}^7//S^1_{a',b'}$ with an effective torus action.
\end{cor}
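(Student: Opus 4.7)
The strategy is to mimic the algorithm in Corollary \ref{CorIK} but to restrict the $GL_2(\bbZ)$ reparametrization (transformation 5 of Proposition \ref{EqAct}) so that the circle $S^1_{p,q}$ is left untouched. The first thing I would verify is that $S^1_{p,q}=S^1_{(1,1,d),(0,0,d+2)}$ acts freely on $SU(3)$: this is immediate from checking the six conditions $\gcd(p_1-q_{\sigma(1)},p_2-q_{\sigma(2)})=1$, $\sigma\in S_3$. Consequently the ineffective kernel $K\subset T^2=S^1_{a,b}\times S^1_{p,q}$ satisfies $K\cap S^1_{p,q}=\{1\}$, so the natural map $K\hookrightarrow T^2/S^1_{p,q}\cong S^1$ is injective and $K$ is cyclic of some order $n$.

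Now write a generator of $K$ as $(z_0,w_0)=(e^{2\pi ik/n},e^{2\pi il/n})$ with $\gcd(k,l,n)=1$, where the first coordinate refers to $S^1_{a,b}$ and the second to $S^1_{p,q}$. Triviality of $K\cap S^1_{p,q}$ is equivalent to $\gcd(k,n)=1$, so we can pick $s$ with $sk\equiv 1\pmod n$ and replace the generator by its $s$-th power to assume it has the form $(e^{2\pi i/n},e^{2\pi il'/n})$ for some $l'\in\bbZ$. I would then apply transformation 5 with $A=\begin{pmatrix}1&l'\\0&1\end{pmatrix}\in GL_2(\bbZ)$: its bottom row guarantees $p'=p$ and $q'=q$, so $S^1_{p,q}$ is preserved, while a direct computation shows that the generator of $K$ becomes $(e^{2\pi i/n},1)$ in the new $T^2$ coordinates. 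In terms of integer data, the new first circle is $S^1_{a+l'p,\,b+l'q}$.

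At this stage $K$ lies inside the new first circle. Writing out that $(e^{2\pi i/n},1)$ acts as the identity on $SU(3)$ forces the diagonal matrices with entries $e^{2\pi ia'_j/n}$ and $e^{2\pi ib'_j/n}$ (for $j=1,2,3$) to equal a common scalar $\zeta I$, and hence there exists $c\in\bbZ$ with $a'_j\equiv b'_j\equiv c\pmod n$ for all $j$. I would then apply transformation 3 to the first circle with shift $-c$ (i.e.\ replace each $a'_j$ by $a'_j-c$ and each $b'_j$ by $b'_j-c$, leaving $p,q$ untouched) to make every $a'_j,b'_j$ divisible by $n$, followed by transformation 2 with $(\lambda,\mu)=(1/n,1)$ to scale the first circle down by $n$, absorbing the $\bbZ_n$-ineffective kernel. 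Since none of the steps changes $S^1_{p,q}$, the resulting integer data $(a',b')$ give an effective $T^2$ action on $SU(3)$ with the same orbit space, exhibiting $\calO^{a,b}_{p,q}$ as $E_d^7//S^1_{a',b'}$.

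The principal obstacle is precisely the constraint of keeping $S^1_{p,q}$ fixed. The general $GL_2(\bbZ)$ maneuver used in Corollary \ref{CorIK} can rotate both factors of $T^2$, and the restricted upper-triangular shears available here cannot in general move an arbitrary cyclic subgroup of $T^2$ into a coordinate circle. What rescues the argument is the coprimality $\gcd(k,n)=1$ provided by freeness of $S^1_{p,q}$: it lets us replace the generator of $K$ by a power whose $S^1_{a,b}$-component is already a primitive $n$-th root of unity, so only the $S^1_{p,q}$-component has to be cleared, and that is exactly what a shear of the first circle against the second accomplishes.
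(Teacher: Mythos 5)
Your proof is correct. It uses the same key idea as the paper's argument: restrict to upper-triangular shears $A=\begin{pmatrix}1&*\\0&1\end{pmatrix}$ in $GL_2(\bbZ)$, which by construction leave $S^1_{p,q}$ untouched, use such a shear to push the ineffective kernel entirely into the $S^1_{a,b}$-factor, and then clear it with a shift (transformation 3) followed by a rescaling (transformation 2). Where the two arguments differ is in how the shear parameter is found. The paper writes $a=(\alpha,\beta,0)$, $b=(\gamma,\delta,\epsilon)$, states the explicit formula $k=\gcd(\gamma-\delta,\alpha-\beta,\alpha d-\gamma(d-1))$ for the order of the kernel, chooses $r\equiv\gamma-\alpha\pmod k$ as the shear amount, and then verifies by hand that afterwards $k$ divides every exponent (the slightly delicate step being $k\mid a''_1 d-b''_1(d-1)$ forcing $k\mid b''_1$). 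You instead use the freeness of $S^1_{p,q}$ directly: since $K\cap S^1_{p,q}=\{1\}$, $K$ injects into $T^2/S^1_{p,q}\cong S^1$, hence is cyclic, and its generator can be normalized to have primitive first component $e^{2\pi i/n}$ via the coprimality $\gcd(k,n)=1$; the shear amount $l'$ then falls out of the second component of the normalized generator, and the congruences $a'_j\equiv b'_j\equiv c\pmod n$ are forced by the centrality of the diagonal matrices. Your route is a bit more conceptual and makes transparent exactly where freeness of $S^1_{p,q}$ enters, while the paper's is more computational and self-contained via the explicit gcd; the underlying maneuver is the same.
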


\begin{proof}
Let $S^1_{a,b}$ act by $w^{\alpha,\beta,0}$ on the left and $\bar{w}^{\gamma,\delta,\epsilon}$ on the right. Since $\epsilon$ is uniquely determined by the other 4 indecies, we will mostly ignore it.

The innefective kernel has order given by $k = \gcd(\gamma-\delta,\alpha-\beta, \alpha d - \gamma(d-1))$.

In particular, $\alpha\equiv\beta\pmod{k}$ and $\gamma\equiv\delta\pmod{k}$. Our goal now is to make $k|\alpha$ and $k|\gamma$, this would mean that $k$ divides all exponents in the action of $S^1_{a,b}$, and therefore, has ineffective kernel $\bbZ_k$, which we can get rid of.

Let $r \equiv \gamma-\alpha\pmod{k}$. Take $A =\begin{pmatrix}
1&r\\
0&1
\end{pmatrix}\in GL_2(\bbZ)$.

Apply rule 5 from Proposition \ref{EqAct}, and we get $E_d^7//S^1_{a,b} = E_d^7//S^1_{a',b'}$, where $a'=(\alpha+r,\beta+r,rd), b'=(\gamma,\delta,\epsilon)$. Applying rule 3 we get $a''=(\alpha+r(1-d),\beta+r(1-d),0), b''=(\gamma-rd,\delta-rd,\epsilon)$.

Observe that $a''_1-b''_1 = \alpha-\gamma + r \equiv 0 \pmod{k}$. Since this is just a reparametrization of the torus, it still has the same ineffective kernel, so $k | a''_1 d - b''_1(d-1)$, but $a''_1 d - b''_1(d-1) = (a''_1-b''_1)d + b''_1 \equiv b''_1 \pmod{k}$. Therefore, $k|b''_1$, and so $k$ must also divide $a''_i,b''_i$ for all $i$. So, we divide $a'',b''$ by $k$, and get rid of the ineffective kernel.
\end{proof}

\subsection{Orbifold Groups at $\calC_\sigma$ and $\calL_{ij}$}

We assume from now on that the action of $T^2$ on $SU(3)$ is effective. The following lemma is essential to understanding the orbifold group at points in $\calL_{ij}$ in terms of the orbifold groups on the $\calC_{\sigma}$'s it connects.

\begin{lem}\label{LemLC}
Let $\calL_{ij}$ connect $\calC_{\sigma}$ and $\calC_{\tau}$, then $(z,w)$ acts trivially on $U(2)_{ij}$ iff $(z,w)$ acts trivially on $T^2_\sigma$ and $T^2_\tau$.
\end{lem}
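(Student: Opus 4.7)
The forward implication is immediate: by choosing representatives within $T^2$-orbits one may arrange $T^2_\sigma, T^2_\tau \subset U(2)_{ij}$ (this is exactly what it means for $\calC_\sigma, \calC_\tau$ to lie in $\calL_{ij}$), so a trivial action on $U(2)_{ij}$ is automatically trivial on each torus. For the converse the plan is a direct computation using the explicit diagonal form of the action $X \mapsto L(z,w)\, X\, R(z,w)^{-1}$ with $L, R$ in the standard maximal torus of $SU(3)$; write $L=\mathrm{diag}(l_1,l_2,l_3)$ and $R=\mathrm{diag}(r_1,r_2,r_3)$.

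First I would conjugate $L$ by $\tau_i$ and $R$ by $\tau_j$, which merely permutes the diagonal entries of $L, R$, so that WLOG $U(2)_{ij}$ is the standard block-diagonal subgroup $\{\mathrm{diag}(A, \overline{\det A}) : A \in U(2)\}$. In this normal form, the triviality condition $LX = XR$ for all $X \in U(2)_{ij}$ becomes $\mathrm{diag}(l_1, l_2)\,A = A\,\mathrm{diag}(r_1, r_2)$ for every $A \in U(2)$ together with $l_3 = r_3$; taking $A = I$ and then varying $A$ reduces this to $l_1 = l_2 = r_1 = r_2$ and $l_3 = r_3$.

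Inside this standard $U(2)_{ij}$ exactly two conjugacy classes of maximal tori of $SU(3)$ appear: the diagonal torus $\{\mathrm{diag}(z, w, \overline{zw})\}$ and an ``anti-diagonal'' torus whose $U(2)$-block has vanishing diagonal. Up to the sign factor $\sgn(\sigma)$, these are precisely the $T^2_\sigma$ and $T^2_\tau$ indicated by the incidence in Figure \ref{FigSL}. Triviality on the diagonal torus immediately forces $l_i = r_i$ for $i = 1, 2, 3$; triviality on the anti-diagonal torus, computed identically, forces the permuted equalities $l_1 = r_2$, $l_2 = r_1$, and $l_3 = r_3$. Combining the two yields $l_1 = l_2 = r_1 = r_2$ and $l_3 = r_3$, exactly the triviality condition on $U(2)_{ij}$. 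The only real obstacle is the combinatorial bookkeeping: one must verify, for each of the nine edges of Figure \ref{FigSL}, that the pair $(T^2_\sigma, T^2_\tau)$ associated to $\calL_{ij}$ does realize the diagonal/anti-diagonal pair inside $U(2)_{ij}$ after the conjugation by $\tau_i, \tau_j$, and that the signs $\sgn(\sigma)$ (which appear uniformly on both sides of each equation $LX = XR$) cancel out.
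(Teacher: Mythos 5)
Your proof is correct and follows essentially the same route as the paper: both normalize to the standard block $U(2)_{33}$ with $\sigma=\mathrm{Id}$, $\tau=(12)$, use the diagonal torus $T^2_{\mathrm{Id}}$ to force $L=R$, and use the anti-diagonal torus $T^2_{(12)}$ to force the remaining equality $u_1=u_2$, after which the residual action is conjugation by a central element of $U(2)_{33}$. The only difference is organizational: the paper extracts the constraints from the two single elements $I\in T^2_{\mathrm{Id}}$ and $A_{(12)}\in T^2_{(12)}$, while you first characterize triviality on $U(2)_{33}$ as $l_1=l_2=r_1=r_2$, $l_3=r_3$ and then check it is implied by the two torus conditions; this is the same computation.
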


\begin{proof}
One direction is trivial. If $(z,w)$ acts trivially on $U(2)_{ij}$, then it acts trivially on every subset, in particular the two torii.

By conjugation, we may assume without loss of generality that $\sigma=Id, \tau=(12)$, so $\calL_{ij} = \calL_{33}$, and $U(2)_{ij}$ is the standard embedding of $U(2)$ into $SU(3)$. Now assume that $(z,w)$ acts trivially on $T^2_{Id}$ and $T^2_{(12)}$. Since $I\in T^2_{Id}$, and $(z,w)\star X = \diag(u_1,u_2,u_3)\cdot X\cdot \diag(v_1,v_2,v_3)^{-1}$, if follows that $v_i=u_i$. Now observe that $A_{(12)} = \begin{pmatrix}
0 & -1 & 0\\
-1 & 0 & 0\\
0 & 0 & -1
\end{pmatrix}\in T^2_{(12)}$. Hence $(z,w)\star A_{(12)} = A_{(12)}$, implies that $u_1\bar{u_2}=1$, so $u_1=u_2$. Therefore, the action of $(z,w)$ becomes $(z,w)\star X =  \diag(u_1,u_1,u_3)X\diag(u_1,u_1,u_3)^{-1}$. Thus, $(z,w)$ fixes all of $U(2)_{33}$ as well.
\end{proof}

The following results all assume that the action of $S^1$ or $T^2$ is effective.

\begin{thm}\label{ThmOG7}
Let $\Gamma_\sigma^{p,q}$ denote the orbifold group of $E_{p,q}^7$ along $C_\sigma$. Then $\Gamma_\sigma^{p,q}$ is a cyclic group of order $\gcd(p_1-q_{\sigma(1)},p_2-q_{\sigma(2)})$.
\end{thm}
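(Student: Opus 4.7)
My plan is to apply Lemma \ref{LemOrbiBi} directly with $U = S^1_{p,q}$ and a chosen representative $g \in T^2_\sigma$, then reduce the stabilizer condition to a system of scalar equations in $z \in S^1$.

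Fix $g = \sgn(\sigma)\,\sigma^{-1} D \in T^2_\sigma$, where $D = \diag(a,b,\overline{ab}) \in SU(3)$ and $\sigma^{-1}$ is interpreted as the corresponding permutation matrix. By Lemma \ref{LemOrbiBi}, $\Gamma_\sigma^{p,q}$ is the set of $z \in S^1$ for which
\[
\diag(z^{p_1},z^{p_2},z^{p_3})\, g \;=\; g\,\diag(z^{q_1},z^{q_2},z^{q_3}).
\]
The key computational step is to commute the left-hand diagonal past $\sigma^{-1}$ via the standard identity $\diag(\lambda_i)\,P_\tau = P_\tau\,\diag(\lambda_{\tau(i)})$, applied with $\tau = \sigma^{-1}$. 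After canceling the sign, the permutation factor, and (since $D$ is diagonal with no zero entries) the factor $D$ itself, the stabilizer condition collapses to $z^{p_{\sigma^{-1}(i)}} = z^{q_i}$ for $i = 1,2,3$, or equivalently, upon reindexing $k = \sigma^{-1}(i)$,
\[
z^{p_k - q_{\sigma(k)}} = 1 \qquad \text{for } k = 1,2,3.
\]

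The second step is to translate this system into a gcd. The solutions in $S^1$ to finitely many equations $z^{n_j} = 1$ form the cyclic subgroup of order $\gcd(n_j)$. The hypothesis $\sum p_i = \sum q_i$ gives $\sum_k (p_k - q_{\sigma(k)}) = 0$, so the third exponent is in the $\bbZ$-span of the first two; hence $\gcd(p_1 - q_{\sigma(1)},\, p_2 - q_{\sigma(2)},\, p_3 - q_{\sigma(3)}) = \gcd(p_1 - q_{\sigma(1)},\, p_2 - q_{\sigma(2)})$. Finiteness of this gcd is precisely the orbifold hypothesis that $p$ is not a permutation of $q$, so the answer is a well-defined finite cyclic group of the claimed order.

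Since $D$ dropped out of the computation, the orbifold group is independent of the chosen representative along $T^2_\sigma$, which confirms that $\Gamma_\sigma^{p,q}$ is constant along the whole circle $\calC_\sigma$. There is no real geometric obstacle here; the only thing to be careful about is bookkeeping with the permutation conventions (ensuring the subscript shifts by $\sigma$ rather than $\sigma^{-1}$ at the end), and otherwise the result falls out of one direct application of Lemma \ref{LemOrbiBi}.
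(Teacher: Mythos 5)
Your proposal is correct and follows essentially the same route as the paper's proof, just more explicitly: the paper asserts directly that $z$ fixes $T^2_\sigma$ iff $z^{p_i - q_{\sigma(i)}} = 1$ for $i=1,2,3$ and then invokes the gcd, whereas you carry out the permutation-matrix bookkeeping and spell out why the $\sum p_i = \sum q_i$ relation lets you drop the third exponent. The extra detail (including the observation that the diagonal part $D$ cancels, so the orbifold group is constant along $\calC_\sigma$) is a useful supplement but not a different method.
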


\begin{proof}
Let $z\in S^1$ be an element that fixes $T_\sigma^2$, then $z^{p_i-q_{\sigma(i)}} = 1$ for $i=1,2,3$. In particular, if $r = \gcd(p_1-q_{\sigma(1)},p_2-q_{\sigma(2)})$, then $z^r=1$, and in fact, any $z$ satisfying $z^r=1$ fixes $T_\sigma^2$. Therefore, $\Gamma_\sigma^{p,q}=\bbZ_r$.
\end{proof}

\begin{thm}\label{ThmOG}
For $\calO^{a,b}_{p,q}$, the orbifold group at $\calC_\sigma$ denoted by $\Gamma_\sigma$ has order 
\[
N_\sigma = \left|(p_1-q_{\sigma(1)}) (a_2-b_{\sigma(2)}) -
(a_1-b_{\sigma(1)})(p_2-q_{\sigma(2)})\right|.
\]

Let $r_\sigma = \gcd(|\Gamma_\sigma^{a,b}|,|\Gamma_\sigma^{p,q}|)$, then $\Gamma_\sigma = \bbZ_{r_\sigma}\oplus\bbZ_{N_\sigma/r_\sigma}$. In particular, $\Gamma_\sigma$ is non-cyclic iff the orders of $\Gamma_\sigma^{a,b}$ and $\Gamma_\sigma^{p,q}$ are not relatively prime.
\end{thm}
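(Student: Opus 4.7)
The plan is to realize $\Gamma_\sigma$ as the kernel of an explicit homomorphism $T^2 \to T^2$ given by a $2\times 2$ integer matrix, and then read off both the order and the group structure from that matrix.

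First I would compute when $(z,w)\in T^2$ fixes a generic element $\sgn(\sigma)\sigma^{-1}\diag(z_0,w_0,\bar z_0\bar w_0)$ of $T^2_\sigma$. Using the identity $\sigma\diag(t_1,t_2,t_3)\sigma^{-1}=\diag(t_{\sigma^{-1}(1)},t_{\sigma^{-1}(2)},t_{\sigma^{-1}(3)})$ and the fact that the diagonal factors commute past one another, the fixing condition collapses to the system
\[
z^{P_i}w^{A_i}=1,\qquad i=1,2,3,
\]
where $P_i=p_i-q_{\sigma(i)}$ and $A_i=a_i-b_{\sigma(i)}$. The constraints $\sum p_i=\sum q_i$ and $\sum a_i=\sum b_i$ yield $P_1+P_2+P_3=A_1+A_2+A_3=0$, so the $i=3$ equation is redundant. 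Therefore $\Gamma_\sigma$ is precisely the kernel of
\[
\psi:T^2\to T^2,\qquad (z,w)\mapsto\bigl(z^{P_1}w^{A_1},\,z^{P_2}w^{A_2}\bigr),
\]
encoded by the integer matrix $M=\begin{pmatrix}P_1&A_1\\P_2&A_2\end{pmatrix}$.

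Next, viewing $T^2=\bbR^2/\bbZ^2$, the kernel of $\psi$ is $M^{-1}\bbZ^2/\bbZ^2\cong \bbZ^2/M\bbZ^2$, a finite group of order $|\det M|$ exactly when $\det M\neq 0$. The latter is equivalent to the linear independence of $p-q_\sigma$ and $a-b_\sigma$, i.e.\ the almost-freeness hypothesis already in force. This gives $|\Gamma_\sigma|=|P_1A_2-A_1P_2|=N_\sigma$. For the group structure I would then invoke the Smith normal form $M\sim\diag(d_1,d_2)$ with $d_1\mid d_2$ and $d_1d_2=N_\sigma$, so that $\Gamma_\sigma\cong\bbZ_{d_1}\oplus\bbZ_{d_2}$. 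The first elementary divisor $d_1$ is the gcd of all entries of $M$, namely
\[
d_1=\gcd(P_1,P_2,A_1,A_2)=\gcd\!\bigl(\gcd(P_1,P_2),\,\gcd(A_1,A_2)\bigr),
\]
and by Theorem \ref{ThmOG7} this equals $\gcd(|\Gamma_\sigma^{p,q}|,|\Gamma_\sigma^{a,b}|)=r_\sigma$. Hence $d_2=N_\sigma/r_\sigma$, giving $\Gamma_\sigma\cong\bbZ_{r_\sigma}\oplus\bbZ_{N_\sigma/r_\sigma}$.

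The argument is essentially a stabilizer computation fed into the Smith normal form of a $2\times 2$ integer matrix. The only non-routine step is the permutation and sign bookkeeping that reduces the stabilizer condition to the clean system $z^{P_i}w^{A_i}=1$; once that identification is made, everything else is standard structure theory of finitely generated abelian groups, together with the observation that the first elementary divisor of $M$ is exactly the gcd of its entries.
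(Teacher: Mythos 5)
Your proof is correct and gives the full result, but by a genuinely different route than the paper. The paper simply cites Proposition 3.7 of \cite{FZOrbi} for the order $N_\sigma$ and then determines the group structure indirectly: it observes that $\Gamma_\sigma\subset T^2$ is a product of at most two cyclic groups and proves that $\bbZ_n\oplus\bbZ_n\subset\Gamma_\sigma$ iff $n$ divides both $|\Gamma_\sigma^{a,b}|$ and $|\Gamma_\sigma^{p,q}|$, using a counting argument (if $\Gamma_\sigma$ contains $\bbZ_n\oplus\bbZ_n$ then it contains all $n^2$ points of order dividing $n$ in $T^2$, hence contains $(e^{2\pi i/n},1)$ and $(1,e^{2\pi i/n})$). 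From this the first invariant factor is $r_\sigma$ and the second is forced. You instead realize $\Gamma_\sigma$ as $\ker\psi$ for an explicit integer matrix $M=\begin{pmatrix}P_1&A_1\\P_2&A_2\end{pmatrix}$, derive $|\Gamma_\sigma|=|\det M|$ from scratch, and read off the full invariant factor decomposition from the Smith normal form, using the standard fact that the first elementary divisor is the gcd of all entries of $M$, which by Theorem \ref{ThmOG7} is exactly $r_\sigma$. Your approach has the advantage of being self-contained (it does not outsource the order computation to \cite{FZOrbi}) and of proving the order and the group structure in one pass, whereas the paper's counting argument is slick but treats the two pieces separately. Both arguments are sound; yours is arguably the more transparent, since it makes the dependence on the matrix $M$ explicit.
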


\begin{rmk}
In particular, it follows that if $\gcd(|\Gamma_\sigma^{a,b}|,|\Gamma_\sigma^{p,q}|)>1$, then $\Gamma_\sigma$ is non-cyclic, and so there is no Eschenburg 7-manifold $E^7_{u,v}$ such that $\calO^{a,b}_{p,q}=E^7_{u,v}//S^1$.

For example, consider $p=(0,0,1),q=(2,4,-5),a=(0,1,1),b=(2,3,-3)$, then $\Gamma_{id} = \bbZ_2\oplus\bbZ_2$ is non-cyclic, and so $\calO^{a,b}_{p,q}$ can not be written as $E^7_{u,v}//S^1$ where $E^7_{u,v}$ is an Eschenburg manifold.
\end{rmk}

\begin{proof}
The order of $\Gamma_\sigma$ follows immediately from Proposition 3.7 in \cite{FZOrbi}. All that remains to show is the assertion about its group structure.

Since $\Gamma_\sigma\subset T^2$, we conclude that it is either cyclic or a direct sum of two cyclic groups. It is clearly non-cyclic iff it has a subgroup of the form $\bbZ_n\oplus\bbZ_n$. We will show that this occurs iff $n$ divides both $|\Gamma_\sigma^{a,b}|$ and $|\Gamma_\sigma^{p,q}|$.

If $n$ divides both $|\Gamma_\sigma^{a,b}|$ and $|\Gamma_\sigma^{p,q}|$, then clearly any element of the form $(z,w) = (e^{2k\pi i/n},e^{2l\pi i/n})$ fixes $T^2_\sigma$, and so $\bbZ_n\oplus\bbZ_n\subset\Gamma_\sigma$. Conversely, suppose that $\bbZ_n\oplus\bbZ_n\subset\Gamma_\sigma$, then we must have at least $n^2$ elements $(z,w)\in T^2$ that satisfy $(z^n,w^n)=(1,1)$. However, there are precisely $n^2$ such elements, which implies that all of them act trivially on $T^2_\sigma$. In particular, $(e^{2\pi i/n},1)$ and $(1,e^{2\pi i/n})$ fix $T^2_\sigma$, so $e^{2\pi i/n}$ is both in $\Gamma_\sigma^{a,b}$ and in $\Gamma_\sigma^{p,q}$. Therefore, $n$ divides the order of both groups. This completes the proof.
\end{proof}

\begin{cor} \label{CorOrbiC}
Let $S^1_{a,b}$ act on $E_d^7$, then the orders of the orbifold groups are given by

\begin{table}[H]
\centering
\begin{tabular}{|c|c|}
\hline
$\sigma$ & $N_\sigma$ \\\hline
$id$ & $|(\alpha-\beta)-(\gamma-\delta)|$\\\hline
$(12)$ & $|(\alpha-\beta)+(\gamma-\delta)|$\\\hline
$(13)$ & $|\gamma+d(\beta-\delta)|$\\\hline
$(123)$ & $|\gamma+d(\alpha-\delta)|$\\\hline
$(132)$ & $|\delta+d(\beta-\gamma)|$\\\hline
$(23)$ & $|\delta+d(\alpha-\gamma)|$\\\hline
\end{tabular}
\caption{$\calC_\sigma$ singularities in $E^7_d/S^1_{a,b}$}
\end{table}

Where $a=(\alpha,\beta,0)$ and $b=(\gamma,\delta,\alpha+\beta-\gamma-\delta)$.
\end{cor}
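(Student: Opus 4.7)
The plan is to obtain Corollary \ref{CorOrbiC} as a direct specialization of Theorem \ref{ThmOG}. With $p=(1,1,d)$, $q=(0,0,d+2)$, $a=(\alpha,\beta,0)$, and $b=(\gamma,\delta,\alpha+\beta-\gamma-\delta)$ (where the third entry of $b$ is forced by the condition $\sum a_i=\sum b_i$), the formula
\[
N_\sigma = \left|(p_1-q_{\sigma(1)})(a_2-b_{\sigma(2)}) - (a_1-b_{\sigma(1)})(p_2-q_{\sigma(2)})\right|
\]
reduces the problem to six algebraic verifications, one for each $\sigma\in S_3$.

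First I would tabulate the differences $p_i-q_{\sigma(i)}$ for $i=1,2$. The crucial observation is that $p_1-q_j=p_2-q_j=1$ for $j=1,2$, while $p_1-q_3=p_2-q_3=-(d+1)$. Hence for $\sigma\in\{\mathrm{id},(12)\}$ both factors $p_i-q_{\sigma(i)}$ equal $1$ and no $d$ appears, giving $N_{\mathrm{id}}=|(\beta-\delta)-(\alpha-\gamma)|$ and $N_{(12)}=|(\beta-\gamma)-(\alpha-\delta)|$ immediately, matching the first two rows of the table. For the remaining four permutations, exactly one of the two factors $p_i-q_{\sigma(i)}$ equals $-(d+1)$, and the term $a_j-b_3=-(\alpha+\beta-\gamma-\delta)+a_j$ must be simplified using the explicit form of $b_3$.

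Second, I would carry out the substitution for the four ``mixed'' permutations. For instance, for $\sigma=(13)$ one has $p_1-q_3=-(d+1)$, $p_2-q_2=1$, $a_1-b_3=-\beta+\gamma+\delta$, $a_2-b_2=\beta-\delta$, and the formula gives
\[
|-(d+1)(\beta-\delta)-(-\beta+\gamma+\delta)|=|{-d\beta+d\delta-\gamma}|=|\gamma+d(\beta-\delta)|.
\]
The cases $\sigma=(123)$, $(132)$, $(23)$ are analogous: in each case the $(d+1)$ factor distributes into a $d$-part and a $1$-part, and the $1$-part cancels neatly against the contribution coming from $b_3$, leaving an expression of the form $|\gamma+d(\cdot)|$ or $|\delta+d(\cdot)|$.

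There is no conceptual obstacle here; the only thing to watch is the bookkeeping of signs and the systematic use of $b_3=\alpha+\beta-\gamma-\delta$, since without invoking this relation the formulas do not collapse to the stated form. After the six substitutions the table is established.
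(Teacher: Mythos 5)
Your proposal is correct and follows precisely the approach the paper intends: specializing Theorem \ref{ThmOG} with $p=(1,1,d)$, $q=(0,0,d+2)$, and grinding out the six formulas (the paper simply states that the corollary follows from Theorem \ref{ThmOG}, or alternatively from Proposition 3.7 of \cite{FZOrbi}). Your organizing observation—that for $\sigma\in\{\mathrm{id},(12)\}$ both factors $p_i-q_{\sigma(i)}$ equal $1$ so $d$ drops out, while for the four mixed permutations exactly one factor equals $-(d+1)$ and the $b_3=\alpha+\beta-\gamma-\delta$ relation cancels the constant part—is exactly the right bookkeeping, and your worked example for $\sigma=(13)$ checks out.
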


This corollary follows from Theorem \ref{ThmOG}. Alternatively, the same results can be obtained from Proposition 3.7 in \cite{FZOrbi}.

\begin{thm} \label{ThmOrbiL}
Let $S^1_{a,b}$ act on $E_d^7$ with $a=(\alpha,\beta,0)$ and $b=(\gamma,\delta, \alpha+\beta-\gamma-\delta)$. Then, the orbifold groups $\Gamma_{ij}$ of $\calL_{ij}$ is cyclic with order $N_{ij}$ given by the following table:

\begin{table}[H]
\centering
\begin{tabular}{|c|c|}\hline
$i,j$ & $N_{ij}$ \\\hline
1,1 & $((\alpha-\beta)-(\gamma-\delta),\delta+d(\alpha-\gamma))$\\\hline
1,2 & $((\alpha-\beta)+(\gamma-\delta),\gamma+d(\alpha-\delta))$\\\hline
1,3 & $(\delta-\gamma, \delta+d(\beta-\gamma))$\\\hline
2,1 & $((\alpha-\beta)+(\gamma-\delta),\delta+d(\beta-\gamma))$\\\hline
2,2 & $((\alpha-\beta)-(\gamma-\delta),\gamma+d(\beta-\delta))$\\\hline
2,3 & $(\delta-\gamma, \delta+d(\alpha-\gamma))$\\\hline
3,1 & $(\alpha-\beta, \gamma+d(\alpha-\delta))$\\\hline
3,2 & $(\alpha-\beta, \delta+d(\alpha-\gamma))$\\\hline
3,3 & $(\alpha-\beta,\gamma-\delta)$\\\hline
\end{tabular}
\caption{$\calL_{ij}$ singularities in $E^7_d/S^1_{a,b}$}
\end{table}
\end{thm}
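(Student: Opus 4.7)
The plan is the following. By Lemma \ref{LemLC}, the orbifold group $\Gamma_{ij}$ at a generic point of $\calL_{ij}$ equals the subgroup of $T^2 = S^1_{a,b}\times S^1_{p,q}$ that fixes both $T^2_\sigma$ and $T^2_\tau$ pointwise, where $\calL_{ij}$ connects $\calC_\sigma$ and $\calC_\tau$ in Figure \ref{FigSL}. Writing $T^2_\sigma = \sgn(\sigma)\sigma^{-1}\diag(\xi,\eta,\overline{\xi\eta})$ and computing $(z,w)\star T^2_\sigma$ diagonal entry by diagonal entry, one finds that $(z,w)$ fixes $T^2_\sigma$ pointwise if and only if
\[
z^{\,p_r-q_{\sigma(r)}}\,w^{\,a_r-b_{\sigma(r)}} \;=\; 1 \qquad (r=1,2,3).
\]
Substituting $p=(1,1,d)$, $q=(0,0,d+2)$, $a=(\alpha,\beta,0)$, $b=(\gamma,\delta,\alpha+\beta-\gamma-\delta)$ yields, for each of the nine lens spaces, a system of six equations in $(z,w)$ to be solved.

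The general mechanism is uniform across the nine cases. Since $p_1 = p_2 = 1$, each permutation contributes an equation of the shape $z\cdot w^{k} = 1$, which pins down $z$ as a monomial in $w$; substituting back into the remaining equations collapses the system to two relations $w^M = w^N = 1$, with the $r=3$ relation (the one involving $z^{\pm d}$ or $z^{-2}$) becoming redundant once one uses the trace identity $\sum a_i = \sum b_i$. Consequently $\Gamma_{ij}$ embeds into $S^1$ via its $w$-component and is therefore cyclic of order $\gcd(M,N)$.

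As an illustration, I would carry out $\calL_{33}$ (connecting $\calC_{Id}$ and $\calC_{(12)}$) in detail: the relations from $\sigma = Id$ read $zw^{\alpha-\gamma} = zw^{\beta-\delta} = 1$ and those from $\tau = (12)$ read $zw^{\beta-\gamma} = zw^{\alpha-\delta} = 1$. Setting $z = w^{\gamma-\alpha}$ and substituting into the other three equations, the survivors (after trivial cancellation) are $w^{\alpha-\beta} = w^{\gamma-\delta} = 1$, so $\Gamma_{33}\cong \bbZ_{\gcd(\alpha-\beta,\gamma-\delta)}$, matching the table. For the eight remaining rows the computation is structurally identical; the quantities $\alpha-\beta$, $\delta-\gamma$, and $(\alpha-\beta)\pm(\gamma-\delta)$ appear from pairs of first/second-row equations, while the quantities $\gamma+d(\cdots)$ and $\delta+d(\cdots)$ arise precisely when the surviving post-elimination relation involves the third row ($p_3 = d$, $q_3 = d+2$).

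The main obstacle is purely bookkeeping rather than conceptual: one must correctly match each $\calL_{ij}$ with its pair $(\sigma,\tau)$ from Figure \ref{FigSL} and carefully track signs and indices while eliminating $z$ in all nine cases. It is also worth verifying explicitly that the $z$-elimination step really goes through in every case, which guarantees the cyclicity claim. No new geometric input beyond Lemma \ref{LemLC} and the entry-wise description of the $T^2$-action on $T^2_\sigma$ is needed.
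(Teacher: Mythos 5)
The paper's own proof of Theorem \ref{ThmOrbiL} is a single sentence: it cites part (c) of Proposition 3.7 of \cite{FZOrbi} and substitutes $p=(1,1,d)$, $q=(0,0,d+2)$. Your proposal instead rederives the table from first principles, and it is correct. Reducing via Lemma \ref{LemLC} to computing the subgroup of $T^2$ that fixes both $T^2_\sigma$ and $T^2_\tau$ pointwise is the right step; the six relations $z^{p_r-q_{\sigma(r)}}w^{a_r-b_{\sigma(r)}}=1$ are the correct ones, with the $r=3$ relation indeed redundant since $\sum p_r=\sum q_r$ and $\sum a_r=\sum b_r$ force the product of the three relations to be trivially $1$; and the observation that with $p_1=p_2=1$, $q_1=q_2=0$ every $\sigma\in S_3$ contributes at least one relation of the form $zw^k=1$ does make the $z$-elimination uniform across all nine lens spaces. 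I spot-checked rows $(3,3)$, $(1,1)$, $(1,3)$ and $(3,1)$; after elimination the surviving $w$-exponents generate exactly the sublattice of index equal to the listed gcd, though the precise table entry may appear only after a further Euclidean step depending on which equation you use to eliminate $z$ (e.g.\ for row $(3,1)$ one route gives $\gcd(\alpha-\beta,\gamma+d(\beta-\delta))$, which equals the listed $\gcd(\alpha-\beta,\gamma+d(\alpha-\delta))$). Your argument also supplies an honest reason for the cyclicity claim, since once $z$ is expressed as a monomial in $w$ the projection of $\Gamma_{ij}$ to the $w$-circle is injective; in the paper this is inherited silently from the cited reference. The tradeoff is that your route is nine blocks of elementary bookkeeping, but it is self-contained with respect to \cite{FZOrbi} and makes the source of each table entry transparent, whereas the paper's route is a one-line deferral to a formula proved elsewhere.
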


\begin{proof}
This is an immediate consequence of part (c) of Proposition 3.7 of \cite{FZOrbi}.
\end{proof}

\subsection{Corrections to Theorem C \cite{FZOrbi}}

In this section we examine Theorem C of \cite{FZOrbi} and provide both corrections and improvements to it.

\begin{thm}\label{ThmFix}
Let $E_d$ be a cohomogeneity one Eschenburg manifold, $d\geq 3$, equipped with a positively curved Eschenburg metric. Then:

\begin{enumerate}
\item[i)] If $S^1$ acts on $E_d^7$ by isometries, then there are at minimum 3 singular points, in particular, if exactly two $C_\sigma$'s are singular, then the $\calL_{ij}$ connecting them is also singular.
\end{enumerate}
In the following particular examples the singular locus of the isometric circle action $S^1_{a,b}$ on $E_d$ consists of:

\begin{enumerate}
\item[ii)] A smooth totally geodesic 2-sphere with orbifold group $\bbZ_{d+1}$ if $a=(0,-1,1)$ and $b=(0,0,0)$;

\item[iii)] When $a=(0,1,1)$and $b=(2,0,0)$, the singular locus consists of four point with orbifold groups $\bbZ_3,\bbZ_{d+1},\bbZ_{d+1},\bbZ_{2d+1}$, and the following orbifold groups on spheres:

If $3|(d+1)$, then the first 2 points are connected by a totally geodesic 2-sphere with orbifold group $\bbZ_3$.

If $3|(d-1)$, then the first and the fourth points are connected by a totally geodesic 2-sphere with orbifold group $\bbZ_3$.

If $2|(d+1)$, then the second and the third points are connected by a totally geodesic 2-sphere with orbifold group $\bbZ_2$.

\item[iv)] A smooth totally geodesic 2-sphere with orbifold group $\bbZ_{d-1}$ if $a=(0,1,1)$ and $b=(0,0,2)$.

\item[v)] Three isolated singular points with orbifold groups $\bbZ_{2d-3}, \bbZ_{d^2-d-1},\bbZ_{d^2-d-1}$ if $a=(0,d-1,0)$ and $b=(1,d-1,-1)$.
\end{enumerate}
\end{thm}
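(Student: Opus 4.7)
The plan is to reduce Theorem \ref{ThmFix} to direct computation with the formulas in Corollary \ref{CorOrbiC} and Theorem \ref{ThmOrbiL}, using Lemma \ref{LemLC} to pass between vertex singularities $\calC_\sigma$ and edge singularities $\calL_{ij}$ in the hexagon of Figure \ref{FigSL}. In particular, Lemma \ref{LemLC} gives $\Gamma_{ij}\subset\Gamma_\sigma\cap\Gamma_\tau$ for the two endpoint vertices, so a singular $\calL_{ij}$ automatically forces both endpoint $\calC_\sigma$'s to be singular.

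For parts (ii)--(v), the work is mechanical. For each listed $(a,b)$ I would first normalize to $a=(\alpha,\beta,0)$ via Proposition \ref{EqAct}(3) if necessary (only (ii) needs a shift), and then tabulate $N_\sigma$ from Corollary \ref{CorOrbiC} and $N_{ij}$ from Theorem \ref{ThmOrbiL}; a vertex $\calC_\sigma$ (resp.\ edge $\calL_{ij}$) is singular exactly when $N_\sigma>1$ (resp.\ $N_{ij}>1$), and the resulting lists should match the claims. For (ii) and (iv), the singular locus collapses to a single $\calL_{ij}$ whose orbifold group equals that at both endpoint $\calC_\sigma$'s, so no endpoint supplies extra orbifold structure and the sphere is a smooth suborbifold. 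For (iii), Corollary \ref{CorOrbiC} directly produces the four vertex orders $\bbZ_3,\bbZ_{d+1},\bbZ_{d+1},\bbZ_{2d+1}$, and the three divisibility hypotheses ($3\mid d+1$, $3\mid d-1$, $2\mid d+1$) are precisely the conditions under which the corresponding $N_{ij}$'s from Theorem \ref{ThmOrbiL} exceed $1$, giving the stated $\bbZ_3, \bbZ_3, \bbZ_2$ connecting spheres.

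Part (i) is the substantive step. Having fewer than three singular components forces at most two singular $\calC_\sigma$'s and no singular $\calL_{ij}$ (by the remark above). The ``in particular'' clause handles the subcase of exactly two singular vertices: a finite check over the six adjacency patterns of the hexagon shows that the remaining four equations $N_\tau=1$ force the two surviving orbifold orders to share a common prime factor, making the connecting $\calL_{ij}$ singular by Theorem \ref{ThmOrbiL}. To rule out the case of at most one singular vertex, I would use the congruences
\[
N_{(13)}\equiv N_{(123)}\equiv|\gamma|,\qquad N_{(132)}\equiv N_{(23)}\equiv|\delta|\pmod{d},
\]
the identity $N_{id}\cdot N_{(12)}=|(\alpha-\beta)^2-(\gamma-\delta)^2|$, and the hypothesis $d\geq 3$: requiring five of the six $N_\sigma$ to equal $1$ collapses $\alpha=\beta$ and $\gamma=\delta$, which then forces the sixth $N_\sigma$ to equal $1$ as well, contradicting that the $S^1_{a,b}$ action is a genuine (effective, non-trivial) isometric action. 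The principal obstacle is precisely this case analysis in (i): the hexagonal symmetry reduces but does not eliminate the bookkeeping, and the bound $d\geq 3$ must genuinely be used to defeat small exceptional parameter choices. Parts (ii)--(v) will reduce to routine substitutions into the existing tables.
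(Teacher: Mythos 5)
Your treatment of parts (ii)--(v) matches the paper's: normalize $(a,b)$ to the form $a=(\alpha,\beta,0)$ via Proposition~\ref{EqAct}(3) where needed, then substitute into Corollary~\ref{CorOrbiC} and Theorem~\ref{ThmOrbiL}. Your observation from Lemma~\ref{LemLC} that $\Gamma_{ij}=\Gamma_\sigma\cap\Gamma_\tau$ (so a singular edge forces both endpoints singular) is also correct and is implicitly used in the paper.

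For part (i) your route is different from the paper's, and it has gaps. The paper's decisive technical step, which you do not reproduce, is a structural lemma: if $\alpha=\beta$ (resp.\ $\gamma=\delta$), then the singular locus is a disjoint union of \emph{smooth} totally geodesic $2$-spheres, because when $\alpha=\beta$ the left factor $\diag(u,u,v)$ commutes with $U(2)_{33}$, so any $(z,w)\in\Gamma_\sigma$ fixes all of $U(2)_{33}T^2_\sigma$ and hence its other endpoint torus as well. The paper combines this with the pairing $\{N_{id},N_{(12)}\}$, $\{N_{(13)},N_{(123)}\}$, $\{N_{(23)},N_{(132)}\}$: each pair both equal to $1$ forces $\alpha=\beta$ or $\gamma=\delta$ (for the latter two pairs this uses $d\geq 3$), and having at most two singular $\calC_\sigma$'s forces, by pigeonhole, at least one of the three pairs to be trivial. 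Your proposed ``finite check over six adjacency patterns'' is asserted but never carried out, and you would additionally have to rule out the non-adjacent configurations --- the six same-parity pairs of $\calC_\sigma$'s --- since your phrase ``the connecting $\calL_{ij}$'' tacitly assumes adjacency, which does not hold a priori. In the at-most-one-singular-vertex subcase, your conclusion is misstated: $\alpha=\beta$ together with $\gamma=\delta$ forces $N_{id}=0$, not $N_{id}=1$; the contradiction is that the $T^2$-action fails to be almost free (the vectors $p-q_{id}$ and $a-b_{id}$ become proportional), so $E_d^7//S^1_{a,b}$ is not an orbifold at all. Your mod-$d$ congruences are also only correct up to sign since $N_\sigma$ is an absolute value; the paper's direct estimate $|(\gamma-\delta)(d+1)|\leq 2$ together with $d\geq 3$ is sharper and is what actually closes this case.
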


\begin{proof}

Parts 2-5 are direct application of Theorem \ref{ThmOrbiL} and Corollary \ref{CorOrbiC}. Part 1 deserves a special mention:

Assume that $S^1_{a,b}$ acts on $E_d^7$ in such a way that at most two of $N_{\sigma}$'s are not 1 (i.e. at most two $\calC_\sigma$'s are singular). We start with a lemma:

\begin{lem}
If $\alpha=\beta$ or $\gamma=\delta$, then the singular locus of $E_d^7//S^1_{a,b}$ consists of smooth totally geodesic 2-spheres. Where $a=(\alpha,\beta,0)$ and $b=(\gamma,\delta,\alpha+\beta-\gamma-\delta)$.
\end{lem}

\begin{proof}
Suppose $\alpha=\beta$, and $(z,w)$ fixes $T^2_\sigma$, then look at $U(2)_{33}T^2_\sigma$. $\alpha=\beta$ implies that the matrix acting on the left is of the form $\diag(u,u,v)$, which commutes with $U(2)_{33}$. Therefore, $(z,w)$ fixes $U(2)_{33}T_\sigma^2$.

If $\gamma=\delta$, apply the same argument to $T^2_\sigma U(2)_{33}$.

If $\calL_{ij}$ is not of the form above and is singular, then so is either $U(2)_{33}U(2)_{ij}$ or $U(2)_{ij}U(2)_{33}$, which contradicts the orbifold structure of the singular locus.
\end{proof}

We now split the $N_\sigma$'s into 3 pairs using Corollary \ref{CorOrbiC}:

\begin{itemize}
\item If $N_{id}=N_{(12)}=1$, then either $\alpha=\beta$ or $\gamma=\delta$;

\item If $N_{(13)}=N_{(123)}=1$, then $\alpha=\beta$ (since $d\geq 3$);

\item If $N_{(23)}=N_{(132)}=1$, then $\alpha=\beta$ (since $d\geq 3$).
\end{itemize}

Since at most two $N_\sigma$'s are not 1, we see that at least one of the above cases must occur. Therefore, by the lemma above, we see that $E_d^7//S^1_{a,b}$ has singular locus consisting of smooth totally geodesic 2-spheres.

It remains to show that there is no free action of $S^1_{a,b}$ on $E_d^7$.

Suppose that the action is free, then $N_{(13)}=N_{(123)}=1$, so $\alpha=\beta$. Furthermore, $N_{(13)}=N_{(132)}=1$, so we get $(d+1)(\gamma-\delta)$ is either -2, 0 or 2, but $d\geq 3$, so we must have $\gamma=\delta$, which implies $N_{id}=0$, so the quotient $E_d^7//S^1_{a,b}$ is not an orbifold.
\end{proof}

As a corollary, we get

\begin{rmk}
Parts 1 and 4 of Theorem C from \cite{FZOrbi} hold, but parts 2 and 3 are false.
\end{rmk}

\subsection{Curvature}

The goal of this section is to prove the following theorem that imposes restrictions on when $SU(3)//T^2$ admits a metric of positive curvature:

\begin{thm}\label{Thm6DPosC}
Given an orbifold $\calO_{p,q}^{a,b}$ which has positive curvature induced by a Cheeger deformation along $U(2)$, there exists $E^7_{u,v}$ (either a manifold or an orbifold) such that $\calO_{p,q}^{a,b} = E^7_{u,v}//S^1$ and $E^7_{u,v}$ has positive curvature induced by Cheeger deformation along the same $U(2)$.

Equivalently, there exist $\lambda,\mu\in\bbZ$ relatively prime such that $E^7_{\lambda p + \mu a,\lambda q + \mu b}$ is positively curved.
\end{thm}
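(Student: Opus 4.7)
The plan is to leverage Eschenburg's positive-curvature criterion for $E^7_{u,v}$ (namely $v_i \notin [\min_j u_j, \max_j u_j]$ for each $i$) together with the zero-curvature characterization on $SU(3)$ under the $U(2)$-Cheeger deformation. Both $\calO_{p,q}^{a,b} = SU(3)//T^2$ and each candidate $E^7_{\lambda p + \mu a, \lambda q + \mu b} = SU(3)//S^1_{(\lambda,\mu)}$ inherit their metrics from the same Cheeger-deformed $SU(3)$, so Eschenburg's lemma applies uniformly: the zero-curvature planes in $SU(3)$ are the subspaces $\sigma = \operatorname{span}\{X,Y\}\subset\su(3)$ with $[X,Y] = [X^\fk,Y^\fk] = 0$, where $\fk = \fu(2)$. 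Using left-trivialization to identify $T_g SU(3) \cong \su(3)$, such a plane descends to a zero-curvature plane on $SU(3)//U'$ precisely when $\sigma$ is orthogonal to the vertical space $V_g^{U'}$ of the $U'$-action at $g$.

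Set $P = \diag(ip_1,ip_2,ip_3)$ and define $Q, A, B$ analogously. Then $V_g^{T^2} = \operatorname{span}\{P - \Ad(g^{-1})Q,\ A - \Ad(g^{-1})B\}$, while $V_g^{S^1_{(\lambda,\mu)}}$ is the line spanned by $\lambda(P-\Ad(g^{-1})Q) + \mu(A-\Ad(g^{-1})B)$. Positive curvature of $\calO_{p,q}^{a,b}$ is equivalent to saying that for every zero-curvature plane $\sigma$ and every $g\in SU(3)$, the pair $(f_1(\sigma,g), f_2(\sigma,g))$ of $\sigma$-projections of the two generators of $V_g^{T^2}$ does not vanish simultaneously. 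Analogously, positive curvature of $E^7_{\lambda p + \mu a, \lambda q + \mu b}$ is equivalent to $\lambda f_1(\sigma,g) + \mu f_2(\sigma,g) \neq 0$ for all $\sigma, g$.

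Given the hypothesis on $\calO$, each pair $(\sigma, g)$ with $(f_1,f_2)\neq 0$ yields a single \emph{bad direction} $[f_2(\sigma,g) : -f_1(\sigma,g)] \in \bbR\bbP^1$ which the chosen $[\lambda:\mu]$ must avoid. The core of the argument is to show that the union of all bad directions, as $(\sigma, g)$ vary, is a proper subset of $\bbR\bbP^1$. After factoring out the natural $SU(3)$ action, the family of zero-curvature planes in $SU(3)$ is parameterized by a maximal torus of $\su(3)$ together with a subplane whose $\fk$-projection remains abelian, giving a finite-dimensional real-algebraic family; the bad-direction map is then a morphism of real-algebraic varieties, and one argues via a dimension and non-vanishing count that its image has empty interior in $\bbR\bbP^1$.

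The main obstacle is giving a concrete accounting of the bad-direction map's image. A case analysis on the type of zero-curvature plane (for example, those entirely inside $\fk$ versus those transverse to $\fk$), combined with the explicit linear form of $f_1, f_2$ as functions of the weights $p, q, a, b$ and the base point $g$, should suffice: positive curvature of $\calO$ translates into concrete nonvanishing conditions on these linear forms, which in turn prevent the bad directions from sweeping out all of $\bbR\bbP^1$. Once a good direction $[\lambda_0 : \mu_0] \in \bbR\bbP^1$ is identified, openness of Eschenburg's strict inequality permits perturbing to a nearby rational $[\lambda : \mu]$, and clearing denominators yields the desired coprime integers, so that $E^7_{\lambda p + \mu a, \lambda q + \mu b}$ is positively curved with the same Cheeger-deformed metric.
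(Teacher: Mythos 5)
Your high-level framing is sound: the Cheeger-deformed metric on $SU(3)$ has zero-curvature planes characterized by $[X,Y]=[X^\fk,Y^\fk]=0$ with $\fk=\fu(2)$, these descend to the quotient iff the plane is horizontal, and for each point $g$ and zero-curvature plane $\sigma$ the pair $(f_1,f_2)$ of $\sigma$-projections of the two vertical generators fails to vanish simultaneously precisely because $\calO$ is positively curved, yielding a single ``bad direction'' in $\bbR\bbP^1$. (Minor care: the projections must be taken with respect to the deformed inner product $\langle\cdot,\cdot\rangle_\lambda$, not the bi-invariant one, but that does not affect the structure of your argument.)

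The gap is the step you call ``the core of the argument.'' You assert, via ``a dimension and non-vanishing count,'' that the image of the bad-direction map has empty interior in $\bbR\bbP^1$. This does not follow from any dimension count: the domain of your bad-direction map (pairs $(\sigma,g)$ ranging over a several-dimensional compact family) has dimension far greater than one, and a real-algebraic morphism from such a family to $\bbR\bbP^1$ can easily be surjective — think of $x\mapsto[x:1]$ from any one-parameter subfamily. Non-vanishing of $(f_1,f_2)$ only tells you the map is well-defined, not that it misses any point. In fact the whole content of the theorem is that this image \emph{is} proper, and that has to be extracted from the specific arithmetic structure of Eschenburg's positivity conditions. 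The paper does this by first proving an explicit criterion (Proposition~\ref{PropCurv}) for positive curvature of $\calO_{p,q}^{a,b}$ in terms of linear constraints on $(p,q,a,b)$, then a normal form lemma (Lemma~\ref{LemRepar}) reducing to two normalized shapes of $(p,a)$, and finally a concrete case analysis that in each situation produces an explicit integer combination $(u,v)=(\lambda p+\mu a,\lambda q+\mu b)$ satisfying Eschenburg's criterion. Without something playing the role of Proposition~\ref{PropCurv} and the case analysis, your argument produces no candidate direction; the abstract real-algebraic formulation would need substantially more input to rule out the image being all of $\bbR\bbP^1$. Your closing perturbation-to-rational step is fine once a good direction exists, but the existence itself is exactly what remains to be proved.
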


The following is an example of why this theorem is non-trivial.

\begin{ex}
Consider the Eschenburg orbifold $\calO_{p,q}^{a,b}$ given by $a = (-2,0,2), b = (-3,1,2), p = (-4,0,2)$ and $q = (-5,3,0)$. From the work of Eschenburg, it follows that deforming by $U(2)$ does not result in a metric of positive curvature on either $E^7_{p,q}$ or $E^7_{a,b}$. However, we can re-write this orbifold as $\calO^{p',q'}_{a',b'}$, where $a'=a,b'=b$ and $p'=2a-p = (0,0,2),q'=2b-q = (-1,-1,4)$, this is the same orbifold according to Proposition \ref{EqAct}. Additionally now $E^7_{p',q'} = E^7_{2}$ admits positive curvature.
\end{ex}

Before proving this theorem, we state a lemma that limits the parametrizations we need to consider:

\begin{lem}\label{LemRepar}
Let $\calO_{p,q}^{a,b}$ be a 6-dimensional Eschenburg space, then there exist a reparametrization $p',q',a',b'$ (not necessarily effective) satisfying one of the following:

\begin{itemize}
\item $p'_1=p'_2=p'_3=0$, or

\item $p'_1=p'_3=a'_1=0,p'_2=a'_2=a'_3=n$ for some $n\in\bbZ^+$.
\end{itemize}
\end{lem}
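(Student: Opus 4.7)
The plan is to use the five reparametrization rules of Proposition \ref{EqAct} to bring the pair of exponent vectors $(a, p)$ into normal form; the rules automatically transport $(b, q)$ along. I would package $a$ and $p$ as rows of a $2\times 3$ integer matrix $M$ and first use rule (3) to shift each row independently so that its third entry is zero. This reduces the essential data to the $2\times 2$ block $\tilde M = \begin{pmatrix} a_1 & a_2 \\ p_1 & p_2 \end{pmatrix}$, whose row span $L \subseteq \bbZ^2$ is the image of $\bbZ a + \bbZ p$ in the quotient lattice $\bbZ^3/\bbZ(1,1,1)$.

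Next I split on $\det \tilde M$. If $\det \tilde M = 0$, the rows of $\tilde M$ are $\bbQ$-linearly dependent, so rule (5), applied with a suitable $U \in GL_2(\bbZ)$, clears one row; after possibly composing with the swap matrix $\begin{pmatrix} 0 & 1 \\ 1 & 0 \end{pmatrix}$, the cleared row may be taken to be the $p$-row, so $p' = 0$ and the first alternative of the lemma holds. If instead $\det \tilde M \neq 0$, then $L$ has finite index in $\bbZ^2$, and by the Smith normal form for sublattices there exist a basis $\{e_1, e_2\}$ of $\bbZ^2$ and positive integers $d_1 \mid d_2$ with $L = \bbZ\, d_1 e_1 \oplus \bbZ\, d_2 e_2$. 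Since $GL_2(\bbZ)$ acts transitively on ordered bases of $L$, rule (5) lets me rearrange the rows of $\tilde M$ to be exactly $(d_1 e_1,\, d_2 e_2)$.

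Now I apply rule (2) with $\lambda = d_2/d_1 \in \bbZ^+$ to scale the first row, so the rows become $(d_2 e_1,\, d_2 e_2)$ and the row span jumps to $d_2 \bbZ e_1 + d_2 \bbZ e_2 = d_2 \bbZ^2$. Setting $n = d_2$, a further application of rule (5) replaces this ordered basis of $n\bbZ^2$ by the basis $\bigl((-n, 0),\, (0, n)\bigr)$: both are ordered bases of the same rank-$2$ free $\bbZ$-module, so the transition matrix lies in $GL_2(\bbZ)$. Restoring the third column (currently zero) and then shifting the first row by $n$ via rule (3) converts $M$ into $\begin{pmatrix} 0 & n & n \\ 0 & n & 0 \end{pmatrix}$, which is the second alternative.

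The main conceptual obstacle is that rules (4) and (5) give only left $GL_2(\bbZ)$ together with the $S_3$-subgroup of right $GL_2(\bbZ)$ on $\tilde M$, so one cannot carry out a full right-and-left Smith reduction purely at the matrix-group level. The key observation is that rule (2) supplies exactly the missing freedom: scaling the first row by $d_2/d_1$ flattens the Smith invariants $(d_1, d_2)$ to $(d_2, d_2)$, after which the row span is the highly symmetric lattice $n\bbZ^2$ and the target basis is reached by left $GL_2(\bbZ)$ alone. Rules (2) and (3) also preserve the determinantal constraints $\sum p_i = \sum q_i$ and $\sum a_i = \sum b_i$, so $(b', q')$ requires no separate bookkeeping, which is why the lemma only constrains $p'$ and $a'$.
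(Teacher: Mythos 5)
Your proof is correct and gives a cleaner account of the same underlying strategy as the paper. Both arguments clear one column of the $2\times 3$ exponent matrix via rule (3) (the paper clears column 1, you clear column 3 — equivalent under rule (4)) and then split on whether the remaining $2\times 2$ determinant vanishes. In the degenerate case, both clear a row by a $GL_2(\bbZ)$ transformation; the paper does this by an explicit Bezout computation with several subcases on which of $a_2^0, a_3^0, p_2^0, p_3^0$ vanish, whereas you simply invoke the general fact that a rank-$\leq 1$ integer $2\times 2$ matrix has a $GL_2(\bbZ)$ left-reduction with a zero row. In the nondegenerate case, the paper applies the cofactor-type integer matrix $\begin{pmatrix} p_2^0 & -a_2^0 \\ p_3^0 & -a_3^0 \end{pmatrix}$ (determinant $\Delta$, not a unit — so really a composition of rules (5) and (2)) to produce $\tilde p, \tilde a$ with disjoint supports, then scales by an lcm; you instead pass through Smith normal form $(d_1 \mid d_2)$, flatten to $(d_2, d_2)$ using rule (2), and finish with rule (5) and a rule (3) shift. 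The Smith-normal-form framing isolates exactly where the rational scaling is needed and avoids the subcase chase, which I think is a genuine improvement in exposition. One small slip in your closing remarks: after clearing the third column, the permutation freedom available from rule (4) is $S_2$ (the stabilizer of the cleared column inside $S_3$), not $S_3$, as a subgroup of right $GL_2(\bbZ)$; this does not affect the argument since you never actually use rule (4).
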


\begin{proof}
We begin with an intermediate reparametrization $p^0,q^0,a^0,b^0$ where $p_1^0=a_1^0=0$. Now consider $\Delta = a_2^0 p_3^0 - a_3^0 p_2^0$.

Suppose that $\Delta=0$, then we want to show that we have the first scenario. Additionally suppose $p_2^0\neq 0$ (if $p_2^0=0,p_3^0\neq 0$ just apply $(12)\in S_3$ to both $p$ and $a$ to get $p_2^0=0$, and if both are 0, we already have case 1).

If $a_3^0\neq 0$, then $a_2^0\neq0,p_3^0\neq 0$. This implies that $p_2^0 = t a_2^0,p_3^0=t a_3^0$ for some $t\in\bbQ\setminus\{0\}$, so there exist $m,n\in\bbZ$ relatively prime so that $m p_i^0 + n a_i^0=0$ for all $i$ ($t=m/n$). Now consider $m',n'\in\bbZ$ such that $m' m - n' n = 1$, define $p' = m p^0 + n a^0,a' = n' p^0 + m' a^0$ (analogously for $q',b'$), then $p'_i=0$ for all $i$ and we have case 1.

If $a_3^0=0$, then $a_2^0=0$ or $p_3^0=0,a_2^0\neq 0$, if the former, then let $p'=a^0,a'=p^0$ giving us case 1. In the latter case, let $m,n$ be such that $m p_2^0 + n a_2^0=0$. Now consider $m',n'\in\bbZ$ such that $m' m - n' n = 1$, define $p' = m p^0 + n a^0,a' = n' p^0 + m' a^0$ (analogously for $q',b'$), then $p'_i=0$ for all $i$ and we have case 1.

Next suppose that $\Delta \neq 0$, our goal is to show that we have the second scenario. Consider $\tilde{p} = -a_3^0 p + p_3^0 a,\tilde{a} = -a_2^0 p + p_2^0 a$ (similarly for $q,b$). Then $\tilde{p}_1=\tilde{p}_3=\tilde{a}_1=\tilde{a}_2=0$, let $n=lcm(\tilde{p_2},\tilde{a_3})$ and $k = n/\tilde{p_2},l=n/\tilde{a_3}$. Then define $p' = k \tilde{p}, a' = k\tilde{p} + l\tilde{a}$. This gives us $p' = (0,n,0),a'=(0,n,n)$.
\end{proof}

We also prove the explicit conditions in terms of $a,b,p,q$ for when the orbifold $\calO_{p,q}^{a,b}$ has positive sectional curvature:

\begin{prop}\label{PropCurv}
Let $\calO_{p,q}^{a,b}$ be as above, and the metric being one given by Cheeger deformation along $U(2)$. Then, $\calO_{p,q}^{a,b}$ is positively curved iff for each $t\in[0,1]$ and each triple $(\eta_1,\eta_2,\eta_3)$ satisfying $\eta_i\geq0$ and $\sum\eta_i=1$ we have both
\setcounter{equation}{0}
\begin{align}
(1-t)b_1 + tb_2 &\neq \sum\eta_i a_i &&OR&(1-t)q_1+tq_2&\neq\sum\eta_i p_i
\end{align}
and
\begin{align}
b_3&\neq\sum\eta_i a_i &&OR&q_3&\neq\sum \eta_i p_i
\end{align}
\end{prop}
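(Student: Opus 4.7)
The plan is to combine Eschenburg's criterion for Cheeger-flatness with a careful enumeration of horizontal commuting pairs at a point $g\in SU(3)$, parametrized through the doubly stochastic matrix $P_g = (|g_{ij}|^2)$. Throughout I will use that a $2$-plane $\sigma = \mathrm{span}\{X,Y\}$ in $\su(3)$ is flat in the Cheeger-deformed metric $\langle\cdot,\cdot\rangle_\lambda$ along $\fk = \fu(2)$ iff $[X,Y]=0$ and $[X^\fk,Y^\fk]=0$, and that, after identifying $T_gSU(3)\cong\su(3)$ by left translation, the vertical distribution is spanned by $V_1 = iA - \Ad(g)(iB)$ and $V_2 = iP - \Ad(g)(iQ)$, where $A,B,P,Q$ are the diagonal weight matrices of the $T^2$-action.

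First I will argue that any commuting pair $X,Y$ giving a flat plane can, after $\Ad$-conjugation and modulo the $\Ad(K)$-symmetry of $\langle\cdot,\cdot\rangle_\lambda$, be taken to lie in a maximal abelian subalgebra inside $\fu(2)$, since $[X^\fk,Y^\fk]=0$ forces $X^\fk,Y^\fk$ into a common maximal torus of $\fk$ and then the symmetric-pair structure $(SU(3),U(2))$ forces the ``$\fk^\perp$-parts'' to vanish as well. Two cases survive. \emph{Type~I} planes contain the central direction $X_0 = \diag(i,i,-2i)\in Z(\fu(2))$, in which case $Y$ may be any element of $\fu(2)$ independent of $X_0$ and the flat-plane obstruction reduces to horizontality of $X_0$ alone. \emph{Type~II} planes coincide with a maximal torus $\ft'\subset\fu(2)$ distinct from those containing $X_0$; such tori are parametrized by $U(2)/N_{U(2)}(\ft)\cong S^2$, and modding out by the Weyl group of $U(2)$ acting on the first two diagonal entries cuts this down to a one-parameter family naturally indexed by $t\in[0,1]$.

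The horizontality computations are then direct. For Type~I, expanding $\langle X_0, V_i\rangle_\lambda = \nu\langle X_0, V_i^\fk\rangle$ and using $\sum_k (P_g)_{kj}=1$ yields the pair of linear equations
\[
b_3 = \sum_i (P_g)_{i3}\, a_i, \qquad q_3 = \sum_i (P_g)_{i3}\, p_i,
\]
so a Type~I flat plane exists at some $g$ exactly when $(b_3,q_3)$ lies in the convex hull of $(a_1,p_1),(a_2,p_2),(a_3,p_3)$, which is the negation of condition~(2). For Type~II, right-translating $g$ by the one-parameter $U(2)$-subgroup that rotates the $12$-block replaces the pair $((b_1,q_1),(b_2,q_2))$ by its $t$-convex combination $((1-t)b_1+tb_2,(1-t)q_1+tq_2)$ in the analogous horizontality equation, producing exactly the segment condition~(1) in the convex-hull formulation.

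For the converse I will use the classical fact that every $3\times 3$ doubly stochastic matrix is orthostochastic, so any convex-combination solution obtained above can be realized by an actual $g\in SU(3)$. The main obstacle I anticipate is the Type~II step: tracking how the continuous family of maximal tori inside $\fu(2)$ translates, under the $\Ad$-action of the $U(2)$-rotation on the weight vectors $(b,q)$, into precisely the segment $[(b_1,q_1),(b_2,q_2)]$ and its intersection with the triangle $\mathrm{conv}\{(a_i,p_i)\}$, along with verifying that the Weyl symmetry reduces the parameter range to $[0,1]$ without leaving out intermediate flat-plane configurations. Checking that no planes outside Types~I and~II can be $\langle\cdot,\cdot\rangle_\lambda$-flat, using $[X,Y]=0=[X^\fk,Y^\fk]$ together with the structure of the symmetric pair $(SU(3),U(2))$, is the other technical point.
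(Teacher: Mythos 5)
Your central structural claim---that a flat plane of the Cheeger-deformed metric can, after $\Ad(U(2))$-conjugation, be taken to lie inside $\fu(2)$, and that the planes split into a Type~I containing $X_0=\diag(i,i,-2i)$ and a Type~II consisting of ``maximal tori of $\fu(2)$ not containing $X_0$''---is wrong. Since $\fu(2)=\bbR X_0\oplus\su(2)$ with $X_0$ central, \emph{every} maximal torus of $\fu(2)$ contains $X_0$, so your Type~II is empty. Worse, flat planes genuinely leave $\fu(2)$: take $Y=\diag(i,-2i,i)\in\fu(2)$ and
\[
X=\begin{pmatrix}0&0&1\\0&0&0\\-1&0&0\end{pmatrix}\in\fk^\perp.
\]
Then $[X,Y]=0$ and $[X^\fk,Y^\fk]=[0,Y]=0$, so $\mathrm{span}\{X,Y\}$ is Cheeger-flat, but it cannot be conjugated into $\fu(2)$ because $\Ad(U(2))$ preserves the splitting $\fk\oplus\fk^\perp$. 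The correct statement---which the paper takes from Eschenburg---is that a flat plane is abelian and contains either $Y_3=\diag(i,i,-2i)$ or $\Ad(k)Y_1$ with $Y_1=\diag(-2i,i,i)$, $k\in U(2)$; in the second case the plane is $\mathrm{span}\{\Ad(k)Y_1, X\}$ with $X$ generically having a nonzero $\fk^\perp$-component (as above). Your deduction ``$[X^\fk,Y^\fk]=0$ forces the $\fk^\perp$-parts to vanish'' simply fails when one of $X^\fk,Y^\fk$ is zero or when $X^\fk$ is a multiple of $Y^\fk$, which is exactly the interesting case.

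The downstream horizontality computations you sketch are nevertheless the right ones: the Type~I condition $b_3=\sum_i|g_{i3}|^2a_i$, $q_3=\sum_i|g_{i3}|^2p_i$ agrees with the paper, and your description of right-translating by $k\in U(2)$ to generate the $t$-interpolation reproduces the paper's $\Ad(k)Y_1$ calculation with $t=|\beta|^2$ and $\eta_j=|(gk)_{j1}|^2$. But you need to \emph{derive} the family $\Ad(k)Y_1$, not appeal to a nonexistent family of $\fu(2)$-tori; otherwise the Type~II computation rests on nothing. Two smaller points: the orthostochastic theorem is unnecessary (you only prescribe a single column $(\sqrt{\eta_1},\sqrt{\eta_2},\sqrt{\eta_3})$ of a unitary, always possible); and the space of maximal tori of $U(2)$ is $U(2)/N(T)\cong\RP^2$, not $S^2$---the parameter $t\in[0,1]$ actually comes from $|\beta|^2$ for $k\in U(2)$, not from a one-dimensional family of tori. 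Finally, for the ``conditions fail $\Rightarrow$ flat horizontal plane exists'' direction you should make the dimension count explicit: the centralizer of $Y_3$ (or $\Ad(k)Y_1$) has dimension $4$, the horizontal space has dimension $6$, and $\su(3)$ has dimension $8$, so their intersection has dimension at least $2$, giving the needed complementary horizontal direction inside the centralizer.
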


\begin{rmk}
We point out that which half of each condition is satisfied can in general depend on the choice of $\eta_i's$ and $t$.
\end{rmk}

\begin{proof}
This is a fairly straightforward application of Eschenburg's original results on the curvature of Eschenburg spaces. In particular, we know that $\sec\sigma=0$ iff one of the following vectors is in $\sigma$
\[
Y_3 = \begin{pmatrix}
i&&\\
&i&\\
&&-2i
\end{pmatrix} \qquad\qquad
\Ad(k)Y_1=k \begin{pmatrix}
-2i&&\\
&i&\\
&&i
\end{pmatrix}k^{-1}\qquad(k\in U(2)).
\]

Condition 2 corresponds to verifying that $Y_3$ is not horizontal, and condition 1 corresponds to verifying that $\Ad(k)Y_1$ is not horizontal.

Let $V_{a,b}(X),V_{p,q}(X)$ be the vectors tangent to the action of $S^1_{a,b}$ and $S^1_{p,q}$ respectively at the point $X\in SU(3)$. It is easy to see that $V_{a,b}(X)=X^{-1}\cdot A\cdot X - B$, where $A = \diag(a_1 i, a_2 i, a_3 i)$ and $B=\diag(b_1 i, b_2 i, b_3 i)$ (similarly for $V_{p,q}(X)$). Let
\[
X = \begin{pmatrix}
x_{11} & x_{12} & x_{13}\\
x_{21} & x_{22} & x_{23}\\
x_{31} & x_{32} & x_{33}
\end{pmatrix}
\]

To verify that $Y_3$ is not horizontal, we need only consider the diagonal entries of $V_{a,b}(X),V_{p,q}(X)$, which are of the form
\[
\left(\displaystyle\sum_{j=1}^3 |x_{j1}|^2a_j-b_1\right)i,\qquad \left(\displaystyle\sum_{j=1}^3 |x_{j2}|^2a_j-b_2\right)i,\qquad \left(\displaystyle\sum_{j=1}^3 |x_{j3}|^2a_j-b_3\right)i
\]

$Y_3$ is orthogonal to $V_{a,b}(X)$ iff $b_3=\sum|x_{j3}|^2 a_j$. Similar condition holds for $Y_3$ being orthogonal to $V_{p,q}(X)$. Therefore, $Y_3$ is not horizontal iff condition (2) holds.

We will approach the question of whether $\Ad(k)Y_1$ is horizontal differently. First observe that $\langle\Ad(X)^{-1}A-B,\Ad(k)Y_1\rangle = \langle\Ad(Xk)^{-1}A-\Ad(k)^{-1}B,Y_1\rangle$. Let
\[
Xk = \begin{pmatrix}
x_{11} & x_{12} & x_{13}\\
x_{21} & x_{22} & x_{23}\\
x_{31} & x_{32} & x_{33}
\end{pmatrix},\qquad\qquad
k = \begin{pmatrix}
c\alpha & c\beta&\\
-c\bar{\beta} & c\bar{\alpha}&\\
&&\bar{c}^2
\end{pmatrix}.
\]

Then, the diagonal entries of $\Ad(Xk)^{-1}A-\Ad(k)^{-1}B$ are
\begin{align*}
&\left[\sum_{j=1}^3 |x_{j1}|^2 a_j - \left(b_1|\alpha|^2+b_2|\beta|^2\right)\right]i,&&
\left[\sum_{j=1}^3 |x_{j2}|^2 a_j - \left(b_1|\beta|^2+b_2|\alpha|^2\right)\right]i,\\
&\left[\sum_{j=1}^3 |x_{j3}|^2 a_j - b_3\right]i
\end{align*}

Taking the inner product with $Y_1$, we get:
\[
3\left[|\alpha|^2 b_1 + |\beta|^2 b_2 - \sum_{j=1}^3 |x_{j1}|^2 a_j\right] + \sum_{j=1}^3 a_j - \sum_{j=1}^3 b_j.
\]

Letting $\eta_j=|x_{j1}|^2,t=|\beta|^2$, we get that $\Ad(k)Y$ is orthogonal to $V_{a,b}^X$ iff
\[
(1-t)b_1+tb_2 = \sum_{j=1}^3 \eta_ja_j
\]

Obtaining a similar formula for $V_{p,q}^X$, we conclude that $\Ad(k)Y$ is not horizontal iff condition (1) holds.
\end{proof}

With these results established, we prove the main result:

\begin{proof}[Proof of Theorem \ref{Thm6DPosC}]
This proof is organized according to the cases given in Lemma \ref{LemRepar}.

\textbf{Case 1:} $p_1,p_2,p_3=0$.

First suppose that $q_1,q_2$ are both positive or both negative, then $E^7_{p,q}$ has positive curvature. Therefore, we will assume, without loss of generality, that $q_1\leq 0 \leq q_2$.

Next suppose that $b_2 - b_1 = q_2 - q_1$. If $b_1 - q_1\in[\min a_i, \max a_i]$, then consider $\alpha\in[0,1], \eta_i\geq 0, \sum\eta_i = 1$ such that $b_1 - q_1 = \sum \eta_i a_i$, and $\alpha q_1 + (1-\alpha)q_2 = 0 = \sum \eta_i p_i$, then $\alpha b_1 + (1-\alpha) b_2 = \alpha q_1 + (1-\alpha) q_2 + \sum\eta_i a_i = \sum\eta_i a_i$, which violates the positivity of sectional curvature for $\calO_{p,q}^{a,b}$. Now suppose that $b_1 - q_1\not\in[\min a_i, \max a_i]$, then there exists $n\in\bbZ^+$ satisfying $q_1 + t(b_1-q_1), q_2 + t(b_2-q_2)$ both $<\min t a_i$ or both $>\max t a_i$. Consider $u = (1-t)p + ta$ and $v = (1-t)q + tb$, then $E^7_{u,v}$ has positive sectional curvature.

Now suppose that $b_2 - b_1 \neq q_2 - q_1$. Then there exist $m,n\in\bbZ$ such that $n q_1 + m(b_1-q_1) = n q_2 + m(b_2 - q_2)$. If $n q_1 + m(b_1-q_1)\not\in[\min m a_i,\max m a_i]$, then take $u = (n-m)p + m a$ and $v = (n-m)q + m b$ to get $E^7_{u,v}$ with positive curvature. Otherwise we have $n q_1 + m(b_1-q_1) \in [\min m a_i, \max m a_i]$, then let $\eta_i$ be such that $n q_1 + m (b_1 - q_1) = m\sum\eta_i a_i$ and $\alpha$ such that $\alpha q_1 + (1-\alpha)q_2 = 0 = \sum\eta_i p_i$. This implies that $m \left(\alpha b_1 + (1-\alpha) b_2\right) = m\sum\eta_i a_i$. So either $\alpha b_1 + (1-\alpha) b_2 = \sum\eta_i a_i$, in which case we don't have $\sec>0$ on $\calO_{p,q}^{a,b}$ or $m=0$ and so $q_1=q_2=0$, however, this implies $q_3 = 0$ as well, and so we have a degeneracy.

\textbf{Case 2:} $p_1 = p_3 = a_1 = 0, p_2 = a_2 = a_3 = n>0$.

\textit{Subcase 2a:} $b_1 - q_1 = b_2 - q_2 = k$.

If $k < 0$ or $k > n$, there exists $t\in\bbZ^+$ such that $q_1 + t(b_1-q_1)$ an $q_2 + t(b_2-q_2)$ are both $<0$ or $>tn$ respectively. Then take $u = (1-t)p + ta,v=(1-t)q + tb$ and we get that $E^7_{u,v}$ has $\sec>0$.

Next, without loss of generality, assume $q_1\leq q_2$. If $[q_1,q_2] \cap [0,n] = \emptyset$, then $E^7_{p,q}$ has positive curvature, otherwise let $m = \min ([q_1,q_2]\cap [0,n]) = \max\{q_1,0\}$. Suppose that $b_1 - q_1 \in [0, n-m]\subset [0,n]$, then pick $\eta_i$ such that $\eta_2 = m/n, n\eta_3 = b_1 - q_1$ and pick $\alpha$ such that $\alpha q_1 + (1-\alpha) q_2 = m$. This implies that $\sum\eta_i p_i = m = \alpha q_1 + (1-\alpha) q_2$ and $\sum \eta_i a_i = m + (b_1 - q_1) = \alpha b_1 + (1-\alpha) b_2$, so $\calO_{p,q}^{a,b}$ does not have $\sec>0$. Next observe that if $q_1\leq 0 \leq q_2$, then $m=0$ and so every possible value of $b_1-q_1$ has been handled. Finally, suppose that $q_1\in(0,n]$, then $m=q_1$, so we need only consider $b_1-q_1\in(n-m,n]$, but this implies that $n< b_1 \leq b_2$, so $E^7_{a,b}$ has positive curvature.

\textit{Subcase 2b:} Without loss of generality, $b_1 - q_1 < b_2 - q_2$. This implies that there exist $k,l\in\bbZ$ such that $k q_1 + l(b_1 - q_1) = k q_2 + l(b_2-q_2)$, $k>0$. Additionally, let $t_0 = kq_1 + l(b_1-q_1)$.

Suppose that $t_0 \not\in[\min\{k p_i + l(a_i-p_i)\},\max\{k p_i + l(a_i-p_i)\}]$. Then, let $u=(k-l)p + l a, v = (k-l)q + l b$ to get $\sec>0$ on $E^7_{u,v}$.

\begin{lem}\label{LemSlope}
Consider all the possible values $\eta_i$ such that $k q_1 + l(b_1-q_1) = \sum\eta_i[k p_i + l(a_i-p_i)]$, and let $\eta_3^m,\eta_3^M$ denote the smallest and largest values of $\eta_3$ respectively.

If $[b_1-q_1,b_2-q_2] \cap [n\eta_3^m,n\eta_3^M]\neq\emptyset$, then $\calO_{p,q}^{a,b}$ does not have $\sec>0$.
\end{lem}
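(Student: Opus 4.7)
The plan is to translate the hypothesis directly into a failure of condition (1) in Proposition \ref{PropCurv}. Using the specific parameters $p=(0,n,0)$, $a=(0,n,n)$ from Case 2 of Lemma \ref{LemRepar}, the quantities appearing in that proposition collapse to $\sum\eta_i p_i = n\eta_2$ and $\sum\eta_i a_i = n\eta_2+n\eta_3$, so the hypothesis relation becomes the single affine equation
\[
kn\eta_2 + ln\eta_3 \;=\; kq_1 + l(b_1-q_1).
\]
Since $k>0$, this expresses $\eta_2$ as an affine function of $\eta_3$, which explains why the feasible values of $\eta_3$ form an interval $[\eta_3^m,\eta_3^M]$: it is exactly the projection onto the $\eta_3$-axis of the intersection of the standard simplex with this affine line.

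Next, I will pick $x^*$ in the nonempty intersection $[b_1-q_1,b_2-q_2]\cap[n\eta_3^m,n\eta_3^M]$ and set $\eta_3^*=x^*/n$. Membership of $x^*$ in the first interval supplies a unique $t\in[0,1]$ with $x^* = (1-t)(b_1-q_1)+t(b_2-q_2)$, while membership in the second guarantees that the $\eta_2$ forced by the affine equation, together with $\eta_1 = 1-\eta_2-\eta_3^*$, yields a feasible triple whose entries are nonnegative and sum to $1$. The identity $kq_1 + l(b_1-q_1) = kq_2 + l(b_2-q_2)$ that defined $k,l$ then lets one verify by direct substitution that this $\eta_2$ equals $[(1-t)q_1+tq_2]/n$.

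Combining these observations gives $\sum\eta_i p_i = n\eta_2 = (1-t)q_1+tq_2$ and $\sum\eta_i a_i = n\eta_2+n\eta_3^* = (1-t)b_1+tb_2$ simultaneously. Both disjuncts in part (1) of Proposition \ref{PropCurv} are thus violated at this choice of $t$ and $(\eta_i)$, so $\calO_{p,q}^{a,b}$ cannot carry a positively curved metric of the given form.

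The only delicate step is the compatibility check that the $\eta_2$ coming from the affine constraint agrees with the $\eta_2$ demanded by Proposition \ref{PropCurv}; this is precisely the reason $(k,l)$ were chosen to satisfy $kq_1+l(b_1-q_1)=kq_2+l(b_2-q_2)$ at the start of Subcase 2b, so no serious obstacle arises beyond bookkeeping.
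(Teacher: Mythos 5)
Your proof is correct and follows essentially the same route as the paper's. The paper picks $\alpha\in[0,1]$ so that $\alpha(b_1-q_1)+(1-\alpha)(b_2-q_2)=n\eta_3'$ for a feasible $\eta_3'$, then checks both disjuncts of condition (1) of Proposition~\ref{PropCurv} fail; your $t$ is the paper's $1-\alpha$, and your more detailed bookkeeping (why the feasible $\eta_3$'s form an interval, the explicit verification that the forced $\eta_2$ equals $[(1-t)q_1+tq_2]/n$ via the defining relation of $k,l$) merely spells out steps the paper leaves implicit.
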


\begin{proof}
Pick $\alpha$ such $\alpha(b_1-q_1) + (1-\alpha)(b_2-q_2) = n\eta_3'$, then $\alpha k q_1 + (1-\alpha) k q_2 = \sum k\eta_i' p_i$, so $\alpha q_1 + (1-\alpha)q_2 = \sum\eta_i' p_i$. We also get $\alpha b_1 + (1-\alpha) b_2 = \sum\eta_i' a_i$, so we do not have $\sec>0$.
\end{proof}

The table below demonstrates the possible relations between $k,l,t_0$ and the corresponding $\eta_3^m,\eta_3^M$:

\begin{table}[H]
\centering
\begin{tabular}{|c|c|c|}
\hline
Relation & $n \eta_3^m$ & $n \eta_3^M$\\\hline
$0 < k < t_0/n \leq l$ & $\frac{t_0-kn}{l-k}$ & $\frac{t_0}{l}$\\\hline
$0 \leq t_0/n \leq k$, $0<l$ & $0$ & $\frac{t_0}{l}$\\\hline
$0 \leq t_0/n \leq k$, $l\leq 0$ & $0$ & $\frac{t_0-kn}{l-k}$\\\hline
$l \leq t_0/n \leq 0 < k$, $l<0$ & $\frac{t_0}{l}$ & $\frac{t_0-kn}{l-k}$\\\hline
\end{tabular}
\caption{Bounds on $\eta_3$}
\end{table}

If $b_i-q_i > t_0/l$ and $l\geq 0$, then $q_i < 0 = \min p_i$, and so $E^7_{p,q}$ has $\sec>0$.

If $b_i-q_i < t_0/l$ and $l < 0$, then $q_i < 0 = \min p_i$, and so $E^7_{p,q}$ has $\sec>0$.

If $b_i-q_i < 0$, then if we take $N>0$ sufficiently large, we get $q_i + N(b_i-q_i)<0$ for $i=1,2$, and so $E^7_{u,v}$ with $u=(1-N) p + N a, v = (1-N) q + N b$ has $\sec>0$.

If $b_i-q_i < (t_0-kn)/(l-k)$ and $l>k>0$, then $b_i > n = \max a_i$, and so $E^7_{a,b}$ has $\sec>0$.

If $b_i-q_i > (t_0-kn)/(l-k)$ and $l\leq0<k$, then $b_i > n = \max a_i$, and so $E^7_{a,b}$ has $\sec>0$.
\end{proof}

\begin{rmk}
We also note that if $\calO^{a,b}_{p,q}$ has positive sectional curvature, then there exist $a',b',p',q'$ such that $\calO^{a',b'}_{p',q'}=\calO^{a,b}_{p,q}$ and $E^7_{a',b'},E^7_{p',q'}$ both have positive sectional curvature. This is achieved by finding $p',q'$ in accordance with the theorem, and taking a circle $S^1_{a',b'}\subset T^2$ sufficiently close to $S^1_{p',q'}$.
\end{rmk}

\bibliographystyle{amsalpha}
\bibliography{References}

\end{document}